\newtheorem{theorem}{Theorem}
\newtheorem{corollary}[theorem]{Corollary}
\newtheorem{lemma}[theorem]{Lemma}
\newtheorem{proposition}[theorem]{Proposition}
\newtheorem{definition}[theorem]{Definition}
\theoremstyle{remark}
\newtheorem{remark}[theorem]{Remark}
\newtheorem{claim}[theorem]{Claim}
\numberwithin{equation}{section}
\numberwithin{theorem}{section}
\DeclareMathOperator{\IM}{Im}
\DeclareMathOperator{\GL}{GL}
\DeclareMathOperator{\Gal}{Gal}
\DeclareMathOperator{\Aut}{Aut}
\newcommand{\eps}{\varepsilon}
\newcommand{\To}{\longrightarrow}
\newcommand{\C}{\mathbb{C}}
\newcommand{\Q}{\mathbb{Q}}
\newcommand{\Z}{\mathbb{Z}}
\newcommand{\F}{\mathbb{F}}
\newcommand{\abs}[1]{\left\vert#1\right\vert}
\newcommand{\set}[1]{\left\{#1\right\}}
\newcommand{\symE}{{E^{(2)}}}
\newcommand{\symEone}{{E_1^{(2)}}}
\newcommand{\symbigE}{{\mathcal E^{(2)}}}
\newcommand{\PP}{\mathbb{P}}
\newcommand{\Dsys}{|\mathscr D|}
\DeclareMathOperator{\orth}{O}
\DeclareMathOperator{\Spec}{Spec}
\DeclareMathOperator{\AJ}{AJ}
\DeclareMathOperator{\Alb}{Alb}
\DeclareMathOperator{\Proj}{Proj}
\DeclareMathOperator{\Fr}{Fr}
\title[A surface over $\Q$ with $p_g=q=1$, $K^2=2$ and minimal Picard number]{A surface over $\Q$ with $p_g=q=1$,  $K^2=2$ and \\ minimal Picard number}
\title[Results on surfaces with $p_g=q=1$ and $K^2=2$]{Some results on surfaces with $p_g=q=1$ and $K^2=2$}
\author{Paul Lewis}
\address{Columbia University \\
Department of Mathematics \\
Room 509, MC 4406 \\
2990 Broadway \\
New York, NY 10027}
\email{pdlewis@math.columbia.edu}
\author{Christopher Lyons}
\address{California State University, Fullerton \\
Department of Mathematics \\
800 N. State College Blvd \\
Fullerton, CA 92834 }
\email{clyons@fullerton.edu}
\subjclass[2010]{14J29, 14G10, 14D05, 11G99}
\begin{document}

\maketitle

\begin{abstract}
Following an idea of Ishida, we develop polynomial equations for certain unramified double covers of surfaces with $p_g=q=1$ and $K^2=2$.  Our first main result provides an explicit surface $X$ with these invariants defined over $\Q$ that has Picard number $\rho(X)=2$, which is the smallest possible for these surfaces.  This is done by giving equations for the double cover $\tilde X$ of $X$, calculating the zeta function of the reduction of $\tilde X$ to $\F_3$, and extracting from this the zeta function of the reduction of $X$ to $\F_3$; the basic idea used in this process may be of independent interest.

Our second main result is a big monodromy theorem for a family that contains all surfaces with $p_g=q=1$, $K^2=2$, and $K$ ample.  It follows from this that a certain Hodge correspondence of Kuga and Satake, between such a surface and an abelian variety, is motivated (and hence absolute Hodge).  This allows us to deduce our third main result, which is that the Tate Conjecture in characteristic zero holds for all surfaces with $p_g=q=1$, $K^2=2$, and $K$ ample.
\end{abstract}

\section{Introduction}\label{intro}

Let $X$ be a minimal algebraic surface of general type defined over a finitely generated field $k_0$ of characteristic zero.  Surfaces having geometric genus $p_g=1$ are particularly fascinating, in part because they are related (via the Hodge structure on their middle singular cohomology group) to abelian varieties, as Kuga and Satake \cite{KS} demonstrated.  Among all $p_g=1$ surfaces, the most recognizable are those of Kodaira dimension 0, namely the K3 and abelian surfaces.  The myriad of special features possessed by these two classes of surfaces leads to a wide variety of interesting geometric and arithmetic results about them.  On the other hand, surfaces with $p_g=1$ that are \emph{of general type} lack most of these remarkable features, and much less is known about them.  Indeed, the geometric classification of these surfaces is only partial at this point, and there is a real dearth of arithmetic results in this area.

In this paper we focus on surfaces with $p_g=1$, irregularity $q=1$, and whose canonical bundle has self-intersection number $K^2=2$, with the goal of establishing some results of an arithmetic nature. We give two such results that both, loosely speaking, center around Picard numbers.  The first identifies an explicit surface defined over $\Q$ having $p_g=q=1$, $K^2=2$, and minimum possible Picard number $\rho(X)=2$ (see Theorem \ref{thmA}).  The second establishes the Tate Conjecture in characteristic zero for all surfaces with $p_g=q=1$, $K^2=2$, and $K$ ample (see Theorem \ref{thmC}), which essentially gives a Galois representation-theoretic meaning to their Picard number.

We now describe surfaces $p_g=q=1$ and $K^2=2$ in more detail, beginning with a noteworthy fact about their place in the vast landscape of surfaces of general type.  Among all \emph{irregular} surfaces of general type, one has the two inequalities
\[
K^2\geq 2\chi \geq 2.
\]
The first of these is a consequence of Noether's inequality (see \cite[Prop.\ 2.3.2]{MLP}), while the second holds for all surfaces of general type.  One may inquire about the special collection of those irregular surfaces satisfying the two equalities $K^2 = 2\chi = 2$, and these turn out to be exactly the surfaces with $p_g=q=1$ and $K^2=2$.

These surfaces were classified by Bombieri--Catanese \cite{Cat} and Horikawa \cite{Hor} (see Theorem \ref{class} for a precise statement).  In rough terms, any such surface $X$ is isomorphic to a double cover of the symmetric square $\symE$ of a particular elliptic curve $E$ (namely, $E=\Alb(X)$), with the branching divisor belonging to a specific complete linear system $\Dsys$ on $\symE$.  In a sense, this divisor $\mathscr D$ is easy to describe because $\symE$ has the structure of a $\PP^1$-bundle over $E$; however, the locally free sheaves on $E$ giving rise to this $\PP^1$-bundle are indecomposable, making it difficult to find useful equations for the elements of $\Dsys$.

Instead it turns out to be easier to obtain equations for certain unramified double covers of elements in $\Dsys$.  Indeed, by mimicking a technique of Ishida \cite{Ish} used to study surfaces with $p_g=q=1$ and $K^2=3$, we may pull back the bundle $\symE\to E$ via a $2$-isogeny of elliptic curves $\tilde E\to E$, and obtain a $\PP^1$-bundle $\tilde P\to\tilde E$; the latter bundle will then be the projectivization of a sum of two invertible sheaves of degree one.  As a result, the pullbacks of elements of $\Dsys$ to $\tilde P$ have polynomial equations that are simple enough to work with by hand or in conjunction with computer algebra packages.  Thus if $X$ is a surface with $p_g=q=1$ and $K^2=2$, one may use these equations to acquire explicit information about an unramified double cover $\tilde X$ of $X$; as the results of this paper show, this very often yields useful results about the surface $X$ itself.

For an algebraic surface $S$, let $\rho(S)$ denote its geometric Picard number.  When $X$ has $p_g=q=1$ and $K^2=2$, the canonical divisor and an Albanese fiber are numerically independent, and thus $\rho(X)\geq 2$.  Our first main result is:

\begin{theorem}\label{thmA}
For a certain explicit surface $X_1$ defined over $\Q$ with $p_g=q=1$ and $K^2=2$, we have $\rho(X_1)=2$.
\end{theorem}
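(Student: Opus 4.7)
The plan is to construct an explicit surface $X_1/\Q$ of the prescribed type and show $\rho(X_1) = 2$ by combining point counting on its reduction modulo $3$ with a Frobenius-eigenvalue upper bound. The lower bound $\rho(X_1) \geq 2$ is free, coming from the numerical independence of the canonical class and an Albanese fiber noted in the paragraph preceding the theorem, so the work lies in producing a matching upper bound over $\Q$.

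The first step is to write down $X_1$ via its unramified double cover $\tilde X_1$, exploiting the Ishida-style construction described in the introduction: pull back the $\PP^1$-bundle $\symE \to E$ along a $2$-isogeny $\tilde E \to E$, so that the resulting bundle $\tilde P \to \tilde E$ is the projectivization of a sum of two line bundles of degree one. In this coordinatized total space, the branching divisor $\mathscr D \in \Dsys$ pulls back to an explicit polynomial, and the double cover of $\tilde P$ branched along it is $\tilde X_1$. I would choose the coefficients so that $X_1$ has good reduction at $p=3$ and so that the reduction has enough arithmetic complexity to avoid degenerate Frobenius spectra.

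The second and most computational step is to determine the zeta function of $\tilde X_1$ over $\F_3$. Using the explicit equations, I would count $\tilde X_1(\F_{3^r})$ for $r$ up to roughly half of $b_2(\tilde X_1)$, then use the Lefschetz trace formula, the functional equation, and the Riemann hypothesis to pin down the characteristic polynomial of geometric Frobenius on $H^2$ of $\tilde X_1 \otimes \bar\F_3$. From this I would extract the corresponding polynomial for $X_1 \otimes \bar\F_3$ without ever writing clean equations for $X_1$ itself: the deck involution $\iota$ of $\tilde X_1 \to X_1$ splits $H^2(\tilde X_1)$ into $\iota$-invariant and anti-invariant parts, with the invariant part essentially $H^2(X_1)$, and the trace of $\iota\circ\Fr^r$ can be read off from point counts of $X_1$ (accessible as a quotient) against those of $\tilde X_1$. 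This separation trick is the ``basic idea of independent interest'' the abstract advertises.

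The third step converts the $\F_3$-zeta data into a Picard bound. Specialization gives $\rho(X_1) \leq \rho(X_1 \otimes \bar\F_3)$, and the latter is at most the multiplicity with which eigenvalues of the form $3\zeta$, for $\zeta$ a root of unity, occur in the characteristic polynomial of Frobenius on $H^2(X_1 \otimes \bar\F_3, \Q_\ell)$. If this multiplicity is exactly $2$, accounted for by the canonical class and an Albanese fiber, we conclude $\rho(X_1) = 2$. The main obstacle I expect is twofold: first, arranging the equations of $\tilde X_1$ so that $p = 3$ alone gives a sharp bound (generically the bound overshoots by an even amount, which in many van Luijk-style arguments forces the use of a second prime together with a discriminant parity argument, and avoiding this requires some luck in the coefficient choice); second, rigorously verifying that the non-trivial factor of the Frobenius polynomial on $H^2(X_1)$ has no root of the form $3\zeta$, which in practice reduces to an integrality or irreducibility check on the polynomial recovered from point counting.
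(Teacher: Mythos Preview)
Your proposal is correct and follows essentially the same approach as the paper: construct $X_1$ via its explicit double cover $\tilde X_1$, compute $Z(\bar{\tilde X}_1/\F_3,t)$ by point counting, extract $Z(\bar X_1/\F_3,t)$ from the $\iota$-invariant part of $H^2(\bar{\tilde X}_1)$, and bound $\rho$ by the number of Frobenius eigenvalues of the form $3\zeta$. Two small clarifications: rather than computing traces of $\iota\circ\Fr^r$ for many $r$, the paper factors the degree-$22$ polynomial $\tilde P_2(t)$ over $\Q$, observes that only two of its degree-$10$ factors can combine with $(1-3t)^2$ to give $P_2(\bar X_1,t)$, and then distinguishes them with a \emph{single} orbit count $\#\bar X_1(\F_3)=\#\big(\bar{\tilde X}_1(\F_9)/\iota\big)^{\Fr}$; and your parity worry is unfounded here, since $b_2(X_1)=12$ and the target $\rho=2$ have the same parity, so one prime suffices and no van Luijk-style discriminant comparison is needed.
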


\vspace{1em}

We note that the construction of the surface $X_1$ in Theorem \ref{thmA} is ``explicit'' in the sense that we have polynomial equations over $\Q$ for the double cover $\tilde X_1$ and the involution $\iota$, and $X_1$ is then described as the quotient $\tilde X_1/\iota$; see \S\ref{X1_subsec} for the data that determine $X_1$.

In the proof of Theorem \ref{thmA}, the determination of $\rho(X_1)$ is made by passing to a prime $p$ of good reduction.  More precisely, if $\bar X_1$ denotes the good reduction of (an integral model of) $X_1$ to $\F_p$, one has $\rho(X_1)\leq \rho(\bar X_1)$, and one may infer that $\rho(\bar X_1)=2$ if its zeta function $Z(\bar X_1,t)$ has a particular form.  The difficulty in this approach is that one usually obtains $Z(\bar X_1,t)$ by counting points in extensions of $\F_p$, but the lack of equations for $X_1$ makes this infeasible. We overcome this problem by instead working with the double cover $\tilde X_1$.  We note the general idea behind what is described here is simple enough that it can likely be applied in some other situations where one has a finite \'{e}tale Galois cover of the variety of interest.  For ease of notation, let $Y_1:=\tilde X_1$, and let $\bar X_1, \bar Y_1$ denote the reductions of $X_1,Y_1$ to $\F_3$.  By implementing the equations for $Y_1$ in \textsc{Magma} \cite{Magma}, we count points on $\bar Y_1$ to determine $Z(\bar Y_1,t)$. Then geometric relations between $X_1$ and $Y_1$ imply that the zeta functions of their reductions are closely related (see Proposition \ref{zeta_relation}); in fact, based upon the factorization of the rational function $Z(\bar Y_1,t)$ over $\Q$, we are able to narrow down $Z(\bar X_1,t)$ to one of two possibilities.  On the other hand, each of these two possibilities make different predictions for the value of $\#\bar X_1(\F_3)$.  Since $\bar X_1$ is the quotient of $\bar Y_1$ by a free involution $\iota$ (see \cite[Expos\'e 5]{Groth-SGA1} for basic results about such quotients), a simple examination of the $\iota$-orbits of $\bar Y_1(\F_9)$ allows us to definitively determine $\#\bar X_1(\F_3)$ and thus $Z(\bar X_1,t)$.\footnote{We note a contrast here with the problem of finding K3 surfaces $S/\Q$ of minimal Picard number $\rho(S)=1$, as in \cite{vL} or \cite{Els-Jah}.  There one often has explicit equations for $S$, making the determination of $Z(\bar S,t)$ relatively straightforward.  However, due to a basic parity issue (specifically, the second Better number $b_2=22$ is even while the desired Picard number 1 is odd), knowledge of $Z(\bar S,t)$ alone (apparently) never suffices to conclude $\rho(S)=1$.  There is no such parity issue when finding surfaces $X$ with $p_g=q=1$ and $K^2=2$ with minimal Picard number, as here we have $b_2=12\equiv 2 \pmod 2$, and so the main obstacle is the determination of $Z(\bar X,t)$.}

The next main result, our proof of which uses Theorem \ref{thmA} at a key point, regards those surfaces with $p_g=q=1$ and $K^2=2$ for which $K$ is ample.  As described in \cite{Cat}, the classification theorem of Bombieri--Catanese and Horiwaka implies that all such surfaces belong to a smooth projective family $f\colon\mathcal X\to \mathcal S$ over $\Q$, where $\mathcal S$ is a rational variety of dimension 7.  The local system $R^2 f_\ast \Q(1)$ on the complex manifold $\mathcal S(\C)$ gives rise to a monodromy representation $r: \pi_1(\mathcal S(\C),s) \to \Aut\big(H^2(\mathcal X_s,\Q(1))\big)$, for a choice of base point $s$.  By the construction of the family, the image of $r$ fixes the canonical and Albanese classes in $H^2(\mathcal X_s,\Q(1))$.  Letting $\mathbb V_s$ denote the orthogonal complement of these two classes with respect to the cup product form $\psi_s$, it follows that the image of $r$ is contained inside $\orth(\mathbb V_s,\psi_s)$ (included in the obvious way inside $\orth(H^2(\mathcal X_s,\Q(1)),\psi_s)$).  The following ``big monodromy'' result states that there are no other rational algebraic restrictions on the image of $r$:

\begin{theorem}\label{thmB}
In the family $f\colon\mathcal X\to\mathcal S$, the monodromy representation
\[
r\colon \pi_1(\mathcal S(\C),s) \To \orth(\mathbb V_s,\psi_s)
\]
on the middle cohomology of a fiber has Zariski-dense image in $\orth(\mathbb V_s,\psi_s)$.
\end{theorem}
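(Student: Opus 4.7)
The plan is to establish three properties of the monodromy group $G := r(\pi_1(\mathcal{S}(\C), s)) \subseteq \orth(\mathbb{V}_s, \psi_s)$: (i) $G$ acts irreducibly on $\mathbb{V}_s$; (ii) $G$ contains a Picard--Lefschetz reflection; (iii) the Zariski closure $\bar G$ is infinite. A classical fact about algebraic subgroups of orthogonal groups---any closed subgroup of $\orth(V,\psi)$ with $\dim V\geq 3$ that acts irreducibly, contains a reflection, and is infinite must be all of $\orth(V,\psi)$---then yields the theorem.

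For (i), Theorem \ref{thmA} is the essential input. Let $s_1 \in \mathcal{S}(\C)$ be a point whose fiber is isomorphic to $X_1$. Because $\rho(X_1)=2$, the group $\mathrm{NS}(X_1)_\Q$ is spanned by the canonical and Albanese classes, so $\mathbb{V}_{s_1}$ (their orthogonal complement) coincides with the transcendental Hodge structure $T(X_1)_\Q \subset H^2(X_1,\Q)(1)$, and in particular $\mathrm{NS}(X_1)_\Q \cap \mathbb{V}_{s_1} = 0$. I claim $\mathbb{V}_{s_1}$ is irreducible as a $\Q$-Hodge structure: any sub-HS $W \subset \mathbb{V}_{s_1}$ and its $\psi_{s_1}$-orthogonal complement $W^\perp$ give a splitting $\mathbb{V}_{s_1}=W\oplus W^\perp$, and since the $(2,0)$-part of $\mathbb{V}_{s_1}$ is one-dimensional, exactly one of $W, W^\perp$ has trivial $(2,0)$-part. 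Being conjugate-closed, that summand is then of pure Hodge type $(1,1)$; by the Lefschetz $(1,1)$-theorem its nonzero rational classes would be algebraic, forcing it into $\mathrm{NS}(X_1)_\Q \cap \mathbb{V}_{s_1} = 0$. Hence $W=0$ or $W=\mathbb{V}_{s_1}$. Finally, by Deligne's semisimplicity theorem for polarized VHS, any $G$-invariant subspace of $\mathbb{V}_s$ extends to a direct summand local subsystem, whose fiber at $s_1$ would furnish a proper sub-HS of $\mathbb{V}_{s_1}$ and contradict the claim; thus $G$ acts irreducibly.

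For (ii), I would construct a nodal degeneration in the family. Fix an elliptic curve $E$ and work inside the linear system $|\mathscr D|$ on $\symE$: a Bertini-type argument produces divisors $\mathscr D_0$ with a single node and otherwise smooth, and the double cover of $\symE$ branched along $\mathscr D_0$ acquires a single ordinary double point. This degeneration sits in a one-parameter family inside $\mathcal S$, and the Picard--Lefschetz formula identifies its local monodromy with the reflection $\sigma_\delta$ in a vanishing cycle $\delta \in \mathbb{V}_s$ of self-intersection $-2$. Since $\det\sigma_\delta=-1$, this both produces the required reflection in $G$ and shows $G\not\subseteq\SO(\mathbb{V}_s,\psi_s)$.

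For (iii), the family $\mathcal X/\mathcal S$ is non-isotrivial (for instance, the Albanese elliptic curves have varying $j$-invariant), from which one deduces that the period map is non-constant and so $G$ is infinite; alternatively, by Andr\'e's theorem the connected algebraic monodromy group $\bar G^0$ is a normal subgroup of the derived generic Mumford--Tate group, which for a K3-type VHS with $h^{2,0}=1$ is $\SO(\mathbb{V}_s,\psi_s)$ itself (almost simple), so $\bar G^0$ is either trivial or $\SO$, and the reflection from (ii) rules out the trivial option. Assembling (i)--(iii) then yields Theorem \ref{thmB}. The main obstacle is step (i): Theorem \ref{thmA} is precisely what is needed to rule out extra sub-Hodge structures in a fiber, and without a concrete fiber of minimal Picard number the irreducibility of the monodromy representation would demand considerably heavier machinery.
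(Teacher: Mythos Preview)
Your overall strategy differs from the paper's. The paper does not prove irreducibility of the monodromy action directly. Instead it restricts to a general pencil $\Pi\subset|\mathscr D|$ on $E_1^{(2)}$ through $B_1$ and shows: (a) the discriminant $R\subset|\mathscr D|$ has a \emph{unique} irreducible codimension-one component (this is the content of \S\ref{sing_sec}, established by explicit computer calculation and necessitated by the fact that $|\mathscr D|$ is \emph{not} very ample, Proposition \ref{not_very_ample}); (b) hence the $32$ Picard--Lefschetz loops are all conjugate in $\pi_1(|\mathscr D|\setminus R_0)$; (c) Theorem \ref{thmA} is used only to show $\dim I=2$, i.e.\ that the vanishing cycles span $\mathbb V_s$; (d) Deligne's Lemme $4.4.2^{s}$ then gives that the image is finite or Zariski-dense in $\orth$, and the indefinite signature $(2,8)$ rules out finiteness. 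Your route instead tries to extract full irreducibility of the monodromy from Theorem \ref{thmA} via Hodge theory, which---if it worked---would bypass the computations of \S\ref{sing_sec} entirely.

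Two steps are not adequately justified. In (i), you assert that a $G$-invariant subspace yields a sub-Hodge-structure at $s_1$; but Deligne's semisimplicity only guarantees that the \emph{isotypic} decomposition of a polarized VHS is a decomposition into sub-VHS, not that every sub-local-system is a sub-VHS. If $\mathbb V$ happened to be a single isotypic block $\mathbb L\otimes H$ with $\dim_\Q H\ge 2$, a $\Q$-line in $H$ would give a $G$-stable subspace that need not be a sub-HS. One can exclude this using the constraint $h^{2,0}(\mathbb V_{s_1})=1$ (a tensor factorisation forces one factor to be of pure type $(0,0)$, and then irreducibility of $\mathbb V_{s_1}$ as a Hodge structure gives a contradiction), but this argument must be supplied. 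In (ii), ``a Bertini-type argument'' does not suffice: because $|\mathscr D|$ is not very ample, the standard dual-variety picture fails, and producing even one divisor in $|\mathscr D|$ whose singular locus is a single node requires genuine work---either an explicit example or an analysis of the kind carried out in \S\ref{sing_sec} (though you need far less than the paper, which needs the discriminant to be irreducible). Finally, in your alternative for (iii), a reflection is a finite-order element and certainly does not exclude $\bar G^{0}=\{1\}$; keep the period-map argument.
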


The proof of this theorem uses a strategy similar to that of the second author in \cite{Lyo}, by first making a study of the degenerations of the family $\mathcal X\to S$ within a slightly larger family $\mathcal X_0\to \mathcal S_0$.  Roughly speaking, we show that the singular fibers in $\mathcal X_0\to \mathcal S_0$ are parametrized by a variety $V\subseteq \mathcal S_0$ that has precisely one irreducible component of codimension one in $\mathcal S_0$, and that a general singular fiber in $\mathcal X_0\to \mathcal S_0$ is smooth outside of one ordinary double point.   This description of $\mathcal X_0\to\mathcal S_0$ is deduced from the existence of a certain kind of pencil inside the linear system $\Dsys$ on $\symEone$ (for some elliptic curve $E_1$), and we use \textsc{Singular} \cite{Singular} to find such a pencil by implementing the aforementioned equations  for double covers.  We then apply Picard-Lefschetz theory to a general one-parameter subfamily of $\mathcal X_0\to \mathcal S_0$, and it is at this point that Theorem \ref{thmA} makes its appearance.  Specifically, one would like to know that the subspace $\mathbb V_s$ can be described in terms of the vanishing cycles obtained from this subfamily, and this may be deduced from the fact that a general fiber has Picard number 2, and hence that the associated local system $\mathbb V$ on $\mathcal S(\C)$ (of which $\mathbb V_s$ is a fiber) has no monodromy invariants.

By appealing to a general result in \cite{Lyo}, Theorem \ref{thmB} is then applied to prove our third main result:

\begin{theorem}\label{thmC}
Let $X$ be a surface with $p_g=q=1$ and $K^2=2$ defined over a finitely generated field $k_0$ of characteristic zero, and let $k$ denote the algebraic closure of $k_0$. If the canonical divisor $K$ is ample, then the following hold:
\begin{enumerate}
\item The $\ell$-adic representation of  $\Gal(k/k_0)$ acting upon $H^2(X_k,\Q_\ell(1))$ is semisimple.
\item (Tate Conjecture) Any class of $H^2(X_k,\Q_\ell(1))$ fixed by an open subgroup of $\Gal(k/k_0)$ is a $\Q_\ell$-linear combination of classes arising from divisors on $X_k$.
\end{enumerate}
\end{theorem}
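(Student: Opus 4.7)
The plan is to reduce Theorem \ref{thmC} to the corresponding statements for abelian varieties (both known by Faltings) via the Kuga-Satake construction, using Theorem \ref{thmB} as the input that makes this transfer unconditional in the arithmetic setting. Since $K$ is ample, $X$ appears (possibly after replacing $k_0$ by a finite extension, which does not affect the conclusions) as a fiber of the family $f\colon \mathcal X\to\mathcal S$. The canonical class and an Albanese fiber span a Galois-stable rank-two subspace of algebraic classes in $H^2(X_k,\Q_\ell(1))$, whose orthogonal complement with respect to the cup product form I will denote $V_\ell$; this is the $\ell$-adic realization of the local system $\mathbb V$ appearing in Theorem \ref{thmB}. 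Semisimplicity and the Tate conjecture are trivial on the span of the two algebraic classes, so the content of the theorem is to establish both statements for $V_\ell$.

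Next, I would invoke the Kuga-Satake construction, which assigns to the weight-two polarized Hodge structure on $V:=V_{\Q}$ an abelian variety $A$ together with a Hodge embedding of $V$ into a subquotient of $\End\bigl(H^1(A,\Q)\bigr)$. A priori this correspondence is only rational-Hodge, so the crucial step is to upgrade it to a Galois-equivariant correspondence on $\ell$-adic realizations. Here I would appeal to the general result of the second author in \cite{Lyo}, whose hypothesis is exactly that the geometric monodromy on $\mathbb V$ is Zariski-dense in $\orth(\mathbb V_s,\psi_s)$. Theorem \ref{thmB} supplies precisely this big monodromy statement, so the criterion applies and the Kuga-Satake correspondence is motivated in the sense of Andr\'e; in particular it is absolute Hodge, and so it realizes $V_\ell$ as a $\Gal(k/k_0)$-stable sub-representation inside an $\ell$-adic endomorphism module of $H^1(A_k,\Q_\ell)$, with the isomorphism defined over a finite extension of $k_0$.

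With this Galois-equivariant realization in hand, both conclusions follow by transfer from $A$. For part (1), Faltings' semisimplicity theorem for $H^1(A_k,\Q_\ell)$ passes to tensor constructions and to Galois-stable sub-representations, so $V_\ell$ is semisimple; combined with the obvious semisimplicity on the span of the canonical and Albanese classes, this gives semisimplicity of $H^2(X_k,\Q_\ell(1))$. For part (2), any class in $V_\ell$ fixed by an open subgroup of $\Gal(k/k_0)$ transports via the Kuga-Satake correspondence to a Tate class in the endomorphism algebra of $H^1(A_k,\Q_\ell)$; Faltings' Tate conjecture identifies such a class with an actual endomorphism of $A_k$, which via the absolute-Hodge property of the correspondence pulls back to a Hodge class in $V\subset H^2(X(\C),\Q(1))$. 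The Lefschetz $(1,1)$-theorem then exhibits this Hodge class as a $\Q$-linear combination of divisor classes, proving the Tate conjecture.

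The principal obstacle is verifying that the general criterion of \cite{Lyo} applies cleanly to the present setup, i.e.\ that the Hodge-theoretic data of $\mathcal X\to\mathcal S$ (the Hodge numbers of $\mathbb V_s$, the handling of the two fixed algebraic classes, and the fact that $\mathcal S$ is defined over $\Q$ with rational base point) match the hypotheses of that criterion, and that Theorem \ref{thmB} is formulated in the precise form required. Once that compatibility is checked, the rest of the argument is a mechanical combination of the formalism of motivated cycles with Faltings' theorems.
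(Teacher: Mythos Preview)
Your proposal is correct and follows essentially the same route as the paper: verify that $X$ sits in the family $f\colon\mathcal X\to\mathcal S$ with the two global divisor classes, invoke Theorem \ref{thmB} to feed the big-monodromy hypothesis into the general framework of \cite{Lyo}, and then let that framework (Kuga--Satake plus Faltings) do the work. The paper packages this as checking four axioms (A1)--(A4) of an abstract criterion; you have unpacked the Kuga--Satake/Faltings mechanism more explicitly, but the substance is the same.

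One small point worth flagging: the criterion in \cite{Lyo} asks for slightly more than big monodromy alone. In addition to (A4), it requires (A3), the existence of some fiber $\mathcal X_u$ with $\rho(\mathcal X_u)>2$, so that the Noether--Lefschetz locus is nonempty. The paper supplies this via an explicit construction (Proposition \ref{nongen_picard}), but also observes (Remark \ref{no-a3}) that once (A4) holds and $h^{0,0}(\mathbb V)>0$, the variation is nontrivial and the density result of Green and Oguiso yields (A3) automatically. Your write-up implicitly assumes the latter; it would be worth making that one sentence explicit when you check the hypotheses of \cite{Lyo}.
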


Part (ii) of this theorem may also be phrased as saying that $\rho(X)$ is equal to the dimension of the subspace of classes in $H^2(X_k,\Q_\ell(1))$ that are fixed by an open subgroup of $\Gal(k/k_0)$.  (\emph{A priori}, one only knows that $\rho(X)$ is bounded above by this dimension.)

Let us briefly mention the connection between Theorems \ref{thmB} and \ref{thmC}.  The work of Kuga and Satake \cite{KS}, which was amplified and globalized by Deligne \cite{Del-K3}, gives a certain Hodge correspondence between the middle singular cohomology of a surface with $p_g=1$ and an abelian variety (whose dimension is often quite large).  The framework in \cite[\S5]{Lyo}, which is a generalization of one due to Andr\'e \cite{Andre}, allows one to use the big monodromy result of Theorem \ref{thmB} to show that this Kuga-Satake correspondence is motivated in the sense of Andr\'{e} \cite{Andre-Mot}, and hence absolute Hodge in the sense of Deligne \cite{DMOS}.  Using this in conjunction with the work of Faltings \cite{Fal} on abelian varieties, one is able to deduce Theorem \ref{thmC}.

Since the completing the essential parts of this work, the preprint \cite{Moonen} of Moonen has appeared, which contains a different proof of Theorem \ref{thmC}.  In fact, Moonen's result (see Theorem 9.4(d) of \cite{Moonen}) is more general, in that he does not require $K$ to be ample and he also proves the Mumford-Tate Conjecture for these surfaces.  As here we do here, his method establishes the existence of a motivated correspondence between the surface and an abelian variety 
by placing it within a larger family.  The major difference between our approaches is that Moonen avoids having to explicitly determine the monodromy group of this family.  Hence Theorem \ref{thmB}, which may have further applications to the study of surfaces with $p_g=q=1$ and $K^2=2$, is not proved there.
 
To finish this introduction, let us mention a further plausible relation between the results of Theorems \ref{thmA} and \ref{thmB}. As noted above, we use Theorem \ref{thmA} to identify the global monodromy invariants in a fiber of the local system $R^2 f_\ast \Q(1)$ (see Proposition \ref{dim_I}), which is a key step in our proof of Theorem \ref{thmB}.  In particular, the essential content from Theorem \ref{thmA} used at this step is the following: There exist surfaces $X$ \emph{over $\C$} with $p_g=q=1$ and $K^2=2$ such that $\rho(X)=2$.  But in fact the statement of Theorem \ref{thmB} also implies this existence statement in a nonconstructive manner, due to the relation between the monodromy group and the Mumford-Tate group at a generic member of the family (see, for instance, \cite[Prop.\ 10.14]{PS-book}).  Furthermore, given that the parameter space $\mathcal S$ is rational and has points over $\Q$ (see the proof of Corollary \ref{family}), methods of Terasoma in \cite{Tera} should apply to give the existence (again, nonconstructively) of such surfaces over $\Q$.  From this perspective, one would be able to view Theorem \ref{thmA} as a constructive version of a corollary of Theorem \ref{thmB}.  It seems possible that one could identify the monodromy invariants for $R^2 f_\ast \Q(1)$ without the use of Theorem \ref{thmA}, but we have not checked this.

\vspace{1em}

Here is an outline of the paper.  In \S\ref{background_sec} we summarize the results of Bombieri--Catanese and Horikawa on the classification of surfaces with $p_g=q=1$ and $K^2=2$, and we describe the construction of the family $f\colon\mathcal X\to S$.  The equations for unramified double covers of elements in $\Dsys$ are developed in \S\ref{double_sec}, and two preliminary applications are made: The first shows that the divisor $\mathscr D$ on $\symE$ is not very ample (which necessitates the work done in \S\ref{sing_sec}) and the second gives explicit examples of fibers in $f\colon\mathcal X\to S$ with nongeneric Picard number.  We prove Theorem \ref{thmA} in \S\ref{picard_sec}.  In \S\ref{sing_sec} we use the equations developed in \S\ref{double_sec} to study the collection of singular elements in $\Dsys$.  The results of \S\ref{picard_sec} and \S\ref{sing_sec} allow us to use Picard-Lefschetz theory in \S\ref{PL_mon_sec} to prove Theorem \ref{thmB}.  Finally, we discuss the proof of Theorem \ref{thmC} in \S\ref{tate_sec}.

\subsection{Notation and Terminology} In this paper, $k_0$ will always be a subfield of $\C$ that is finitely generated over $\Q$, and $k$ will denote the algebraic closure of $k_0$ in $\C$.

The term \emph{algebraic surface} will mean a smooth, projective, geometrically irreducible variety of dimension 2 that is minimal.


\section{Preliminaries on Surfaces with $p_g=q=1$ and $K^2=2$}\label{background_sec}

In this section we give background on algebraic surfaces over $k$ with geometric genus $p_g=h^0(\Omega_X^2)=1$, with irregularity $q=h^0(\Omega_X^1)=1$, and whose canonical divisor $K$ has self-intersection number $K^2=2$.  By the Enriques classification of algebraic surfaces (see Table 10 in \cite{BHPV}), these surfaces are of general type.  Via their Albanese maps, they fiber into curves of genus two over an elliptic curve.

These surfaces were classified in characteristic zero by Bombieri--Catanese \cite{Cat} and Horikawa \cite{Hor}, and we will summarize the portions of their work that are most relevant here.  To describe the classification, we set up some notation.  Starting with an elliptic curve $E$ defined over $k$, one may form its symmetric square $\symE$, which is a surface whose points parametrize effective divisors $P+Q$ of degree two on $E$.  The Abel-Jacobi map
\begin{equation}\label{AJ_def}
\AJ\colon \symE\to E, \qquad P+Q \mapsto P\oplus Q
\end{equation}
sends such a divisor to the corresponding sum under the addition law $\oplus$ on $E$.  Letting $O\in E$ denote the group identity, define on $\symE$ the two divisors
\begin{eqnarray*}
D_0 &:=& \set{ O+Q \ | \ Q\in E} \\
G_0 &:=& \AJ^{-1}(O).
\end{eqnarray*}
In terms of the map $\AJ$, which gives $\symE$ the structure of a $\PP^1$-bundle over $E$, the divisor $D_0$ is (the image of) a section and $G_0$ is a fiber.  Using these, we form the divisor
\begin{equation}\label{D_def}
\mathscr D := 6D_0 - 2G_0.
\end{equation}
In \cite{Cat} and \cite{Hor} the following is proved:

\begin{theorem}[Bombieri--Catanese, Horikawa]\label{class}
For any elliptic curve $E$ over $k$, the linear system $\Dsys$ on $\symE$ has (projective) dimension 6 and its general element is smooth.  Furthermore, given any $B\in \Dsys$, let $X_B'$ denote the double cover of $\symE$ branched over $B$ that lives inside the total space of the line bundle associated to $\mathcal O_\symE(3D_0-G_0)$.  If $B$ has at most rational double points, then $X_B'$ is the canonical model of a (minimal) surface $X_B$ with $p_g=q=1$ and $K^2=2$.  In particular, $X_B$ has ample canonical divisor if and only if $B$ is smooth.

Conversely, if $X$ is a surface over $k$ with $p_g=q=1$ and $K^2=2$ with $\Alb(X) \simeq E$, then $X$ is isomorphic to $X_B$ for some $B\in \Dsys$ with at most rational double points.
\end{theorem}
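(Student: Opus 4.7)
The plan is to split this classification theorem into its two directions. For the direct construction of $X_B$ from $B$, I would exploit the $\PP^1$-bundle structure $\AJ\colon\symE\to E$, writing $\symE = \PP(\mathcal A)$ for a rank-$2$ bundle $\mathcal A$ on $E$. From $\operatorname{Pic}(\symE) = \Z D_0 \oplus \AJ^*\operatorname{Pic}(E)$ together with the intersections $D_0^2=1$, $D_0\cdot G_0=1$, $G_0^2=0$, one obtains $K_\symE = -2D_0 + G_0$. Surface Riemann-Roch then gives
\[
\chi(\mathcal O(\mathscr D)) = \chi(\mathcal O_\symE) + \tfrac{1}{2}(\mathscr D^2 - \mathscr D \cdot K_\symE) = 0 + \tfrac{1}{2}(12+2) = 7,
\]
and the higher cohomology vanishes by Kodaira--Kawamata vanishing (using that $\mathscr D - K_\symE = 8D_0-3G_0$ is ample via Nakai--Moishezon) together with Serre duality, yielding $\dim\Dsys = 6$. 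Smoothness of the general member reduces to a Bertini argument after analyzing the base locus via the pushforward $\AJ_*\mathcal O(\mathscr D)$, which is expressible in terms of symmetric powers of $\mathcal A$.

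Next, since $\mathcal O(\mathscr D) \cong L^{\otimes 2}$ for $L = \mathcal O(3D_0-G_0)$, the standard cyclic double cover construction produces $X_B' = \Spec_\symE(\mathcal O_\symE \oplus L^{-1})$, cut out by $z^2 = s_B$ inside the total space of $L$. The singularities of $X_B'$ coincide with those of $B$, so under the rational double point hypothesis the minimal resolution $X_B$ preserves all birational invariants. The projection formula yields
\[
\chi(\mathcal O_{X_B}) = \chi(\mathcal O_\symE) + \chi(L^{-1}) = 0 + 1 = 1,
\]
with the split $p_g = q = 1$ following from the individual Hodge numbers of $\mathcal O_\symE \oplus L^{-1}$. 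Riemann--Hurwitz gives $K_{X_B'} = f^*(K_\symE + L) = f^*D_0$, so $K_{X_B}^2 = 2D_0^2 = 2$. Since the exceptional $(-2)$-curves arising from singularities of $B$ are precisely the $K$-trivial curves on $X_B$, the canonical divisor is ample if and only if $B$ is smooth.

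For the converse, given $X$ with $p_g = q = 1$ and $K^2 = 2$, I would study the Albanese morphism $\alpha\colon X\to E = \Alb(X)$. Using $\chi(\mathcal O_X) = 1$ together with the Euler characteristic formula for fibered surfaces and the adjunction-type bound on $K^2$, one forces the general Albanese fiber to have genus $2$ and hence be hyperelliptic. The relative hyperelliptic involution glues globally (by minimality of $X$ and uniqueness of the genus-$2$ hyperelliptic quotient) to an involution $\iota$ on $X$, and $Y := X/\iota\to E$ is a $\PP^1$-bundle, identified as $\PP(\alpha_*\omega_{X/E})$. After verifying that $\alpha_*\omega_{X/E}$ has degree $1$ (from $\chi(\mathcal O_X)$ via Leray) and is indecomposable (else $Y$ would be a Hirzebruch surface, incompatible with $q(X)=1$), one identifies $Y\cong\symE$, and tracing the branch divisor class through this identification shows that $B\in\Dsys$. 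The hard part will be this final identification: ruling out a decomposable $\alpha_*\omega_{X/E}$, and then accurately pinning down the branch locus class inside $\operatorname{Pic}(\symE)$ so that it lands in $\Dsys$ rather than some other linear system of the same numerical type --- a step requiring careful bookkeeping with the Picard group of the $\PP^1$-bundle.
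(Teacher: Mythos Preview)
The paper does not actually prove this theorem. It is stated with attribution to Bombieri--Catanese \cite{Cat} and Horikawa \cite{Hor}, and the paper simply cites those sources without reproducing any argument. So there is no proof in the paper to compare your proposal against; what you have written is a sketch of how the original classification might go.

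As a sketch, your outline of the forward direction is sound: the Riemann--Roch computation $\chi(\mathcal O(\mathscr D))=7$ is correct, the double cover invariants $\chi(\mathcal O_{X_B})=1$, $p_g=q=1$, $K^2=2$ fall out as you indicate, and the ampleness criterion via the absence of $(-2)$-curves is the right mechanism. One small point: for Kodaira vanishing you need $\mathscr D-K_{\symE}=8D_0-3G_0$ ample, and Nakai--Moishezon requires checking against \emph{all} irreducible curves on $\symE$, not just $D_0$ and $G_0$; this is doable but you have not done it.

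In the converse direction there is a genuine error. You write that if $\alpha_\ast\omega_{X/E}$ were decomposable then $Y$ ``would be a Hirzebruch surface, incompatible with $q(X)=1$.'' This is false: $Y=\PP(\mathcal A)$ is a $\PP^1$-bundle over $E$ regardless of whether $\mathcal A$ splits, and such a surface always has $q=1$. A Hirzebruch surface is a $\PP^1$-bundle over $\PP^1$, which is not what arises here. The actual exclusion of the decomposable case in \cite{Cat} and \cite{Hor} proceeds differently --- one analyzes what branch divisors in the relevant numerical class can look like on $\PP(\mathcal L_1\oplus\mathcal L_2)$ and derives a contradiction with the invariants or the smoothness of $X$. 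Your final caveat that this identification is ``the hard part'' is well placed, but the reason you give for ruling out the split case does not work.
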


\begin{remark}\label{class_remark} \

\begin{enumerate}
\item Suppose as in the first part of the theorem that we have $B\in|\mathscr D|$ and the associated double cover $c\colon X_B\to\symE$.  If $B$ is smooth, then the (ample) canonical divisor $K$ of $X_B$ is algebraically equivalent to $c^\ast D_0$ and an Albanese fiber of $X_b$ is algebraically equivalent to $c^\ast G_0$.

\item If the elliptic curve $E$ is defined over the subfield $k_0$, then $\symE$ and $\mathscr D$ are as well; hence if $B\in\Dsys$ is defined over $k_0$, so is the surface $X_B$.  In this way, we may produce surfaces with $p_g=q=1$ and $K^2=2$ that are defined over $k_0$. 
\end{enumerate}
\end{remark}

By emulating the constructions of $\symE$, $\AJ$, $D_0$, and $G_0$ above in a relative setting, one may obtain a projective flat family over $\Q$ that contains (among its $k$-fibers) the canonical models of all surfaces over $k$ with $p_g=q=1$ and $K^2=2$.  This idea is outlined in \cite[\S V]{Cat} and we reproduce it now:  

\begin{corollary}\label{family}
There exists a smooth projective family $f\colon\mathcal X\to \mathcal S$ over $\Q$, where $\mathcal S$ is a smooth rational variety of dimension $7$, such that every surface over $k$ with $p_g=q=1,K^2=2$, and $K$ ample is isomorphic to $\mathcal X_s$ for some $s\in \mathcal S(k)$.  Moreover, there exist two effective divisors on $\mathcal X$ which are flat over $\mathcal S$ and whose restrictions to any fiber $\mathcal X_s$ are algebraically equivalent to, respectively, the canonical class and an Albanese fiber.
\end{corollary}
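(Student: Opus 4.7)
The plan is to emulate in a relative setting the constructions of $\symE$, $\AJ$, $D_0$, $G_0$, and $\mathscr D$ from the paragraphs preceding Theorem \ref{class}, and then restrict to the open locus where the branch divisor is smooth (so that $K$ becomes ample).

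First, I would fix a one-parameter family of elliptic curves $\pi\colon\mathcal E\to T$ over $\Q$ with zero section $O\colon T\to\mathcal E$, where $T$ is a smooth rational curve and every elliptic curve over $k$ is isomorphic to some $\mathcal E_t$. The Legendre family $y^2=x(x-1)(x-\lambda)$ over $T=\mathbb A^1_\Q\setminus\{0,1\}$ does the job, since its $j$-map is surjective. Forming the relative symmetric square $\pi^{(2)}\colon\mathcal E^{(2)}\to T$ together with the relative Abel--Jacobi map to $\mathcal E$ and the relative versions $\mathcal D_0,\mathcal G_0$ of $D_0,G_0$, I set $\mathcal L:=\mathcal O_{\mathcal E^{(2)}}(6\mathcal D_0-2\mathcal G_0)$; by construction the restriction of $\mathcal L$ to $\mathcal E_t^{(2)}$ is the line bundle attached to $\mathscr D$ on $\mathcal E_t$.

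Next, I would invoke Theorem \ref{class} to see that $h^0(\mathcal E_t^{(2)},\mathcal L|_{\mathcal E_t^{(2)}})=7$ is constant in $t$, so cohomology and base change (Grauert) force $\pi^{(2)}_{\ast}\mathcal L$ to be locally free of rank $7$ on $T$. Then $\mathcal S_0:=\PP(\pi^{(2)}_{\ast}\mathcal L)\to T$ is a $\PP^6$-bundle over the rational curve $T$, hence a smooth rational $\Q$-variety of dimension $7$. On $\mathcal S_0\times_T\mathcal E^{(2)}$ one has a universal divisor $\mathcal B$ whose fiber over a point $s\in\mathcal S_0$ lying above $t\in T$ is the element of $\PP(H^0(\mathcal E_t^{(2)},\mathcal L|_{\mathcal E_t^{(2)}}))$ parametrized by $s$. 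I then let $\mathcal S\subset\mathcal S_0$ be the open locus where $\mathcal B$ is smooth over $\mathcal S_0$; by the second assertion of Theorem \ref{class} this locus is nonempty, so $\mathcal S$ is a smooth rational variety of dimension $7$.

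Using the ``square root'' $\mathcal M:=\mathcal O_{\mathcal E^{(2)}}(3\mathcal D_0-\mathcal G_0)$ of $\mathcal L$ and the tautological section cutting out $\mathcal B|_{\mathcal S}$, I form the relative double cover $f\colon\mathcal X\to\mathcal S$ branched along $\mathcal B|_{\mathcal S}$ inside the total space of the pullback of $\mathcal M$ to $\mathcal S\times_T\mathcal E^{(2)}$; each fiber $\mathcal X_s$ is then the smooth minimal surface $X_{\mathcal B_s}$ of Theorem \ref{class}, hence has $p_g=q=1$, $K^2=2$, and $K$ ample. For surjectivity-up-to-isomorphism, any $X/k$ with these invariants is isomorphic to $X_B$ for some smooth $B\in\Dsys$ on $\symE$ with $E=\Alb(X)$; choosing $t\in T(k)$ with $\mathcal E_t\simeq E$ and the resulting $s\in\mathcal S(k)$ above $t$ representing $B$, we get $X\simeq\mathcal X_s$. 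The required effective divisors on $\mathcal X$ are the pullbacks of $\mathcal S\times_T\mathcal D_0$ and $\mathcal S\times_T\mathcal G_0$ under $\mathcal X\to\mathcal S\times_T\mathcal E^{(2)}$: their fiberwise classes are $c^{\ast}D_0$ and $c^{\ast}G_0$, hence algebraically equivalent to the canonical class and an Albanese fiber by Remark \ref{class_remark}(1), and flatness over $\mathcal S$ follows from flatness of $\mathcal D_0,\mathcal G_0$ over $T$ together with flatness of $\mathcal X\to\mathcal S\times_T\mathcal E^{(2)}$. I expect the only delicate point to be the local freeness of $\pi^{(2)}_{\ast}\mathcal L$, but this is immediate from Theorem \ref{class} and Grauert; everything else is routine relativization.
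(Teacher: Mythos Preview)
Your proof is correct and follows essentially the same approach as the paper's: relativize the Bombieri--Catanese--Horikawa construction over a one-parameter family of elliptic curves, push forward the line bundle to get a rank-$7$ locally free sheaf, projectivize to obtain $\mathcal S_0$, form the universal double cover, and restrict to the smooth locus. The only minor differences are that the paper uses a fine modular curve such as $Y_1(5)$ rather than the Legendre family, and invokes the vanishing $h^1(\mathcal O_{\symE}(\mathscr D))=0$ from \cite[Theorem~1.7]{CC} (together with $h^0=7$) for the local freeness of the pushforward, whereas you appeal to Grauert via constancy of $h^0$; both arguments are standard and either suffices here.
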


\begin{proof}
Fix an (open) modular curve $A$ of genus 0 that is connected, defined over $\Q$, and serves as a fine moduli space for elliptic curves with some level structure; for instance, one may take $A$ to be modular curve $Y_1(5)$, which also possesses rational points.  Let $\mathcal E\to A$ denote the corresponding universal family and let $O:A\to \mathcal E$ be its identity section.  The product $\mathcal E\times_A\mathcal E$ possesses the $A$-involution which switches the factors, and we obtain the relative symmetric square $\symbigE\to A$ as the quotient by this involution.  Note that since the summation law $\mathcal E\times_A\mathcal E$ is invariant under this involution, we also have the relative Abel-Jacobi map $\mathcal A\mathcal J: \symbigE\to \mathcal E$.

On $\symbigE$, one may define divisors $\mathcal D_0,\mathcal G_0$ which are the relative versions of the divisors $D_0,G_0$ above.  The salient diagram is the following, wherein the triangle and square both commute:
\begin{equation}\label{diagram}
\xymatrix{
A\times \mathcal E \ar[rd] \ar[d]_{O\times \text{Id}} & & \\
\mathcal E\times_A\mathcal E \ar[r]^{\text{quot.}} \ar[d] & \symbigE \ar[d]_{\mathcal A\mathcal J} & \\
A & \mathcal E \ar[l] & A \ar[l]_{O}
}
\end{equation}
Then $\mathcal D_0$ is defined as the image of $A\times\mathcal E\to\symbigE$ at the top of (\ref{diagram}) and $\mathcal G_0$ is defined as the fiber product $\symbigE\times_{\mathcal E} A$ obtained from the bottom right side of (\ref{diagram}).  Moveover, fixing a point $a\in A(k)$ such that $\mathcal E_a\simeq E$, we may base change (\ref{diagram}) via $\Spec k\stackrel{a}{\to} A$ (applied to the copy of $A$ in the lower left of (\ref{diagram})) to conclude that the fibers $\mathcal D_{0,a},\mathcal G_{0,a}$ correspond to the divisors $D_0,G_0$ on $\symE$.

Define $\mathcal L = \mathcal O_{\symbigE}(3\mathcal D_0-\mathcal G_0)$.  For our point $a\in A(k)$, $\mathcal L$ pulls back to $\mathcal O_{\symE}(3D_0-G_0)$ on the fiber $\mathcal E^{(2)}_a\simeq \symE$, and hence $\mathcal L^2$ pulls back to $\mathcal O_{\symE}(\mathscr D)$.  As $h^0(\mathcal O_{\symE}(\mathscr D))=7$ and $h^1(\mathcal O_{\symE}(\mathscr D))=0$ (for all $E$) by \cite[Theorem 1.7]{CC}, we conclude the following (see \cite[\S5]{Mum-AV}): If $\pi\colon \symbigE\to A$ denotes the structure morphism of $\symbigE$ as a $A$-scheme, then $\mathcal F:= \pi_\ast(\mathcal L^2)$ is a locally free sheaf of rank 7 on $A$.  Define the corresponding $\PP^6$-bundle $\mathcal S_0:= \PP(\mathcal F) := \text{Proj}(\text{Sym}(\mathcal F^\vee))$ over $A$.

If $s\in \mathcal S_0$ lies over $a\in A(k)$ and $\mathcal E_a\simeq E$, then $s$ corresponds to an element of $\PP H^0(\symE,\mathcal O_\symE(\mathscr D))$.  In this way, it makes sense to define the following incidence correspondence on $\symbigE\times_A \mathcal S_0$:
\[
\mathcal B_0 := \set{(r,s)\in \symbigE\times_A \mathcal S_0 \ \Big| \ s(r) = 0}.
\]
The fiber of $\mathcal B_0$ over $s$ is a divisor $B$ in the linear system $|\mathscr D|$ on $\symE$.  If $p_1:\symbigE\times_A \mathcal S_0\to \symbigE$ denotes the first projection, one may verify that $\mathcal B_0$ is linearly equivalent on $\symbigE\times_A \mathcal S_0$ to $p_1^\ast(6\mathcal D_0-2\mathcal G_0)$.  Hence if $\mathcal M:=p_1^\ast \mathcal L$, then
\[
\mathcal O_{\symbigE\times_A \mathcal S_0}(\mathcal B_0)\simeq \mathcal M^2.
\]
Inside the line bundle associated to $\mathcal M$, we may form the double cover $\mathcal X_0\to \symbigE\times_A \mathcal S_0$ that is branched over $\mathcal B_0$, which in turn yields the flat family $\mathcal X_0\to \mathcal S_0$.  With $s,E,B$ as above, the fiber $\mathcal X_{0,s}$ is then isomorphic to the double cover $X_B'$ of $\symE$.  Finally, we may define $f\colon\mathcal X\to \mathcal S$ to be the subfamily of smooth fibers in $\mathcal X_0\to\mathcal S_0$.  Note that each step in the construction of $f\colon\mathcal X\to\mathcal S$ may be done with $\Q$-coefficients, so this family is defined over $\Q$.  Moreover, $\mathcal S$ is an open subset of a projective bundle over a curve of 

To finish up, we note by Theorem \ref{class} and the construction of $f\colon\mathcal X\to \mathcal S$ that any surface $X$ over $k$ with $p_g=q=1, K^2=2$, and $K$ ample is isomorphic to $\mathcal X_s$ for some $s\in \mathcal S(k)$.  Moreover, the divisors $\mathcal D_0,\mathcal G_0$ may be pulled back to divisors on $\mathcal X$.  By Remark \ref{class_remark}(1), the first of these restricts to a class in each fiber that is algebraically equivalent to the canonical class ( thus giving a polarization to the family), while the second restricts everywhere to an Albanese fiber.
\end{proof}


\section{Double Covers of Surfaces with $p_g=q=1$ and $K^2=2$}\label{double_sec}

In this section, we obtain explicit equations for certain unramified double covers of surfaces with $p_g=q=1$ and $K^2=2$.  This work is directly inspired by Ishida's investigation in \cite{Ish} of Catanese--Ciliberto surfaces of fiber genus three, which in turn draws on work of Takahashi \cite{Tak}.

We start by fixing an elliptic curve $E$ over $k$.  The Abel-Jacobi map (\ref{AJ_def}) realizes $\symE$ as a $\PP^1$-bundle over $E$.  In fact, $\symE \simeq \Proj(\text{Sym}(\mathcal V'))$ for an indecomposable locally free sheaf $\mathcal V'$ over $E$ of rank 2 and degree 1 (see \cite[p.390]{CC}).\footnote{Note that we are \emph{not} writing $\symE\simeq\PP(\mathcal V'):=\Proj\big(\text{Sym}\big((\mathcal V')^\vee\big)\big)$ here; in that case, $\mathcal V'$ would have rank 2 and degree -1, which is more in line with the notations of \cite{CC}.}  By \cite{Ati}, replacing $\mathcal V'$ by another indecomposable locally free sheaf $\mathcal V$ of rank 2 and degree 1 on $E$ does not change the isomorphism class of $\Proj(\text{Sym}(\mathcal V))$, and we will make a convenient choice of such $\mathcal V$.  Let $\tilde E$ be another elliptic curve over $k$ such that one has an isogeny $\varphi\colon\tilde E\to E$ of degree 2.  Let $\ker\varphi = \{\tilde O,C\}$, where $\tilde O$ is the group identity on $\tilde E$ and $C$ is a 2-torsion point.  Then by \cite{Ati}, the sheaf
\[
\mathcal V:= \varphi_\ast \mathcal O_{\tilde E}(\tilde O)
\]
is an indecomposable locally free sheaf over $E$ of rank 2 and degree 1.

Using this definition of $\mathcal V$, we fix an identification between $\symE$ and $\Proj(\text{Sym}(\mathcal V))$ and define $\tilde P$ via the following fiber product diagram:
\[
\xymatrix{
\tilde P \ar[r]^\Phi \ar[d] & \symE \ar[d]^{\AJ} \\
\tilde E \ar[r]^\varphi & E
}
\]
Being the base change of $\varphi$, the morphism $\Phi$ is the unramified quotient map resulting from a free involution $\iota\colon \tilde P\to \tilde P$.  Explicitly, the involution $\iota$ is the lift to $\tilde P$ of the translation map $Q\mapsto Q+C$ on $\tilde E$.  In particular, $\iota$ interchanges the fibers $\tilde P_{\tilde O}$ and $\tilde P_C$ over $\tilde O$ and $C$.

We note that $\tilde P\simeq \Proj(\text{Sym}(\varphi^\ast \mathcal V))$.  The advantage of introducing $\tilde P$ is that, unlike $\mathcal V$, the sheaf $\varphi^\ast \mathcal V$ is decomposable.  Indeed, one has
\[
\varphi^\ast \mathcal V \simeq \varphi^\ast \varphi_\ast \mathcal O_{\tilde E}(\tilde O) \simeq \mathcal O_{\tilde E}(\tilde O) \oplus \mathcal O_{\tilde E}(C).
\]
Let $Z_0$ and $Z_1$ be those sections in
\[
H^0(\tilde E,\varphi^\ast \mathcal V) \simeq H^0(\tilde E,\mathcal O_{\tilde E}(\tilde O)) \oplus H^0(\tilde E,\mathcal O_{\tilde E}(C))
\]
that, respectively, correspond to the constant function $1$ in the summand $H^0(\tilde E,\mathcal O_{\tilde E}(\tilde O))$ and to the constant function 1 in the summand $H^0(\tilde E,\mathcal O_{\tilde E}(C))$.  Then away from the fibers $\tilde P_{\tilde O}$ and $\tilde P_{C}$, the $\PP^1$-bundle $\tilde P$ is locally trivialized by the relative projective coordinate system $(Z_0:Z_1)$.  In these local coordinates, the involution $\iota$ is expressed as
\begin{equation}\label{involution}
\iota\colon (Q; Z_0:Z_1) \mapsto (Q+C; Z_1:Z_0)
\end{equation}
for any $P\in \tilde E\setminus\{\tilde O,C\}$.

Recalling the divisor $\mathscr D$ on $\symE$ in (\ref{D_def}), the invertible sheaf
\[
\mathcal O_\symE(\mathscr D) \simeq \mathcal O_\symE(6) \otimes \mathcal O_\symE(-2\AJ^{-1}(O))
\]
pulls back to
\[
\Phi^\ast \mathcal O_\symE(\mathscr D) \simeq \mathcal O_{\tilde P}(6) \otimes \mathcal O_\symE(-2\tilde P_{\tilde O}-2\tilde P_C)
\]
on $\tilde P$.  As $\Phi$ is the quotient map of $\tilde P$ under $\iota$, the $\iota$-invariant sections of the latter bundle correspond to the sections of the former (see \cite[\S7, Prop.\ 2]{Mum-AV}):
\begin{equation}\label{pullback_isom}
\Phi^\ast: H^0(\symE,\mathcal O_\symE(\mathscr D)) \  \tilde\To \ \left(H^0(\tilde P,\Phi^\ast \mathcal O_\symE(\mathscr D))\right)^\iota.
\end{equation}
Moreover, given $s\in H^0(\symE,\mathcal O_\symE(\mathscr D))$, the divisor $Z(\Phi^\ast s)\subseteq \tilde P$ is an unramified double cover of the divisor $Z(s)\subseteq \symE$.  Equations for $Z(\Phi^\ast s)$ are simpler are simpler to derive than equations for $Z(s)$, and this will aid the local study of divisors in $\Dsys$.

Choose for $\tilde E$ a Weierstrass equation $y^2=c(x)$, where $c(x)$ is a monic nondegenerate cubic polynomial.  Then one may write in these coordinates $C:=(\alpha,0)$, where $c(\alpha)=0$.   Letting $\beta,\gamma$ be the other two roots of $c(x)$, define the constant $A:=c'(\alpha)=(\alpha-\beta)(\alpha-\gamma)$.  We define on $\tilde E$ the following rational functions:
\begin{eqnarray*}
g_0(Q) &:=& x(Q)-x(C) \\
&=& x-\alpha \\
h_0(Q) &:=& y(Q) \\
&=& y \\
g_1(Q) &:=& x(Q+C)-x(C) \\
&=& \frac{A}{x-\alpha} \\
h_1(Q) &:=& y(Q+C) \\
&=&  -\frac{Ay}{(x-\alpha)^2}.
\end{eqnarray*}
Then the following proposition is readily verified:

\begin{proposition}\label{psi_prop}
The following $7$ sections form a basis for the subspace of $\iota$-invariant sections of the invertible sheaf $\Phi^\ast \mathcal O_\symE(\mathscr D) \simeq \mathcal O_{\tilde P}(6) \otimes \mathcal O_{\tilde P}(-2\tilde P_{\tilde O}-2\tilde P_C)$ on $\tilde P$:
\begin{eqnarray*}
\psi_0 &:=& g_0Z_0^6+g_1 Z_1^6 \\ 
\psi_1 &:=& g_0^2Z_0^6+g_1^2 Z_1^6 \\
\psi_2 &:=& g_0Z_0^5Z_1+g_1Z_0Z_1^5 \\
\psi_3 &:=& h_0Z_0^5Z_1+h_1Z_0Z_1^5 \\
\psi_4 &:=& Z_0^4Z_1^2+Z_0^2Z_1^4 \\
\psi_5 &:=& g_0Z_0^4Z_1^2+g_1Z_0^2Z_1^4 \\
\psi_6 &:=& Z_0^3Z_1^3.
\end{eqnarray*}
\end{proposition}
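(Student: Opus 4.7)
The plan rests on a dimension count: by the isomorphism (\ref{pullback_isom}) together with the fact (recalled in the proof of Corollary \ref{family}) that $h^0(\mathcal O_\symE(\mathscr D)) = 7$, the space $H^0(\tilde P, \Phi^\ast \mathcal O_\symE(\mathscr D))^\iota$ already has dimension $7$. Hence it suffices to verify that each $\psi_i$ is (a) a well-defined global section of $\Phi^\ast \mathcal O_\symE(\mathscr D)$, (b) $\iota$-invariant, and (c) that the seven of them are linearly independent, since then they must form a basis.

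For (a), the plan is to apply the projection formula along the $\PP^1$-bundle map $\tilde P \to \tilde E$, together with the identification $\varphi^\ast \mathcal V \simeq \mathcal O_{\tilde E}(\tilde O) \oplus \mathcal O_{\tilde E}(C)$. This produces the decomposition
\[
H^0\bigl(\tilde P, \mathcal O_{\tilde P}(6) \otimes \mathcal O_{\tilde P}(-2\tilde P_{\tilde O} - 2\tilde P_C)\bigr) \simeq \bigoplus_{a+b=6} H^0\bigl(\tilde E, \mathcal O_{\tilde E}((a-2)\tilde O + (b-2)C)\bigr),
\]
in which the $(a,b)$-summand records the coefficient of the monomial $Z_0^a Z_1^b$. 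It then remains to check, for each coefficient of each $\psi_i$, that it lies in $H^0$ of the relevant degree-$2$ line bundle. For example, $\mathrm{div}(g_0) = 2C - 2\tilde O$ places $g_0$ in $H^0(\mathcal O_{\tilde E}(4\tilde O - 2C))$ as required by the $Z_0^6$-term of $\psi_0$; analogous divisor calculations for $g_0^2$, $g_1$, $g_1^2$, $h_0$, $h_1$, and the constant $1$ cover the remaining monomial positions.

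For (b), I would use the description (\ref{involution}) of $\iota$ together with the translation identities $g_0(Q+C) = g_1(Q)$ and $h_0(Q+C) = h_1(Q)$, whose reverses follow from $2C = \tilde O$. Each $\psi_i$ is then manifestly preserved under the simultaneous swaps $Z_0 \leftrightarrow Z_1$, $g_0 \leftrightarrow g_1$, $h_0 \leftrightarrow h_1$.

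For (c), linear independence is immediate: the seven $\psi_i$ distribute across the four monomial classes $\{Z_0^6, Z_1^6\}$, $\{Z_0^5 Z_1, Z_0 Z_1^5\}$, $\{Z_0^4 Z_1^2, Z_0^2 Z_1^4\}$, and $\{Z_0^3 Z_1^3\}$, and within each class the distinct coefficient pairs (such as $(g_0, g_1)$ versus $(g_0^2, g_1^2)$, or $(g_0, g_1)$ versus $(h_0, h_1)$, or $(1,1)$ versus $(g_0, g_1)$) are linearly independent as rational functions on $\tilde E$. The main obstacle is nothing conceptual but rather the careful bookkeeping of pole and zero orders at $\tilde O$ and $C$ needed in step (a); steps (b) and (c) reduce to the symmetry observation and the inspection just described.
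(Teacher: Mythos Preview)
Your proposal is correct and follows the natural verification the paper alludes to (the paper itself offers no proof beyond ``readily verified''). Your decomposition via the projection formula, the divisor checks for membership in each summand, the $\iota$-invariance via the swap $(Z_0,g_0,h_0)\leftrightarrow(Z_1,g_1,h_1)$, and the linear-independence argument together with the dimension count from (\ref{pullback_isom}) and $h^0(\mathcal O_\symE(\mathscr D))=7$ are exactly what one would do to flesh this out.
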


\begin{definition}\label{Psi_defn}
With regard to the isomorphism (\ref{pullback_isom}), let $\Psi_i\in H^0(\symE,\mathcal O_\symE(\mathscr D))$ be defined by $\Psi_i := (\Phi^{\ast})^{-1}(\psi_i)$ for $i=0,\ldots,6$, so that these seven elements form a basis for $H^0(\symE,\mathcal O_\symE(\mathscr D))$.  For $a:=(a_0:\cdots: a_6)\in \PP^6$, let $B(a) \in \Dsys$ denote the divisor
\[
B(a) := Z\left( \sum_{i=0}^6 a_i \Psi_i\right) \subseteq \symE.
\]
Finally, let $X(a)'$ denote the double cover of $\symE$ inside the line bundle associated to $\mathcal O_\symE(3D_0-G_0)$ that is ramified over $B(a)$.  If $X(a)'$, or equivalently $B(a)$, has only rational double point singularities, let $X(a)$ denote its minimal resolution.
\end{definition}

For $a\in\PP^6$, the divisor $B(a)$ in $\symE$ pulls back to the divisor $\tilde B(a) = Z\left( \sum_{i=0}^6 a_i \psi_i\right) \subseteq \tilde P$ via the unramified double cover $\Phi$.  Thus $\tilde B(a)$ is an unramified double cover of $B(a)$.  One may also pull back the surface $X(a)'$ over $\symE$ to obtain a surface $\tilde X(a)'$ over $\tilde P$; note that $\tilde X(a)'$ is the double cover of $\tilde P$ inside the line bundle associated to $\Phi^\ast \mathcal O_\symE(3D_0-G_0)\simeq \mathcal O_{\tilde P}(3)\otimes\mathcal O_{\tilde P}(-\tilde P_{\tilde O}-\tilde P_C)$ that is ramified over $\tilde B(a)$.  If $X(a)'$ has only rational double point singularities, then the minimal resolution $\tilde X(a)$ of $\tilde X(a)'$ is the pullback from $\symE$ to $\tilde P$ of the double cover $X(a)$ with $p_g=q=1$ and $K^2=2$.  We obtain thus the following Cartesian diagram:
\[
\xymatrix{
\tilde X(a) \ar[r] \ar[d] & X(a) \ar[d] \\
\tilde E \ar[r]^\varphi & E
}
\]

At this point we indicate a result that will be used in \S\ref{picard_sec}:

\begin{lemma}\label{alb_lemma}
We have $\Alb(\tilde X(a)) = \tilde E$.
\end{lemma}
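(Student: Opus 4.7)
The plan is to identify $\Alb(\tilde X(a))$ with $\tilde E$ by means of the morphism $\pi_{\tilde E}\colon \tilde X(a)\to \tilde E$ given by the left vertical arrow in the Cartesian diagram above. Being the base change of $\AJ\colon X(a)\to E$, this morphism is surjective with connected fibers (its generic fiber is a smooth curve of genus $2$). Hence by the universal property of the Albanese, $\pi_{\tilde E}$ factors (up to translation) as $\pi_{\tilde E}=\beta\circ\alpha$, where $\alpha\colon \tilde X(a)\to\Alb(\tilde X(a))$ is the Albanese map and $\beta\colon \Alb(\tilde X(a))\to\tilde E$ is a surjective morphism of abelian varieties. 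It therefore suffices to show (i) $q(\tilde X(a))=1$, so that $\Alb(\tilde X(a))$ is an elliptic curve, and (ii) $\beta$ has degree $1$.

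For (i), since rational double point resolutions do not affect $h^1$ of the structure sheaf, I may compute $q(\tilde X(a))$ on the singular model $\tilde X(a)'$. From the double cover $c\colon\tilde X(a)'\to\tilde P$ with defining line bundle $L=\mathcal{O}_{\tilde P}(3)\otimes\pi^*\mathcal{O}_{\tilde E}(-\tilde O-C)$, we have $c_*\mathcal{O}_{\tilde X(a)'}\simeq \mathcal{O}_{\tilde P}\oplus L^{-1}$, so
\[
H^1(\tilde X(a)',\mathcal{O})=H^1(\tilde P,\mathcal{O}_{\tilde P})\oplus H^1(\tilde P,L^{-1}).
\]
The first summand is isomorphic to $H^1(\tilde E,\mathcal{O}_{\tilde E})$ via the $\PP^1$-bundle structure on $\pi\colon\tilde P\to\tilde E$, and hence is one-dimensional. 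For the second, $R^0\pi_*L^{-1}=0$ since $H^0(\PP^1,\mathcal{O}(-3))=0$; and relative Serre duality for the $\PP^1$-bundle, combined with the decomposition $\varphi^*\mathcal{V}\simeq\mathcal{O}_{\tilde E}(\tilde O)\oplus\mathcal{O}_{\tilde E}(C)$, yields
\[
R^1\pi_*L^{-1}\simeq \mathcal{O}_{\tilde E}(-\tilde O)\oplus\mathcal{O}_{\tilde E}(-C),
\]
both of whose summands have negative degree on $\tilde E$ and hence no global sections. By the Leray spectral sequence, $H^1(\tilde P,L^{-1})=0$, and therefore $q(\tilde X(a))=1$.

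With (i) in hand, $\beta$ is an isogeny between elliptic curves. For (ii), over a general $Q\in\tilde E$ the fiber $\pi_{\tilde E}^{-1}(Q)$ decomposes as the disjoint union $\bigsqcup_{P\in\beta^{-1}(Q)}\alpha^{-1}(P)$; since this fiber is connected, $\#\beta^{-1}(Q)=1$, so $\deg\beta=1$ and $\beta$ is an isomorphism. The main technical step, and the only real obstacle to executing this plan, is the vanishing $H^1(\tilde P,L^{-1})=0$; this is tractable precisely because of the decomposability of $\varphi^*\mathcal{V}$, which is what motivated passing from $\symE$ to $\tilde P$ in the first place.
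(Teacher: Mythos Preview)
Your argument is correct, but it takes a genuinely different route from the paper's. The paper argues $q(\tilde X(a))\le 1$ by contradiction: assuming $q>1$, the Albanese image is either a curve of genus $>1$ (which would force $\tilde X(a)\to\tilde E$ to have disconnected fibers) or a surface, in which case Severi's inequality $K^2\ge 4\chi$ for surfaces of maximal Albanese dimension gives $4=K_{\tilde X(a)}^2\ge 4\chi(\mathcal O_{\tilde X(a)})=8$, a contradiction. You instead compute $q$ directly from the double-cover decomposition $c_*\mathcal O\simeq\mathcal O_{\tilde P}\oplus L^{-1}$ together with a Leray/relative-duality calculation on the $\PP^1$-bundle $\tilde P\to\tilde E$, obtaining $R^1\pi_*L^{-1}\simeq\mathcal O_{\tilde E}(-\tilde O)\oplus\mathcal O_{\tilde E}(-C)$ and hence $H^1(\tilde P,L^{-1})=0$. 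Your approach is more elementary in that it avoids invoking Severi's inequality (a nontrivial theorem of Pardini), and it makes explicit use of the decomposability of $\varphi^\ast\mathcal V$, which is exactly the reason the paper passes from $\symE$ to $\tilde P$; the paper's approach is quicker and relies only on the numerical invariants $K^2$ and $\chi$, which are easily read off from the \'etale double cover. Your step~(ii) is essentially what the paper encodes in the phrase ``it suffices to show $q(\tilde X(a))\le 1$,'' so the two proofs agree on that part.
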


\begin{proof}
Since $\tilde X(a)\to \tilde E$ has connected fibers, it suffices to show that $q(\tilde X(a))\leq 1$.  Assume for contradiction that $q(\tilde X(a)) = q >1$.  If the image of $\tilde X(a)$ inside its Albanese map were a curve, this would mean that $\tilde X(a)\to\tilde E$ factors as $\tilde X(a)\to \Gamma \to \tilde E$, where $\Gamma$ is a curve of genus $q>1$.  But as $\Gamma\to \tilde E$ has degree greater than 1, this would imply the fibers of $\tilde X(a)\to \tilde E$ are disconnected.  Hence the conclusion is that the image of $\tilde X(a)$ in $\Alb (\tilde X(a))$ is a surface.  But then, as the minimality of $X(a)$ implies the minimality of the $\tilde X(a)$, Severi's inequality (see \cite{Pardini}) would imply that $4 = K_{\tilde X(a)}^2\geq 4\chi(\mathcal O_{\tilde X(a)}) = 8$, which is false.  This proves the lemma.
\end{proof}

Since $\tilde X(a)'\to X(a)'$ is an unramified double cover, and since $(Z_0:Z_1)$ are relative projective coordinates on $\tilde P$ away from the fibers over $\tilde O,C$, the Albanese fiber $\tilde X(a)'_Q$ of $\tilde X(a)'$ over the point $Q\in\tilde E\setminus\{\tilde O,C\}$ is isomorphic to the hyperelliptic curves
\begin{equation}\label{away_fibers}
y^2 = \sum_{i=0}^6 a_i \psi_i(Q; Z_0:Z_1),
\end{equation}
regarded as a curve in the weighted projective plane $\PP(1:1:3)$.

Next we consider the equations of $\tilde B(a)$ and $\tilde X(a)'$ near the fiber $\tilde P_{\tilde O}$.  Let $t:=x/y$, which is a local parameter at $\tilde O\in\tilde E$.  It follows from the definition of $Z_0$ that relative projective coordinates for $\tilde P$ in a neighborhood of $\tilde P_{\tilde O}$ are given by $(Z_0':Z_1):=(t^{-1}Z_0:Z_1)$.  Moreover, for each $\psi_i$ in Proposition \ref{psi_prop}, let us define $\chi_i$ by $\psi_i = t^2\chi_i$.  Explicitly, $\chi_i$ is obtained by substituting $Z_0=tZ_0'$ into the equations in Proposition \ref{psi_prop}, expanding the rational functions $g_0, g_1, h_0, h_1$ in terms of $t$, and dividing by $t^2$.  Upon doing so, one obtains
\begin{eqnarray*}
\chi_0 &=& AZ_1^6+O(t^2) \\ 
\chi_1 &=& Z_0'^6+O(t^2) \\
\chi_2 &=& t(Z_0'Z_1(Z_0^4+AZ_1^4))+O(t^3) \\
\chi_3 &=& Z_0'Z_1(Z_0'^4-AZ_1^4)+O(t^2) \\
\chi_4 &=& Z_0'^2Z_1^4+O(t^2) \\
\chi_5 &=& Z_0'^4Z_1^2+O(t^2) \\
\chi_6 &=& t(Z_0'^3Z_1^3).
\end{eqnarray*}
These equations will be used to describe the divisors $\tilde B(a)$ near the fiber $\tilde P_{\tilde O}$; explicitly, $\tilde B(a)$ is given by $Z(\sum_{i=0}^6 a_i\chi_i)$ and thus $\tilde X(a)'$ is isomorphic to $y^2=\sum_{i=0}^6 a_i\chi_i$ near $\tilde P_{\tilde O}$.  In particular, setting $t=0$, the fiber $\tilde X(a)'_{\tilde O}$ of $\tilde X(a)'$ over $\tilde O$ is isomorphic to the hyperelliptic curve
\begin{equation}\label{O_fiber}
y^2 = a_1Z_0'^6 +a_3 Z_0'^5 Z_1+a_5Z_0'^4Z_1^2+a_4 Z_0'^2Z_1^4-a_3AZ_0'Z_1^5+a_0AZ_1^6.
\end{equation}

A similar discussion applies near the fiber $\tilde P_{C}$, but arguments using the symmetry $\iota$ will eliminate the need for specific coordinates and equations there.

\begin{remark}
As is our convention, let $k_0$ be a finitely generated field whose algebraic closure is $k$.  If $\tilde E$, $C$, and $a\in \PP^6$ are defined over $k_0$, then $E\simeq \tilde E/C$, $\symE$, $\tilde P$, $\iota$, $B(a)$, $X(a)'$, $\tilde B(a)$, and $\tilde X(a)'$, along with the previously described morphisms between them, are all defined over $k_0$ as well.  In particular, the fibers of $\tilde X(a)'$ away from $\tilde O,C$ are isomorphic over $k_0$ to the hyperelliptic curves given by (\ref{away_fibers}), and the fibers over $\tilde O$ and $C$ are isomorphic over $k_0$ to the hyperelliptic curve (\ref{O_fiber}).  
\end{remark}

The following result gives a first application of the equations in this section:

\begin{proposition}\label{not_very_ample}
The complete linear system $\Dsys$ on $\symE$ is base point free and ample, but is not very ample.
\end{proposition}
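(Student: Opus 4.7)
The proposition contains three claims; the first two are fairly routine given the framework built up in this section, while the failure of very ampleness is the main obstacle and requires the explicit expansions near $\tilde{P}_{\tilde O}$.

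\textbf{Base-point-freeness.} I would work on $\tilde{P}$, using the isomorphism (\ref{pullback_isom}) to reduce the problem to showing that the seven $\iota$-invariant sections $\psi_0,\ldots,\psi_6$ have no common zero. Off the two distinguished fibers $\tilde{P}_{\tilde O}$ and $\tilde{P}_C$, the rational functions $g_0$ and $g_1$ are nowhere vanishing, and then $\psi_1 = g_0^2 Z_0^6 + g_1^2 Z_1^6$ together with $\psi_6 = Z_0^3 Z_1^3$ plainly have no common zero on any $\PP^1$-fiber. On $\tilde{P}_{\tilde O}$ the formulas for $\chi_i|_{t=0}$ include $A Z_1^6$ and $Z_0'^6$, which share no common zero on $\PP^1$; the $\iota$-symmetry (\ref{involution}) handles $\tilde{P}_C$ by the same argument.

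\textbf{Ampleness.} Base-point-freeness already gives nefness, so by Nakai--Moishezon it suffices to verify $\mathscr{D}^2 > 0$ and $\mathscr{D}\cdot C > 0$ for every irreducible curve $C$ on $\symE$. Standard ruled-surface computations yield $D_0 \cdot G_0 = 1$, $G_0^2 = 0$ and $K_\symE \equiv -2D_0 + G_0$, and then adjunction on the elliptic curve $D_0\cong E$ forces $D_0^2 = 1$; hence $\mathscr{D}^2 = 12$. Any irreducible $C$ is numerically of the form $aD_0 + bG_0$ with $a = C \cdot G_0 \geq 0$: a fiber gives $\mathscr{D}\cdot C = 6$, while a hypothetical curve with $a \geq 1$ and $\mathscr{D}\cdot C = 4a+6b = 0$ would force $(a,b) = k(3,-2)$ and so $C^2 = -3k^2 < 0$. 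This is impossible on the ruled surface $\Proj(\text{Sym}(\mathcal{V}'))$ with $\mathcal{V}'$ indecomposable of degree $1$ on an elliptic curve, since no irreducible curve there has negative self-intersection (cf.\ Hartshorne V.2).

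\textbf{Failure of very ampleness.} The hard step is to locate a point at which the morphism $\phi \colon \symE \to \PP^6$ defined by $|\mathscr{D}|$ fails to be an immersion. My plan is to look at the point $\tilde p \in \tilde{P}_{\tilde O}$ given in local coordinates by $(t, Z_0', Z_1) = (0, 1, 0)$; since $\Phi$ is \'etale, it is enough to show that $\tilde\phi := \phi \circ \Phi$ is not an immersion at $\tilde p$. Reading off the expansions, $\chi_1(\tilde p) = 1$ while $\chi_i(\tilde p) = 0$ for every $i\neq 1$, so one may compute the Jacobian of $\tilde\phi$ in the affine chart $\chi_1 \neq 0$ using the local coordinates $(t, Z_1)$. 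A term-by-term inspection of the listed $\chi_i$ then shows that for each $i \neq 1$ both first partials $\partial\chi_i/\partial t$ and $\partial\chi_i/\partial Z_1$ vanish at $\tilde p$, with the sole exception $\partial\chi_3/\partial Z_1\big|_{\tilde p} = 1$: the $O(t^2)$-remainders contribute nothing at $t=0$, and the leading polynomial parts in $Z_1$ either vanish to sufficiently high order at $Z_1 = 0$ or carry a factor of $t$. Consequently $d\tilde\phi|_{\tilde p}$ has rank $1$ rather than $2$, so $d\phi$ at $\Phi(\tilde p) \in G_0 \subseteq \symE$ also has rank $1$, and $|\mathscr{D}|$ cannot be very ample.
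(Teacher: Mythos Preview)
Your proof is correct and, for the key step (failure of very ampleness), follows essentially the same strategy as the paper: exhibit a point on the fiber $\tilde P_{\tilde O}$ at which the differential of $\tilde P \to \PP^6$ drops to rank~$1$. The only cosmetic difference is the choice of point: you work at $(Z_0':Z_1)=(1:0)$ in the chart $\chi_1\neq 0$, while the paper works at $(Z_0':Z_1)=(0:1)$ in the chart $\chi_0\neq 0$; in both cases it is the $\partial/\partial t$-column of the Jacobian that vanishes identically. For base-point-freeness and ampleness the paper simply cites \cite[Theorem~1.18, Prop.~1.14]{CC}, whereas you supply self-contained arguments (common zeros of the explicit $\psi_i$, and Nakai--Moishezon together with the fact that the ruled surface $\symE$, having invariant $e=-1$ over an elliptic curve, carries no curve of negative self-intersection). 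These direct arguments are a nice bonus and make the proposition independent of \cite{CC}.
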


\begin{proof}
The fact that $\Dsys$ is base point free is proved in \cite[Theorem 1.18]{CC}, while ampleness is proved in \cite[Prop.\ 1.14]{CC}.  To show that $\Dsys$ is not very ample, it suffices to show that 
the induced morphism $\symE\to \PP H^0(\symE,\mathcal L)^\vee$ does not have an injective differential everywhere.  Equivalently, we may show the induced morphism $\tilde P\to \PP (H^0(\tilde P,\Phi^\ast\mathcal L)^{\iota})^\vee\simeq \PP^6$ does not have injective differential at one point, and for this we consider the point $(Z_0':Z_1)=(0:1)$ in the fiber $\tilde P_{\tilde 0}$.  At this point of $\tilde P$, the pair $(t,Z)$ gives local parameters and $\chi_0$ does not vanish.  Now consider the morphism from $\tilde P$ to $\PP^6$ given by
\[
(\psi_0:\cdots:\psi_6) = (\chi_0:\cdots:\chi_6) = \left( 1: \frac{\chi_1}{\chi_0}: \cdots :\frac{\chi_1}{\chi_0} \right).
\]
One may verify easily that $\frac{\partial }{\partial t}\left(\frac{\chi_j}{\chi_0}\right)\Big\vert_{(t,Z)=(0,0)} = 0$ for each $1\leq j\leq 6$, and thus the $6\times 2$ matrix giving the differential at this point contains a column of zeros.
\end{proof}

As a second application, we consider the problem (in contrast to Theorem \ref{thmB}) of finding surfaces $X$ with $p_g=q=1$ and $K^2=2$ with Picard number $\rho(X)>2$.  If the canonical model of $X$ is not smooth (or equivalently if $K$ is not ample), then $X$ will contain $(-2)$-curves and one will have $\rho(X)>2$.  But it is more difficult to find examples with $\rho(X)>2$ and $K$ ample.  The following proposition gives one construction, and is used in the proof of Theorem \ref{thmC} in \S\ref{tate_sec}.  The idea we use here is analogous to a classical construction of Kummer surfaces having a polarization of degree 2.  There one takes a double cover of $\PP^2$ branched over a configuration of six lines, such that these lines are all tangent to a single conic; then one considers the curves in the double cover that arise from the inverse image of the conic.

\begin{proposition}\label{nongen_picard}
For any elliptic curve $E$ over $k$, there exists a surface $X$ with $p_g=q=1, K^2=2$, $K$ ample, and $\Alb(X)\simeq E$ such that $\rho(X)>2$.
\end{proposition}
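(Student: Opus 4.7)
Following the Kummer analogy mentioned above, my plan is to produce a smooth $B \in \Dsys$ on $\symE$ such that $X_B \to E$ has a reducible Albanese fiber, yielding an extra class in the Picard group. The Albanese fiber of $X_B$ over $P \in E$ is the double cover of $G_P \cong \PP^1$ branched at the degree-$6$ divisor $B \cap G_P$; if these six branch points come in three pairs (i.e.\ $B|_{G_P} = 2\Delta$ on $G_P$), then because $\mathcal O_{\symE}(3D_0 - G_0)|_{G_P} \cong \mathcal O_{\PP^1}(3) \cong \mathcal O_{\PP^1}(\Delta)$ automatically, the double cover splits as two smooth rational curves $F^+, F^-$ meeting at the three zeros of the square root of $B|_{G_P}$. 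Using $F \equiv F^+ + F^-$, $F^+ \cdot F^- = 3$, the symmetry $(F^+)^2 = (F^-)^2$, and adjunction on $F^+ \cong \PP^1$, one computes $(F^+)^2 = -3$, $F \cdot F^+ = 0$, and $K \cdot F^+ = 1$. Combined with $F \cdot K = 2$, these relations preclude $F^+ \in \Span_\Q(K, F)$, so $\rho(X_B) \geq 3$.

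To realize such a $B$ for a given $E$, I would use the explicit equations of this section. Any $E$ admits a $2$-isogeny $\varphi : \tilde E \to E$, giving $\tilde P$ and the double cover $\tilde X(a) \to \tilde P$. By (\ref{away_fibers}), the Albanese fiber of $\tilde X(a)$ over $Q \in \tilde E \setminus \{\tilde O, C\}$ is the hyperelliptic curve $y^2 = \sum_i a_i \psi_i(Q; Z_0 : Z_1)$. The condition that this degree-$6$ polynomial in $(Z_0, Z_1)$ be a perfect square defines a subvariety of codimension at most $3$ in $a \in \PP^6$ for each fixed $Q$, since the squares form the image of the $3$-dimensional $\PP H^0(\PP^1, \mathcal O(3))$ inside $\PP H^0(\PP^1, \mathcal O(6)) = \PP^6$. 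Letting $Q$ range over the $1$-dimensional curve $\tilde E$, we obtain a locus in $\PP^6$ of codimension at most $2$. Since smoothness of $B(a)$ is a dense open condition on $\PP^6$ (Theorem \ref{class}), these loci intersect, producing $a$ with $B(a)$ smooth (hence $K$ ample on $X(a)$) and a reducible fiber of $\tilde X(a) \to \tilde E$. Since $\tilde X(a) \to X(a) = \tilde X(a)/\iota$ is \'etale and $\iota$ exchanges the fibers over $Q$ and $Q + C$, the Albanese fiber of $X(a)$ over $\varphi(Q) \in E$ inherits the same splitting.

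The main obstacle is verifying the dimension count is non-degenerate: one must check that the linear restriction map $H^0(\symE, \mathcal O_\symE(\mathscr D)) \to H^0(G_P, \mathcal O_{\PP^1}(6))$ has image meeting the variety of squares in more than just the zero section, and that this intersection moves as $P$ varies. Both can be addressed directly from the basis $\Psi_0, \ldots, \Psi_6$ of Definition \ref{Psi_defn} by restricting to a specific fiber and carrying out a polynomial computation. Alternatively, one could exhibit explicit $a \in \PP^6$ satisfying both conditions using computer algebra, in the spirit of the constructive arguments used for Theorem \ref{thmA}.
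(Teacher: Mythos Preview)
Your proposal takes a genuinely different route from the paper. Both aim to produce a smooth $B \in \Dsys$ for which the preimage in $X_B$ of some curve on $\symE$ becomes reducible; you take this curve to be an Albanese fiber $G_P \cong \PP^1$, whereas the paper takes it to be the section $D_0 \cong E$. Your intersection-number computation for the split fiber $F = F^+ + F^-$ (with $(F^+)^2 = -3$, $K\cdot F^+ = 1$, $F\cdot F^+ = 0$) is correct and does force $\rho(X_B)\geq 3$ once such a $B$ exists.

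The real gap is in the existence step, and it is more serious than the ``non-degeneracy'' issue you flag. Your dimension count shows that the locus of $a$ with $B(a)\vert_{G_P}$ a perfect square (for some $P$) has codimension at most $2$ in $\PP^6$, and you then write that since smoothness is a dense open condition ``these loci intersect.'' But a codimension-$2$ subvariety can perfectly well be contained in the codimension-$1$ discriminant $R \subseteq \Dsys$, so nothing in the argument as written rules this out. You do acknowledge that something must be checked and propose a computer verification, but without it the proof is incomplete.

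The paper's choice of $D_0$ instead of $G_P$ is exactly what makes the existence step go through without computation. Since $B\cdot D_0 = 4$, imposing tangency to $D_0$ at two prescribed points gives only four linear conditions, leaving a linear subsystem $M \subseteq \Dsys$ of positive projective dimension; after locating the two base points of $M$, Bertini's theorem supplies smooth members directly. In your setup, by contrast, $B\cdot G_P = 6$, so three tangencies along $G_P$ impose six linear conditions, and since the restriction $H^0(\symE,\mathcal O(\mathscr D)) \to H^0(G_P,\mathcal O(6))$ is generically an isomorphism, there is no residual linear system on which to run Bertini. That structural difference is why your approach forces an ad hoc check while the paper's does not.
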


\begin{proof}
Consider the divisor $D_0$ inside $\symE$ and let $B\in \Dsys$.  We note that $D_0\simeq E$, so in particular $D_0$ is smooth everywhere, and that $B\cdot D_0=4$.  Let $c\colon X_B\to \symE$ denote the associated double cover, and put $d=c^\ast D_0$ and $f=c^\ast F_0$.  We have $d^2=d.f=2$ and $f^2=0$.

Now suppose we can find such $B$ so that (i) $B$ is smooth and (ii) $B$ is tangent to $D_0$ at precisely two points.  Then by (i) $X_B$ will have ample canonical class $K$ and by (ii) we will have $\pi^\ast D_0 = C_1+C_2$ for two curves with $C_1.C_2=2$, $C_i^2=-1$, and $C_i.d=C_i.f=1$.  The Gram matrix of the three divisors $d,f,C_1$ will be nondegenerate, showing that their classes generate a rank 3 subgroup of the Neron-Severi group of $X$, and so the result will follow.

The rest of the proof will show that such $B$ exist, using the equations developed in this section.  First we have $h^0(\symE,\mathcal O_{\symE}(D_0))=1$ by \cite[Theorem 1.17]{CC}.  Thus the double cover $\tilde D_0$ of $D_0$ inside $\tilde P$ is the vanishing locus of a generator of the space $H^0(\tilde P,\mathcal O_{\tilde P}(1))^{\iota}$, for which we may take the section $Z_0+Z_1$.  Now choose a point $Q=(x_0,y_0)\in\tilde E$ that is not 2-torsion, so that $-Q=(x_0,-y_0)$ is a distinct point.  We consider the linear subsystem $M\subseteq |\Phi^\ast \mathscr D|$ consisting of those elements that are tangent to $\tilde D_0$ at its (unique) points in the fibers $\tilde P_Q$ and $\tilde P_{-Q}$; in the coordinate chart discussed earlier in this section for the open subset $\tilde P\setminus \set{\tilde P_{\tilde 0},\tilde P_C}$, these are the points $(Q; -1:1)$ and $(-Q; -1:1)$.  Assume further that $Q+C\neq -Q$, so that these two points of $\tilde P$ lie over distinct points of $\symE$.  At each of these points of $\tilde P$, we may choose as local parameters $(x-x_0,Z+1)$, where $Z=Z_0/Z_1$.  Using the notation following Definition \ref{Psi_defn}, suppose that $\tilde B(a) \in M$.  Since $\tilde B(a)$ passes through $(Q; -1: 1)$, we have
\[
\sum_{j=0}^6 a_j \psi_j(Q; -1: 1) = 0,
\]
while the tangency condition at this point\footnote{Note that the partial derivative appearing in (\ref{x-deriv}) is to be interpreted as the coefficient $c_{10}$ appearing in the Taylor expansion $\psi_j = c_{00}+c_{10}(x-x_0)+c_{01}(Z+1)+\cdots$ of $\psi_j$ at $(Q; -1 :1)$.
} gives
\begin{equation}\label{x-deriv}
\sum_{j=0}^6 a_j \frac{\partial \psi_j}{\partial x}(Q; -1: 1) = 0.
\end{equation}
Two similar equations result from the tangency of $\tilde B(a)$ to $(-Q; -1: 1)\in \tilde D_0$.  From the equations in Proposition \ref{psi_prop} (and noting in particular that only $\psi_3$ depends on $y$), it follows that $\tilde B(a)\in M$ if and only if $a$ belongs to the kernel of a $4\times 7$ matrix of the form
\[
\begin{bmatrix}
c_0 & c_1 & -c_0 & c_3 & 2 & -c_2 & -1 \\
c_0 & c_1 & -c_0 & -c_3 & 2 & -c_2 & -1 \\
d_0 & d_1 & -d_0 & d_3 & 2 & -d_2 & -1 \\
d_0 & d_1 & -d_0 & -d_3 & 2 & -d_2 & -1 
\end{bmatrix}
\]
for some constants $c_i, d_i$.  As all $4\times 4$ minors of this matrix vanish, its kernel has dimension at least 4.

We will now establish:

\begin{claim}
The only base points of $M$ are $(\pm Q; -1:1)$.
\end{claim}

\begin{proof}
We may easily identify a 3-dimensional projective subspace of $M$: It corresponds to the three independent sections
\begin{eqnarray*}
\psi_0+\psi_2 &=& (Z_0+Z_1)(g_0 Z_0^5+g_1 Z_1^5) \\
\psi_0-\psi_5 &=& (Z_0+Z_1)(Z_0-Z_1)(g_0 Z_0^4-g_1Z_1^4) \\
\psi_4+2\psi_6 &=& (Z_0+Z_1)^2 Z_0^2 Z_1^2,
\end{eqnarray*}
and is the pullback to $\tilde P$ of the 3-dimensional space $|\mathscr D-D_0|$ inside $\Dsys$. Define
\begin{eqnarray*}
s_1 &:=& g_0 Z_0^5+g_1 Z_1^5 \\
s_2 &:=& (Z_0-Z_1)(g_0 Z_0^4-g_1Z_1^4) \\
s_3 &:=& Z_0Z_1,
\end{eqnarray*}
so that
\begin{eqnarray*}
Z(\psi_0+\psi_2) &=& \tilde D_0+Z(s_1) \\
Z(\psi_0-\psi_5) &=& \tilde D_0+Z(s_2) \\
Z(\psi_4+2\psi_6) &=& 2\tilde D_0+2Z(s_3).
\end{eqnarray*}
One may easily verify that, away from the fibers $\tilde P_{\tilde 0}$ and $\tilde P_C$, the only intersection points of $Z(s_1)$ and $Z(s_3)$ are $(\pm Q; -1:1)$.  It follows that if $M$ has any other base points, then some of them must belong to the fiber $\tilde P_{\tilde 0}$.  

Near the fiber $\tilde P_{\tilde 0}$, we recall that $Z_0=t Z_0'$ and thus $\tilde D_0$ has local equation $Z_1+tZ_0'=0$.  Hence the intersection of $\tilde D_0$ with the fiber $\tilde P_{\tilde 0}$ corresponds to the point $(t; Z_0':Z_1) = (0; 1:0)$.  By also using the relation $Z_0=t Z_0'$ and expanding the rational functions $g_0,g_1,h_0,h_1$ in terms of $t$, one may check that the only common intersection point of $Z(s_1),Z(s_2)$ and $Z(s_3)$ inside $\tilde P_{\tilde 0}$ is this same point $(t; Z_0':Z_1) = (0; 1:0)$.  But now let $B\in M$ be any divisor not in the span of $Z(s_1),Z(s_2)$ and $Z(s_3)$; by construction, $B$ only intersects $\tilde D_0$ at two points that are both outside of $\tilde P_{\tilde 0}$.  Hence $M$ has no base points inside $\tilde P_{\tilde 0}$.  This establishes the claim.
\end{proof}

Continuing with the proposition, it follows from Bertini's theorem that a generic element $M$ is smooth away from the two base points $(\pm Q; -1:1)$.  On the other hand, there exist elements of $M$ (such as $Z(\psi_0+\psi_2)$) that are smooth at these base points.  Hence a generic element of $M$ is smooth everywhere.

Taking $\tilde B(a)\in M$ to be generic, we may set $X=X_{B(a)}$ to complete the proof of Proposition \ref{nongen_picard}.
\end{proof}

\begin{remark}\label{period-map}
The existence statement in Proposition \ref{nongen_picard} may be deduced over $\C$ as soon as one knows the variation of Hodge structure $R^2(f_\C)_\ast \Q$ on $\mathcal S_\C$ arising from the family $f:\mathcal X\to\mathcal S$ is nontrivial.  Indeed, in this case a result due independently to Green (see \cite[Prop.\ 5.20]{Voisin2}) and Oguiso \cite{Oguiso} says that the Noether--Lefschetz locus is dense in $\mathcal S_\C$.  This follows (without circularity) from Theorem \ref{thmB}, but would also follow if one were to establish, say, that the differential of the period map was nonzero at some point.  We thank the referee for this observation.
\end{remark}


\section{A Surface with $p_g=q=1$ and $K^2=2$ Having Minimal Picard Number}\label{picard_sec}


\subsection{Zeta functions of reductions modulo $p$.}  First let us make some notation to be used in this section.  Let $p\in\Z$ be a fixed prime and suppose that $Z$ is a smooth projective variety defined over $\Q$ that has good reduction at $p$.  In this section, $\bar Z$ will denote the special fiber of some smooth projective model of $Z$ over the local ring $\Z_{(p)}$.  In particular, $\bar Z$ will be a smooth projective variety over $\F_p$.  Next, if $W$ is a smooth projective variety over some field $F_0$ with algebraic closure $F$, and $\ell$ is a prime number different from $\text{char}(F)$, the $\ell$-adic cohomology group $H^i(W_{F},\Q_\ell(j))$ will be denoted as $H^i_\ell(W)(j)$ (or just $H^i_\ell(W)$ when $j=0$).

Suppose that $\tilde E_1\to E_1$ is a 2-isogeny of elliptic curves that is defined over $\Q$.  Also suppose that we have fixed $a_1 \in \PP^6(\Q)$ such that the curve $B(a_1)\subseteq E_1^{(2)}$ is smooth.  To ease notation, let us put
\begin{eqnarray*}
B_1 &:=& B(a_1) \\
X_1 &:=& X(a_1) \\
\tilde B_1 &:=& \tilde B(a_1) \\
Y_1 &:=& \tilde X(a_1)
\end{eqnarray*}
Then we may consider the reductions of the data $E_1,\tilde E_1,\varphi, \tilde B_1$ to $\F_p$; we suppose further that all of these objects have good reduction to $\F_p$.\footnote{The integral models we use in our calculations for this section (see \cite{LyData}) simply come from reducing the equations in \S\ref{double_sec} modulo $p$.  As one observes there, the equations have integer coefficients, with the possible exception of the quantities $\alpha$ and $A$ (see the definitions right above Proposition \ref{psi_prop}).  Hence by choosing $\tilde E$ to be the curve $y^2=c(x)$, where the cubic $c(x)$ has integer coefficients and an integer root $\alpha$, it becomes possible to work entirely with polynomials over $\Z$ (perhaps after clearing some factors of $(x-\alpha)$ in denominators that arise from the rational functions $g_1$ and $h_1$).  We may then consider these as polynomials over $\Z_{(p)}$ and $\F_p$.}

We now discuss the zeta functions of the surfaces $\bar X_1$ and $\bar Y_1$; see \cite{Katz} for a general reference. Let $\Fr \in \Gal(\bar \F_p/\F_p)$ denote the geometric Frobenius automorphism.  The zeta function of the smooth projective surface $\bar X_1$ is then
\[
Z(\bar X_1; t) = \frac{P_1(t)P_3(t)}{P_0(t)P_2(t)P_4(t)},
\]
where
\[
Z_i(t) = \det\big(1-t\cdot \Fr^\ast \vert {H^i_\ell(\bar X_1)} \big)
\]
for a prime $\ell\neq p$.   One has $P_0(t) = 1-t$, $P_4(t)=1-p^2t$.  Moreover, the Albanese morphism $\bar X_1\to \bar E_1$ induces an isomorphism of $\Gal(\bar \F_p/\F_p)$-modules $H^1_\ell(\bar E_1)\to H^1_\ell(\bar X_1)$; hence, $P_1(t)$ may be calculated from computing the zeta function $Z(\bar E_1,t)$.  Moreover, Poincar\'e duality gives $P_3(t) = P_1(pt)$.

In light of the preceding comments, the bulk of the work in determining $Z(\bar X_1,t)$ centers upon the computation of the factor $P_2(t)$.  First we recall certain properties that follow from the general theory of $\ell$-adic cohomology and the Weil Conjectures.  Since the 2nd Betti number of $X_1$ is 12, one knows that $P_2(t)\in \Z[t]$ is a polynomial of degree 12, which is typically written in the form
\[
P_2(t) = \prod_{i=1}^{12}(1-\alpha_i t)
\]
for inverse roots $\alpha_i$ that all have complex absolute value $\abs{\alpha_i}=p$ (for any complex embedding).  Any divisor $D$ on $\bar X_1$ has a cycle class $[D]\in H^2_\ell (\bar X_1)(1)$; if $D$ is defined over the finite extension $K/\F_p$, then the class $[D]$ is fixed by the subgroup $\Gal(\bar \F_p/K)$.  In particular, if we look at the subspace of $H^2_\ell(\bar X_1)(1)$ generated by the Neron-Severi group of $\bar X_1\otimes_{F_p} \bar \F_p$, then the automorphism $\Fr\in \Gal(\bar \F_p/\F_p)$ preserves this subspace and the eigenvalues of its action upon this subspace are all roots of unity.  (The Tate Conjecture for $\bar X_1$ over $\F_p$ predicts that this characterizes all such classes $[D]$, but this characteristic $p$ statement is still unknown, and in any case will not be needed here.)  Transferring this back to the untwisted group $H^2_\ell(\bar X_i)$, we obtain the following upper bound for the geometric Picard number of $\bar X_1$ by looking at the inverse roots $\alpha_i$ of $P_2(t)$:
\begin{equation}\label{pic_bound}
\rho(\bar X_1) \leq \#\set{\alpha_i \ : \ \frac{\alpha_i}p \text{ is a root of unity} }.
\end{equation}

In $H^2_\ell(\bar X_1)(1)$, the subspace generated by the cycle classes of the canonical divisor and an Albanese fiber is a two-dimensional subspace that is fixed by $\Fr$.  Indeed, this subspace is the same as the image of $H^2_\ell(\bar E_1^{(2)})(1)\hookrightarrow H^2_\ell(\bar X_1)(1)$, which is an $\Fr$-equivariant injection, and $H^2_\ell(\bar E_1)(1)$ is generated by the classes of a fiber and a section of the Abel-Jacobi map $\bar E_1^{(2)}\to \bar E_1$.  Hence we may write
\[
P_2(t) = (1-pt)^2\cdot \prod_{i=3}^{12}(1-\alpha_i t) = (1-pt)^2Q(t),
\]
where we note that $Q(t)\in\Z[t]$.

Now we will consider the unramified double cover $\bar Y_1$ of $\bar X_1$.  Looking first in characteristic zero, we note that $e(Y_1)=2e(X_1)=20$, $K_{Y_1}^2=2K_{X_1}^2=4$, and thus by Noether's formula $\chi(Y_1)=2\chi(X_1)=2$.  Moreover, from Lemma \ref{alb_lemma}, we have $q(Y_1)=1$ and thus $h^{0,2}(Y_1)=h^{0,2}(Y_1)=2, h^{1,1}(Y_1)=18$.  Since $\tilde E_1$ is isogenous to $E_1$, we have the following $\Gal(\bar \F_p/\F_p)$-module isomorphisms:
\[
H^1_\ell(\bar Y_1) \tilde\To H^1_\ell(\bar{\tilde E}_1) \tilde\To H^1_\ell(\bar E_1) \tilde\To H^1_\ell(\bar X_1).
\]
This implies that
\[
Z(\bar Y_1,t) = \frac{P_1(t)P_3(t)}{P_0(t)\tilde P_2(t)P_4(t)},
\]
where $P_0(t),P_1(t),P_3(t),P_4(t)$ are the polynomials appearing in $Z(\bar X_1,t)$ and where $\tilde P_2(t)$ is a polynomial of degree 22.

But we can say more about $\tilde P_2(t)$.  Looking at middle cohomology, the fact that $X_1$ is the quotient of $Y_1$ by the free involution $\iota$ (which is defined over $\Q$) implies that we may identify the $\Gal(\bar \F_p/\F_p)$-module $H^2_\ell(\bar X_1)$ with the submodule $[H^2_\ell(\bar X_1)]^{\iota}$ of $\iota^\ast$-invariant classes in $H^2_\ell(\bar X_1)$; see \cite[Prop.\ 6.8]{DelMil}. Since
\begin{equation}
P_2(t) = \det\big(1-t\cdot \Fr^\ast \vert {H^2_\ell(\bar X_1)} \big), \qquad \tilde P_2(t) = \det\big(1-t\cdot \Fr^\ast \vert {H^2_\ell(\bar Y_1)} \big),
\end{equation}
it follows that 
\begin{equation}\label{tilde_p2}
\tilde P_2(t) = P_2(t)R(t) = (1-pt)^2Q(t)R(t)
\end{equation}
for a polynomial $R(t)\in \Z[t]$ of degree 10.  The following proposition summarizes the relation between the zeta functions of $\bar X_1$ and $\bar Y_1$:

\begin{proposition}\label{zeta_relation}
Let $E_1, \tilde E_1$, and a $2$-isogeny $\varphi\colon \tilde E_1\to E$ all be defined over $\Q$, and select a point $a_1\in \PP^6(\Q)$.  As in Definition \ref{Psi_defn}, suppose the divisor $B(a_1)\subseteq \symEone$ has only rational double points, let $X_1:=X(a_1)$ be the associated surface with $p_g=q=1$ and $K^2=2$, and let $Y_1:=\tilde X(a_1)$ be the corresponding double cover.  Suppose furthermore that $X_1$ and $Y_1$ have good reductions $\bar X_1$ and $\bar Y_1$ to $\F_p$, respectively, for some prime $p$.  Then the zeta functions of $\bar X_1$ and $\bar Y_1$ over $\F_p$ are related as follows:
\[
Z(\bar Y_1,t) = \frac{Z(\bar X_1,t)}{R(t)}
\]
where $R(t)\in\Z[t]$ is a polynomial of degree 10 whose inverse roots all have complex absolute value $p$.
\end{proposition}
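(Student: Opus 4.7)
The plan is to consolidate the observations made in the paragraphs preceding the proposition into the stated identity. The essential mechanism is that $\bar X_1 = \bar Y_1/\iota$ is a free quotient defined over $\F_p$, so that pullback by the \'etale cover $\bar Y_1 \to \bar X_1$ identifies $H^2_\ell(\bar X_1)$ with the subspace of $\iota^\ast$-invariants in $H^2_\ell(\bar Y_1)$ as $\Gal(\bar\F_p/\F_p)$-modules. All the cohomological degrees other than $2$ will match trivially, and the degree-$2$ contribution will factor through the $\iota^\ast$-eigenspace decomposition.

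First I would compare the standard factorizations
\[
Z(\bar X_1,t) = \frac{P_1(t)P_3(t)}{P_0(t)P_2(t)P_4(t)}, \qquad Z(\bar Y_1,t) = \frac{\tilde P_1(t)\tilde P_3(t)}{\tilde P_0(t)\tilde P_2(t)\tilde P_4(t)},
\]
and verify $\tilde P_i(t) = P_i(t)$ for $i \neq 2$. The cases $i=0,4$ are immediate from dimension considerations. For $i=1$, the Albanese morphisms $\bar X_1 \to \bar E_1$ and $\bar Y_1 \to \bar{\tilde E}_1$ (the latter using Lemma \ref{alb_lemma}) induce $\Gal(\bar\F_p/\F_p)$-equivariant isomorphisms on $H^1_\ell$; since $E_1$ and $\tilde E_1$ are $\Q$-isogenous via $\varphi$, their $H^1_\ell$'s coincide as Galois modules, yielding $\tilde P_1 = P_1$. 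Poincar\'e duality on each surface then forces $\tilde P_3 = P_3$.

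The main step is the identification
\[
H^2_\ell(\bar X_1) \simeq H^2_\ell(\bar Y_1)^{\iota}
\]
of $\Gal(\bar\F_p/\F_p)$-modules. Because $Y_1 \to X_1$ is an \'etale double cover defined over $\Q$ and both surfaces have good reduction at $p$, the reduced morphism remains an \'etale free $\iota$-quotient over $\F_p$; \cite[Prop.\ 6.8]{DelMil} then supplies the identification. Since $(\iota^\ast)^2 = 1$ and $\iota^\ast$ commutes with Frobenius, the $(-1)$-eigenspace $V^- \subseteq H^2_\ell(\bar Y_1)$ is Frobenius-stable, and if $R(t)$ denotes the characteristic polynomial of Frobenius on $V^-$ then $\tilde P_2(t) = P_2(t)R(t)$, which rearranges into the claimed quotient formula $Z(\bar Y_1,t) = Z(\bar X_1,t)/R(t)$.

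Finally I would verify the three properties of $R(t)$. The degree is $b_2(\bar Y_1) - b_2(\bar X_1) = 22 - 12 = 10$. Integrality follows because $P_2(t), \tilde P_2(t) \in \Z[t]$ are both monic and the former divides the latter. The absolute-value statement is the Riemann hypothesis portion of the Weil conjectures applied to $\bar Y_1$, which forces every inverse root of $\tilde P_2(t)$ to have complex absolute value $p$, hence the same for its divisor $R(t)$. The only conceptually nontrivial point is the \'etaleness of the reduced quotient $\bar Y_1 \to \bar X_1$, but this is automatic from the $\Q$-\'etaleness of $Y_1 \to X_1$ combined with the good reduction hypothesis; everything else is bookkeeping from the Lefschetz trace formula.
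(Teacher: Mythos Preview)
Your argument is correct and mirrors the paper's own development almost exactly: the proposition there is stated as a summary of the preceding discussion, which matches your steps (compare $P_i$ for $i\neq 2$ via Albanese, isogeny, and Poincar\'e duality; identify $H^2_\ell(\bar X_1)$ with the $\iota$-invariants via \cite[Prop.~6.8]{DelMil}; read off the properties of $R(t)$). One small wording fix: $P_2$ and $\tilde P_2$ are not monic in the usual sense but have constant term $1$, so the integrality of $R(t)$ follows from $R=\tilde P_2/P_2\in\Z[[t]]\cap\Q[t]$ rather than from a monic-division argument.
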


\subsection{An example.}\label{X1_subsec}  Keeping the notation from the previous section, we make the following choices.  Let $\tilde E_1$ be the elliptic curve
\[
y^2 = (x-1)(x^2+1),
\]
with 2-torsion point $C=(1,0)$.  Thus $E_1=\tilde E_1/\langle C \rangle$ and $\varphi\colon \tilde E_1\to E_1$ is the quotient map, and both of these are defined over $\Q$ as well; explicitly, although we will not need to make use them, we may write them in equations as
\begin{equation}\label{E1_eqn}
E_1\colon \quad Y^2 = X^3-X^2-9X-7
\end{equation}
and
\[
\varphi(x,y) = \left(\frac{x^2 - x + 2}{x - 1}, \frac{y(x^2 - 2 x- 1)}{(x-1)^2}\right).
\]
Let
\[
a_1 := (1 : 0 : 0 : 1 : 0 : -1 : 1) \in \PP^6(\Q),
\]
let $X_1 = X(a_1)$, and $Y_1 = \tilde X(a_1)$.  Finally, let $\bar X_1$ and $\bar Y_1$ be the reduction of these surfaces to the finite field $\F_3$.

\begin{proposition}\label{B1_smooth}
The curve $B_1 = B(a_1)$ in $\symEone$ is smooth and has good reduction to $\F_3$, and thus the canonical divisor of the surface $X_1$ is ample.
\end{proposition}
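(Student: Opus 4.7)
The plan is to pull the question back along the étale quotient map $\Phi\colon \tilde P\to \symEone$ of Section \ref{double_sec}. Since $\varphi\colon \tilde E_1\to E_1$ is a degree-$2$ isogeny, it is étale away from residue characteristic $2$; hence $\Phi$ is étale over $\Z_{(3)}$, and $B_1$ is smooth over $\Q$ (respectively, has smooth reduction modulo $3$) if and only if the same holds for $\tilde B_1 = \Phi^{-1}(B_1)$. Thus it suffices to analyze the divisor $\tilde B_1\subseteq \tilde P$ cut out by
\[
\sum_{i=0}^6 a_i\, \psi_i \;=\; \psi_0 + \psi_3 - \psi_5 + \psi_6,
\]
both over $\Q$ and over $\F_3$.

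I would cover $\tilde P$ by two charts and compute the Jacobian ideal in each. On the open set $U := \tilde P\setminus(\tilde P_{\tilde O}\cup \tilde P_C)$, I would use the Weierstrass coordinates $(x,y)$ on $\tilde E_1$ together with the relative affine coordinate $Z=Z_0/Z_1$, substituting the explicit rational functions $g_0,g_1,h_0,h_1$ into the $\psi_i$. In a neighborhood $V$ of $\tilde P_{\tilde O}$, I would use the local parameter $t=x/y$ and the modified coordinates $(Z_0':Z_1)$, so that the defining equation becomes $\chi_0+\chi_3-\chi_5+\chi_6$. No separate chart at $\tilde P_C$ is needed: each $\psi_i$ is $\iota$-invariant and $\iota$ exchanges $\tilde P_{\tilde O}$ with $\tilde P_C$, so smoothness near $\tilde P_{\tilde O}$ forces smoothness near $\tilde P_C$.

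In each chart I would compute the ideal generated by the defining polynomial together with its partial derivatives in the local coordinates, reduce modulo $3$, and check via a Gr\"obner basis calculation (for instance in \textsc{Magma} or \textsc{Singular}) that the resulting ideal has no common zeros in the $\F_3$-fiber of $\tilde P$. This establishes that $\bar{\tilde B}_1$ is smooth over $\F_3$, giving the good-reduction assertion. Since the defining equation does not vanish identically on either fiber, $\tilde B_1\subseteq \tilde P$ is a Cartier divisor in the smooth $\Z_{(3)}$-scheme $\tilde P$ and hence flat over $\Z_{(3)}$; openness of smoothness then transports smoothness of the special fiber to the generic fiber, so $\tilde B_1$, and therefore $B_1$, is also smooth over $\Q$. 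Ampleness of $K$ on $X_1$ now follows from Theorem \ref{class}. The main obstacle is entirely computational---coordinating the two local equations and running the Jacobian ideal calculations---but these are finite mechanical checks, made accessible by the explicit form of the $\psi_i$ and $\chi_i$.
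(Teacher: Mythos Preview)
Your proposal is correct and follows essentially the same route as the paper: pull back along the \'etale map $\Phi$ to $\tilde B_1\subseteq\tilde P$, use the explicit sections $\psi_i$ (and $\chi_i$ near $\tilde P_{\tilde O}$, with the $\iota$-symmetry handling $\tilde P_C$), and verify by a Gr\"obner basis computation that the Jacobian ideal has empty zero locus. The only minor difference is that the paper checks emptiness of $\Sing(\tilde B_1)$ over $\bar\Q$ and $\bar\F_3$ separately, whereas you check only over $\F_3$ and then invoke flatness plus openness of smoothness (together with properness of $\tilde B_1$ over $\Z_{(3)}$, which is what forces the non-smooth locus to have closed---hence empty---image) to get the characteristic-zero statement for free; either way the computational content is identical.
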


\begin{proof}
It suffices to show that the singular locus of the unramified cover $\tilde B(a_1)$ in $\tilde P$ is empty, both over $\bar \Q$ and $\bar \F_3$, and for this one may use the equations from \S\ref{double_sec} in a computer algebra package such as \textsc{Singular}.  See \cite{LyData} for computational details.  

The second statement then follows from Theorem \ref{class}.
\end{proof}

From this proposition, we see that $\bar X_1$ and $\bar Y_1$ are both smooth projective surfaces over $\F_3$.  We now consider the zeta function $Z(\bar Y_1,t)$ of $\bar Y_1$ over $\F_3$.  One checks that the elliptic curve $\tilde E_1$ has good reduction to $\F_3$ and that $\#\tilde E_1(\F_3) = 6$.  Thus, by the discussion in the previous subsection, we have
\[
Z(\bar Y_1,t) = \frac{(1-2t+3t^2)(1-6t+27t^2)}{(1-t)\tilde P_2(t) (1-9t)}.
\]
Here $\tilde P_2(t) = (1-3t)^2 S(t)$ is a polynomial $S(t)\in \Z[t]$ of degree 20 whose inverse roots $\beta_i$ all have complex absolute value $3$.  Moreover, by Poincar\'e duality on $H^\ast_\ell(\bar Y_1)$, one knows that the map $\beta_i\mapsto \frac{9}{\beta_i}$ is a permutation of the inverse roots $\beta_i$.  It follows from this that
\[
t^{20} S(1/t) = \eps \cdot 3^{20} S(t/9),
\]
where $\eps=\pm1$.  (Specifically, $\eps =-1$ if and only if $-3$ occurs with odd multiplicity among the $\beta_i$.)  Writing $S(t) = \sum_{k=0}^{20} b_k t^k$, it follows that
\begin{equation}\label{func_eqn}
b_{10+j} = \eps\cdot 9^{j}\cdot b_{10-j}
\end{equation}
In general, this means that $\eps=9b_9/b_{11}$ and thus $S(t)$ is determined by $b_1, b_2, \ldots, b_{11}$.  In special cases, though, one may can infer $\eps$ (and thus $S(t)$) from just $b_1, b_2, \ldots, b_{10}$.  For instance, by (\ref{func_eqn}) one concludes that $\eps =1$ whenever $b_{10}\neq 0$.  When $b_{10}=0$, one may have $\eps=\pm 1$, but one of the two possibilities can possibly be ruled out by using (\ref{tilde_p2}), since $S(t)$ must have a degree 10 factor in $\Q[t]$.

On the other hand, the Lefschetz fixed point formula applied to the automorphism $\Fr$ acting upon $\bar Y_1$ allows one to write
\[
Z(\bar Y_1,t) = \exp \left( \sum_{n=1}^\infty \big(\# \bar Y_1(\F_{3^n}) \big) \frac{t^n}{n} \right),
\]
and from this we obtain
\begin{equation}\label{point_count}
\# \bar Y_1(\F_{3^n}) = \left(\sum_{i=1}^{20} \beta_i^n\right) +(1+3^n)^2-(1+3^n)(\gamma_1^n+\gamma_2^n),
\end{equation}
where $S(t) = \prod_{i=1}^{20}(1-\beta_i t)$ and where $\gamma_1,\gamma_2=1\pm \sqrt{-2}$ are the inverse roots of $P_1(t) = 1-2t+3t^2$.

Let us also note the connection between the coefficients $b_k$ of $S(t)$ and powers of its inverse roots $\beta_i$:  Defining $s_n := \sum_{i=0}^{20} \beta_i^n$, Newton's identities give
\begin{equation}\label{newton}
b_k = -\frac1k\sum_{j=1}^k b_{k-j} s_j.
\end{equation}
Putting this all together, we may calculate the following:

\begin{proposition}\label{zeta_Y}
The zeta function of $\bar Y_1$ over $\F_3$ is
\[
Z(\bar Y_1,t) = \frac{(1-2t+3t^2)(1-6t+27t^2)}{(1-t)\tilde P_2(t) (1-9t)},
\]
where $\tilde P_2(t)$ factors into irreducibles in $\Z[t]$ as
\begin{eqnarray*}
\tilde P_2(t) &=& (1 - 3 t)^2 \cdot (1 + 3 t)^2 \cdot (1 - t + 12 t^2 + 108 t^4 + 972 t^6 -   729 t^7 + 6561 t^8) \cdot  \\
&& (1 - t + 9 t^2 - 45 t^3 + 108 t^4 - 324 t^5 +  972 t^6 - 3645 t^7 + 6561 t^8 - 6561 t^9 + 59049 t^{10})
\end{eqnarray*}
\end{proposition}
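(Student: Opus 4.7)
The plan is to determine the zeta function of $\bar Y_1$ by pinning down the only unknown factor, namely $\tilde P_2(t) = (1-3t)^2 S(t)$ with $S(t) \in \Z[t]$ of degree $20$; the remaining factors $P_0(t), P_1(t), P_3(t), P_4(t)$ are already fixed by the discussion preceding Proposition \ref{zeta_relation}, using $\#\bar{\tilde E}_1(\F_3) = 6$ to give $P_1(t) = 1 - 2t + 3t^2$ and Poincar\'e duality to produce $P_3(t) = P_1(3t)$.

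First I would count $\#\bar Y_1(\F_{3^n})$ for $n = 1, 2, \ldots, 10$. Rather than iterating directly over $\F_{3^n}$-points of the surface, I would exploit the fibration $\bar Y_1 \to \bar{\tilde E}_1$: over each point $Q \in \bar{\tilde E}_1(\F_{3^n}) \setminus \{\tilde O, C\}$ the fiber is the genus-$2$ hyperelliptic curve (\ref{away_fibers}) specialized at $a = a_1$, and over $\tilde O$ and $C$ the fiber is the curve (\ref{O_fiber}). Implementing this in \textsc{Magma} reduces the computation to a loop over $\bar{\tilde E}_1(\F_{3^n})$ together with a hyperelliptic point count in each fiber.

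Next I would convert the sequence of point counts into $S(t)$. Formula (\ref{point_count}) gives the power sums $s_n = \sum_i \beta_i^n$ of the inverse roots of $S(t)$ directly from $\#\bar Y_1(\F_{3^n})$ and the known inverse roots $\gamma_1, \gamma_2 = 1 \pm \sqrt{-2}$ of $P_1(t)$; Newton's identities (\ref{newton}) then recursively yield the coefficients $b_1, \ldots, b_{10}$ of $S(t)$. The functional equation (\ref{func_eqn}) supplies $b_{11}, \ldots, b_{20}$ once the sign $\eps \in \{\pm 1\}$ is fixed: if $b_{10} \neq 0$ then the relation at $j = 0$ forces $\eps = 1$, and otherwise the two candidate polynomials can be distinguished either by computing $\#\bar Y_1(\F_{3^{11}})$ or by checking which candidate is consistent with the divisibility constraint from (\ref{tilde_p2}). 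Finally I would factor $S(t)$ over $\Q[t]$ in \textsc{Magma} and verify that every irreducible factor has all its inverse roots of complex absolute value $3$, as demanded by the Weil conjectures; this verification acts as a useful self-check on the point counts.

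The main obstacle is the computational cost of the point count at $n = 10$: iterating over $\bar{\tilde E}_1(\F_{3^{10}})$ entails handling on the order of $3^{10} \approx 6 \times 10^4$ fiber curves over a degree-$10$ extension of $\F_3$, each demanding a hyperelliptic point count over $\F_{3^{10}}$. I expect this to be tractable using \textsc{Magma}'s optimized routines together with the fibered structure, and the functional equation essentially halves the effective amount of data that must be extracted from the point counts. A secondary subtlety is ruling out the wrong sign $\eps$ when $b_{10}$ happens to vanish, but this is handled either by the extra point count at $n=11$ or by the rationality constraint already built into (\ref{tilde_p2}).
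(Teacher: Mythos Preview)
Your proposal is correct and matches the paper's proof essentially step for step: point counts over $\F_{3^n}$ for $n\le 10$ via the Albanese fibration and equations (\ref{away_fibers}), (\ref{O_fiber}), conversion to $b_1,\ldots,b_{10}$ by (\ref{point_count}) and (\ref{newton}), and then the functional equation (\ref{func_eqn}) to complete $S(t)$. In the actual computation it turns out that $b_{10}=0$, and the paper resolves the sign by your second alternative---checking that the $\eps=-1$ candidate for $S(t)$ fails to have a degree-$10$ factor in $\Q[t]$, contradicting (\ref{tilde_p2})---rather than by an extra point count at $n=11$.
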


\begin{proof}
By the discussion above, one may use the identities (\ref{func_eqn}), (\ref{point_count}), and (\ref{newton}) to determine $S(t)$ completely if one knows the point counts $\#\bar Y_1(\F_{3^n})$ for small values of $n$, and this may be done in \textsc{Magma}.  Specifically, one uses the observation that every element of $\bar Y_1(\F_{3^n})$ lies above exactly one point of $\overline{\tilde E}_1(\F_{3^n})$ via the Albanese fibration $Y_1\to \tilde E_1$.  Thus one has the disjoint union
\[
\bar Y_1(\F_{3^n}) = \coprod_{Q\in \overline{\tilde E}_1( \F_{3^n})} \bar Y_{1,Q}(\F_{3^n}),
\]
where a general fiber $\bar Y_{1,Q}$ is a smooth curve over $\F_{3^n}$ of genus two.  Hence one may first use \textsc{Magma} to generate a list of all points in $\overline{\tilde E}(\F_{3^n})$ and then, using the equations (\ref{away_fibers}) and (\ref{O_fiber}), determine $\#\bar Y_{1,Q}(\F_{3^n})$ for each $Q\in \overline{\tilde E}( \F_{3^n})$.

Putting $S(t)=\sum_{k=0}^{20} b_k t^k$ as above, one may use \textsc{Magma} to determine $\#\bar Y_1(\F_{3^n})$ for $n=1,2,\ldots,10$ and then use (\ref{point_count}) and (\ref{newton}) to obtain the coefficients $b_1,\ldots, b_{10}$.  It turns out that $b_{10}=0$, but upon applying (\ref{func_eqn}) with $\eps=-1$, one obtains an answer for $S(t)$ that does not contain a factor of degree $10$ in $\Q[t]$, violating (\ref{tilde_p2}). Thus we must have $\eps=1$, and applying (\ref{func_eqn}) with this value leads to the polynomial $\tilde P_2(t)=(1-3t)^2S(t)$ as in the statement of the proposition.

See \cite{LyData} for \textsc{Magma} code pertaining to this calculation.
\end{proof}

\begin{theorem}
The zeta function of $\bar X_1$ over $\F_3$ is 
\[
Z_1(\bar X_1,t) = \frac{(1-2t+3t^2)(1-6t+27t^2)}{(1-t)P_2(t) (1-9t)},
\]
where
\[
P_2(t) = (1 - 3 t)^2 (1 - t + 9 t^2 - 45 t^3 + 108 t^4 - 324 t^5 +  972 t^6 - 3645 t^7 + 6561 t^8 - 6561 t^9 + 59049 t^{10}).
\]
\end{theorem}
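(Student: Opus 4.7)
The plan is to apply Proposition \ref{zeta_relation} to the explicit factorization of $\tilde P_2(t)$ from Proposition \ref{zeta_Y}, reducing the claim to a short case analysis. By Proposition \ref{zeta_relation}, $\tilde P_2(t) = P_2(t)R(t)$ in $\Z[t]$ with $\deg P_2 = 12$ and $\deg R = 10$, so $P_2(t)$ is a product of $\Z[t]$-irreducible factors of $\tilde P_2(t)$. The canonical divisor and Albanese fiber classes on $\bar X_1$ contribute a $2$-dimensional $\Fr$-fixed subspace of $H^2_\ell(\bar X_1)(1)$, so $(1-3t)^2$ must divide $P_2(t)$. The remaining $\Z[t]$-irreducible factors of $\tilde P_2(t)$ available have degrees $2, 2, 8, 10$ (namely two copies of $(1+3t)$, the degree-$8$ factor, and the degree-$10$ factor), and a quick enumeration shows that only two sub-multisets sum to degree $10$: Option A, the degree-$10$ irreducible factor alone, or Option B, the factor $(1+3t)^2$ together with the degree-$8$ irreducible factor. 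These are the only candidates for $P_2(t)$.

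To separate the two options I would invoke the Lefschetz trace formula for $\bar X_1$ over $\F_3$. The Albanese morphism yields $H^1_\ell(\bar X_1) \cong H^1_\ell(\bar E_1)$ as Galois modules, and Poincar\'e duality gives $H^3_\ell(\bar X_1) \cong H^1_\ell(\bar X_1)(-1)$; both contributions are therefore read off from the known zeta function of $\bar E_1$, and the trace on $H^4$ is $9$. The only unknown is $\operatorname{tr}(\Fr^\ast | H^2_\ell(\bar X_1))$, which equals the negative of the coefficient of $t$ in $P_2(t)$. Expanding the two candidates shows that Option A gives $P_2(t) = 1 - 7t + \cdots$ and Option B gives $P_2(t) = 1 - t + \cdots$, so the two candidates predict two distinct, explicit integer values for $\#\bar X_1(\F_3)$.

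The crux is then to compute $\#\bar X_1(\F_3)$ directly despite our not possessing equations for $\bar X_1$. For this I would use the presentation $\bar X_1 = \bar Y_1/\iota$, where $\iota$ is the free involution of \S\ref{double_sec}, defined over $\Q$ and hence over $\F_3$. Every $\bar\F_3$-point of $\bar X_1$ lifts to an $\iota$-orbit of size $2$ in $\bar Y_1(\bar\F_3)$, and such an orbit is $\F_3$-rational precisely when it is $\Fr$-stable. Splitting into the two subcases (both points $\F_3$-rational, or $\Fr$-conjugate points related by $\iota$) shows that every such orbit lies inside $\bar Y_1(\F_9)$, yielding
\[
\#\bar X_1(\F_3) = \tfrac12 \#\bar Y_1(\F_3) + \tfrac12 \#\bigl\{\, y \in \bar Y_1(\F_9) \setminus \bar Y_1(\F_3) : \Fr(y) = \iota(y)\,\bigr\}.
\]
Both counts on the right are computable in \textsc{Magma} by iterating over $Q \in \overline{\tilde E}_1(\F_9)$, running through each Albanese fiber using the explicit equations (\ref{away_fibers}) and (\ref{O_fiber}), and testing the $\iota$-action given by (\ref{involution}).

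The main obstacle is entirely conceptual: since $\bar X_1$ has no direct defining equations at our disposal, every invariant we use must be extracted from $\bar Y_1$ together with $\iota$, and the orbit-counting identity above is exactly the mechanism that makes this possible. Because Options A and B differ already at the linear coefficient of $P_2(t)$, a single point count over $\F_3$ is numerically sharp enough to settle the ambiguity; matching the computed value against Option A then yields the stated formula for $P_2(t)$.
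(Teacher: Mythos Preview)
Your approach is essentially identical to the paper's: reduce to the same two candidates for $P_2(t)$ via the factorization of $\tilde P_2(t)$ and the forced factor $(1-3t)^2$, then distinguish them by computing $\#\bar X_1(\F_3)$ as the number of $\Fr$-stable $\iota$-orbits inside $\bar Y_1(\F_9)$, exactly as the paper does (obtaining $9$, hence Option A). One small slip: the leftover irreducible factors have degrees $1,1,8,10$ (two copies of the linear factor $1+3t$), not $2,2,8,10$; your enumeration of sub-multisets summing to $10$ is nonetheless correct.
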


\begin{proof}
By Propositions \ref{zeta_relation} and \ref{zeta_Y}, as well as the identity (\ref{tilde_p2}), we have
\[
Z_1(\bar X_1,t) = \frac{(1-2t+3t^2)(1-6t+27t^2)}{(1-t)P_2(t) (1-9t)},
\]
where either
\[
P_2(t) = (1 - 3 t)^2 (1 - t + 9 t^2 - 45 t^3 + 108 t^4 - 324 t^5 +  972 t^6 - 3645 t^7 + 6561 t^8 - 6561 t^9 + 59049 t^{10})
\]
or
\[
P_2(t) = (1 - 3 t)^2 (1 + 3 t)^2 (1 - t + 12 t^2 + 108 t^4 + 972 t^6 -   729 t^7 + 6561 t^8).
\]
To differentiate between the two possibilities, we may use the connection between $Z_1(\bar X_1,t)$ and point counting on $\bar X_1$.  Specifically, the first possibility predicts that $\#\bar X_1(\F_3) = 9$ and the second possibility predicts that $\#\bar X_1(\F_3)=3$.

To decide which case holds, we may count points on $\bar X_1$ by using the fact that $\bar X_1$ is the quotient of $\bar Y_1$. One reference for basic facts about quotients of varieties by finite groups is \cite[Expos\'e 5]{Groth-SGA1}.  To start with, the fibers of the quotient map are precisely the $\iota$-orbits on $\bar X_1$, so we may write
\[
\bar X_1(\bar \F_{3}) = \bar Y_1(\bar \F_3)/\iota.
\]
Moreover, as $\iota$ and the quotient map are defined over $\F_3$, we may say that
\[
\bar X_1(\F_{3^n}) = \bar X_1(\bar \F_3)^{\Fr^n} = \big(\bar Y_1(\bar \F_3)/\iota\big)^{\Fr^n},
\]
i.e., the $\F_{3^n}$-points of $\bar Y_1$ correspond precisely to the $\Fr^n$-invariant $\iota$-orbits of the $\bar \F_3$-points of $\bar X_1$. To finish this circle of ideas, we note that any $\iota$-orbit consists of two points in $\bar Y_1(\bar \F_3)$, and hence if such an orbit is $\Fr^n$-invariant then the points within this orbit must be $\Fr^{2n}$-invariant.  Thus we may write 
\[
\bar X_1(\F_{3^n}) = \big(\bar Y_1(\F_{3^{2n}})/\iota\big)^{\Fr^n},
\]
and in particular we have
\[
\#\bar X_1(\F_{3}) = \# \big(\bar Y_1(\F_{9})/\iota\big)^{\Fr}.
\]

The calculation of $\Fr$-invariant $\iota$-orbits of $\bar Y_1(\F_9)$ is straightforward, but a few observations can cut down the work (to the point that it may even be done relatively quickly by hand).  Recall that $\iota$ is the lift to $\tilde P$ of the involution $Q\mapsto Q+C$ on $\tilde E_1$.  Over $\tilde E_1\setminus\set{O,C}$, we may denote points using the relative projective coordinates $(Q; Z_0: Z_1)$, where $Q\in\tilde E_1$, and by (\ref{involution}) an $\iota$-orbit in this open subset of $\tilde P$ looks like
\[
\set{ (Q; Z_0: Z_1), (Q+C; Z_1: Z_0)}.
\]
Hence if such an orbit in $\bar Y_1(\F_{9})/\iota$ is $\Fr$-invariant, then either both points belong to $\bar Y_1(\F_3)$ or we have $\Fr(Q)=Q+C$ and $(Z_0^3:Z_1^3) = (Z_1:Z_0)$.  For $Q\in \overline{\tilde E}_1(\F_9)\setminus \{\tilde O,C\}$, this observation narrows down the fibers $\bar Y_{1,Q}$ that we must examine, as well as the types of points on those fibers; one finds $7$ points in $\bar X_1(\F_3)$ in this way.

Finally, we must consider orbits involving points in the fibers $\bar Y_{1,\tilde O}$ and $\bar Y_{1,C}$.  Given a point $\sigma_1\in \bar Y_{1,\tilde O}(\F_9)$, its $\iota$-orbit will have the form $\set{\sigma_1,\sigma_2}$, where $\sigma_2\in \bar Y_{1,C}(\F_9)$.  But since $\Fr$ maps $\bar Y_{1,\tilde O}$ and $\bar Y_{1,C}$ to themselves, the orbit $\set{\sigma_1,\sigma_2}$ is $\Fr$-invariant if and only if $\Fr(\sigma_1)=\sigma_1$.  In other words, the $\Fr$-invariant $\iota$-orbits arising from points in $\bar Y_{1,\tilde O}(\F_9)$ and $\bar Y_{1,C}(\F_9)$ are in bijection with the points of $\bar Y_{1,\tilde O}(\F_3)$; using (\ref{O_fiber}), we find that this gives 2 additional points in $\bar X_1(\F_3)$.

Hence we conclude that $\#\bar X_1(\F_3)=9$, so that the first case holds.
\end{proof}

Theorem \ref{thmA} now follows from:

\begin{corollary}\label{pic_equals_2}
The variety $X_1$, as defined by the choices in this section, is a surface over $\Q$ with $p_g=q=1$ and $K^2=2$ whose Picard number is $\rho(X_1)=2$.
\end{corollary}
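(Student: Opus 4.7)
The plan is to combine the existence statement for $X_1$ (from Proposition \ref{B1_smooth} and Theorem \ref{class}) with an upper bound on $\rho(\bar X_1)$ deduced from the explicit factorization of $P_2(t)$ computed just above, together with the standard specialization inequality $\rho(X_1)\le\rho(\bar X_1)$.

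First I would note that Proposition \ref{B1_smooth} gives that $B_1\subseteq\symEone$ is smooth and has good reduction at $3$; combining with Theorem \ref{class} and Remark \ref{class_remark}(2), this shows that $X_1$ is a surface defined over $\Q$ with $p_g=q=1$, $K^2=2$, $K$ ample, and that its reduction $\bar X_1$ over $\F_3$ is smooth. The canonical class and an Albanese fiber are numerically independent classes, so $\rho(X_1)\ge 2$. By specialization of the N\'{e}ron--Severi group across the good reduction $\Z_{(3)}\to\F_3$, we have $\rho(X_1)\le\rho(\bar X_1)$, so the entire task reduces to showing $\rho(\bar X_1)\le 2$.

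For the upper bound, I would invoke the inequality (\ref{pic_bound}): $\rho(\bar X_1)$ is at most the number of inverse roots $\alpha_i$ of $P_2(t)$ for which $\alpha_i/3$ is a root of unity. By the preceding theorem,
\[
P_2(t) = (1-3t)^2\cdot Q(t),
\]
where $Q(t) = 1 - t + 9 t^2 - 45 t^3 + 108 t^4 - 324 t^5 + 972 t^6 - 3645 t^7 + 6561 t^8 - 6561 t^9 + 59049 t^{10}$ is irreducible in $\Z[t]$. The factor $(1-3t)^2$ contributes exactly $2$ such inverse roots, so it remains to rule out that any inverse root $\alpha$ of $Q(t)$ satisfies $\alpha/3$ being a root of unity.

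This last step is the heart of the argument, and I expect it to be the main (though still routine) obstacle. Since all inverse roots of $Q(t)$ have complex absolute value $3$, the scaled roots $\alpha_i/3$ are algebraic integers of absolute value one in all complex embeddings; by Kronecker's theorem, such an algebraic integer is a root of unity. Thus, if a single $\alpha_i/3$ is a root of unity, then by irreducibility of $Q(t)$ and Galois conjugation all ten scaled roots are primitive $n$-th roots of unity for the same $n$, forcing
\[
Q(t) = (3t)^{10}\,\Phi_n\bigl(1/(3t)\bigr)
\]
for some $n$ with $\phi(n)=10$. The only such $n$ are $n=11$ and $n=22$, and a direct substitution shows that $(3t)^{10}\Phi_{11}(1/(3t))$ and $(3t)^{10}\Phi_{22}(1/(3t))$ both differ from $Q(t)$ already in low-order coefficients. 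Hence no inverse root of $Q(t)$ scales to a root of unity, giving $\rho(\bar X_1)\le 2$ and completing the chain $2\le\rho(X_1)\le\rho(\bar X_1)\le 2$.
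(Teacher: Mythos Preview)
Your overall strategy matches the paper's exactly: use the specialization inequality $\rho(X_1)\le\rho(\bar X_1)$ together with (\ref{pic_bound}) and the explicit $P_2(t)$, and show that the irreducible degree-$10$ factor $Q(t)$ contributes no inverse roots of the form $3\cdot(\text{root of unity})$.

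There is, however, one genuine misstep. You assert that the scaled roots $\alpha_i/3$ ``are algebraic integers of absolute value one in all complex embeddings,'' and then invoke Kronecker. That assertion is false, and if it were true Kronecker would force \emph{all} ten $\alpha_i/3$ to be roots of unity, yielding $\rho(\bar X_1)\ge 12$ rather than $\le 2$. The $\alpha_i$ are algebraic integers, but there is no reason $\alpha_i/3$ should be; indeed, the paper's computation of $P_2(t/3)$ shows that the degree-$10$ factor has non-integer coefficients (for instance $-t/3$), which directly witnesses that the $\alpha_i/3$ are \emph{not} algebraic integers. This is exactly how the paper finishes: it writes out $P_2(t/3)$ and observes that the irreducible degree-$10$ factor ``is clearly not a cyclotomic polynomial,'' the non-integrality of its coefficients being sufficient.

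Fortunately your argument does not actually depend on the Kronecker sentence. The step that follows it --- if a single $\alpha_i/3$ is a root of unity then, by irreducibility of $Q$ and Galois conjugation, all ten are primitive $n$-th roots for the same $n$ with $\varphi(n)=10$, and one checks $n=11,22$ fail --- is correct and self-contained. Simply delete the Kronecker sentence (or, better, replace it with the observation that a cyclotomic polynomial has integer coefficients while the degree-$10$ factor of $P_2(t/3)$ does not) and your proof goes through, agreeing with the paper's.
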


\begin{proof}
By (\ref{pic_bound}), we may conclude that $\rho(\bar X_1)=2$ if the polynomial
\[
P_2(t) = (1 - 3 t)^2 (1 - t + 9 t^2 - 45 t^3 + 108 t^4 - 324 t^5 +  972 t^6 - 3645 t^7 + 6561 t^8 - 6561 t^9 + 59049 t^{10})
\]
has only two inverse roots of the form $3\cdot$(root of unity); this is easily seen by noting that $P_2(t/3)$ decomposes in $\Q[t]$ as
\[
P_2(t/3) = (1 - t)^2 \left( 1 - \frac{t}3 + t^2 - \frac{5}3 t^3 + \frac{4}{3} t^4 - \frac43 t^5 + \frac43 t^6 - \frac53 t^7 + t^8 - \frac13 t^9 + t^{10} \right),
\]
and the irreducible degree 10 factor is clearly not a cyclotomic polynomial.

To finish, we only need to note that the geometric Picard number of a smooth projective variety over $\Q$ is bounded above by the geometric Picard number of its reduction to $\F_p$, for any prime $p$ of good reduction.
\end{proof}


\section{Singular Elements in $\Dsys$}\label{sing_sec}

Let $E$ be any elliptic curve over $k$ and consider the linear system $\Dsys$ on $\symE$.  Since $\Dsys$ is base point free (see Proposition \ref{not_very_ample}), Bertini's Theorem implies that a general element of $\Dsys$ is nonsingular.  Let $R\subseteq \Dsys$ denote the subset of singular elements.  If one chooses an identification of $\Dsys$ with the projective space $\PP^6$, then $R$ may be endowed with the structure of a projective subvariety of $\PP^6$.

\begin{lemma}\label{codim_one}
The subvariety $R$ has codimension one in $\Dsys$.
\end{lemma}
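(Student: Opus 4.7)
The plan is an incidence-variety dimension count on singular members, combined with an explicit example supplied by the equations of Section \ref{double_sec}. That $R\subsetneq \Dsys$ (i.e.\ that the codimension is at least one) follows immediately from Bertini's theorem applied to the base-point-free linear system $\Dsys$ (Proposition \ref{not_very_ample}): a general member is then smooth.

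For the reverse inequality, namely $\dim R \geq 5$, I would consider the incidence variety
\[
I \;=\; \set{(x, B)\in \symE \times \Dsys : x\in \Sing(B)},
\]
with its projections $p_1\colon I\to \symE$ and $p_2\colon I\to\Dsys$. Fixing $x\in\symE$, choosing local algebraic coordinates and a local trivialization of $\mathcal O_\symE(\mathscr D)$ near $x$, the condition $x\in \Sing(B)$ amounts to the simultaneous vanishing at $x$ of the local equation of $B$ together with its two first partial derivatives. These are at most three linear conditions on the $7$-dimensional vector space $H^0(\symE,\mathcal O_\symE(\mathscr D))$, so the fiber $p_1^{-1}(x) \subseteq \Dsys$ has dimension at least $3$. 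Consequently $\dim I \geq 2 + 3 = 5$, and since $p_2(I) = R$, this gives $\dim R \geq 5$ provided that $p_2$ is generically finite, equivalently that some $B\in R$ has only finitely many singular points.

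Such a $B$ is readily produced from the equations in Section \ref{double_sec}. Pick a point $p_0 = (Q_0; Z_0^{(0)}:Z_1^{(0)})\in \tilde P$ outside the fibers $\tilde P_{\tilde O}$ and $\tilde P_C$, and impose on $a\in\PP^6$ the three linear conditions that $\sum_{i=0}^6 a_i \psi_i$ vanish at $p_0$ along with both of its first partial derivatives in local coordinates on $\tilde P$; this cuts out a $3$-dimensional linear subfamily $L\subseteq \PP^6$. For a general $a\in L$, a direct computation with the explicit formulas for the $\psi_i$ from Proposition \ref{psi_prop} (performed by hand, or verified using \textsc{Singular}) should show that $\tilde B(a)$ has a single ordinary double point at $p_0$ and is smooth elsewhere. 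Since $\Phi\colon \tilde P\to \symE$ is \'etale, the corresponding $B(a)$ likewise has a single singular point, and so $p_2^{-1}(B(a))$ is finite. The one technical point is this last verification, ruling out that every singular $B$ has a positive-dimensional singular locus; but the concrete description from Section \ref{double_sec} reduces it to a routine calculation.
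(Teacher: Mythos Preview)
Your incidence-variety approach is sound and will work, but it is genuinely different from the paper's argument, and there is one small logical gap worth flagging.

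\textbf{Comparison with the paper.} The paper avoids any explicit construction. It takes a general pencil $\Pi\subseteq\Dsys$, notes that its base locus is $\mathscr D^2=12$ smooth points, so the total space $T$ is the blow-up of $\symE$ at $12$ points and $e(T)=12$. If $R$ had codimension $\ge 2$, the pencil would miss $R$ entirely and $T(\C)\to\PP^1(\C)$ would be a topological fibration, forcing $e(T)=e(\PP^1)\,e(B)=2\cdot(-10)=-20$; the discrepancy $12\neq -20$ gives the contradiction. This is shorter and requires only the numerical invariants $\mathscr D^2$ and $g(B)$, no computer verification and no example. Your approach, by contrast, front-loads a piece of the explicit computation that the paper postpones to Proposition~\ref{pencil_exists}; this buys you an actual singular member with an isolated singularity, which is extra information, but at the cost of a \textsc{Singular} check that the paper's proof does not need.

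\textbf{The small gap.} Your sentence ``this gives $\dim R\ge 5$ provided that $p_2$ is generically finite, equivalently that some $B\in R$ has only finitely many singular points'' is not quite right as stated: $I$ need not be irreducible, so exhibiting one finite fiber of $p_2$ does not by itself make $p_2$ generically finite on the component carrying the dimension. The fix is to observe that $I$ is locally cut out in the $8$-dimensional $\symE\times\Dsys$ by three equations, so \emph{every} irreducible component of $I$ has dimension $\ge 5$; then a single $B_0$ with finite $\Sing(B_0)$ forces the component through $(x_0,B_0)$ to map generically finitely onto a $5$-dimensional piece of $R$. Also, a minor slip: because the $\psi_i$ are $\iota$-invariant, $\tilde B(a)$ will be singular at both $p_0$ and $\iota(p_0)$, not just at $p_0$; your conclusion about $B(a)$ is nevertheless correct since these map to the same point of $\symE$.
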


\begin{proof}
Choose a general pencil of divisors in $\Dsys$ and let $T\to \PP^1$ denote the total space of this pencil.  As $\Dsys$ is base point free, it follows (by taking a general element $B\in \Dsys$ and applying Bertini's theorem to the trace of $\Dsys$ on $B$) that a general pencil in $\Dsys$ has a smooth base locus $\Gamma$ consisting of $\mathscr D^2=12$ points in $\symE$.  Note that $T$ is the blow-up of $\symE$ along $\Gamma$.  If the lemma were false, then $T(\C)\to \PP^1(\C)$ would be a topological fibration and we would have $e(T) = e(\PP^1)e(B)$, where $B$ is a (smooth) element of the pencil.  However, one checks directly that
\[
e(T) - e(\PP^1)e(B) = 12-2\cdot (-10) = 32 \neq 0.
\]
Hence a general pencil in $\Dsys$ must contain singular elements, showing that $R$ has codimension one in $\Dsys$.
\end{proof}

More can be said about $R$ if one knows the existence of a certain kind of pencil in $\Dsys$:

\begin{proposition}\label{theoretical}
Assume that there exists a linear system $\mathfrak d\subseteq \Dsys$ of (projective) dimension $3$ and a pencil $\Pi\subseteq \mathfrak d$ with the following properties:
\begin{enumerate}
\item\label{32sing} The pencil $\Pi$ contains at least $32$ singular elements and its general element is smooth.
\item\label{isolated} Any singular element in $\Pi$ contains only isolated singularities. 
\item\label{rat_map} At least $32$ singular elements $B\in \Pi$ have the following property: There exists $q\in B\subseteq \symE$ such that $B$ is the only element of $\mathfrak d$ containing $q$ as a singular point.
\end{enumerate}
Then $R$ has only one irreducible component $R_0$ of codimension one in $\Dsys$, which is an irreducible hypersurface of degree $32$, and the singular locus of a general element in $R_0$ is one ordinary double point. 
\end{proposition}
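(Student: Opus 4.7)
The plan uses the pencil $\Pi$ as a test line in $\Dsys \cong \PP^6$, combining the Euler characteristic calculation of Lemma~\ref{codim_one} with properties (\ref{32sing})--(\ref{rat_map}) of $\Pi$ to pin down the structure of the singular members of $\Pi$, force the uniqueness of a codimension-one component of $R$, and compute its degree.

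First, I would apply the Euler characteristic computation of Lemma~\ref{codim_one} directly to $\Pi$. Letting $T_\Pi$ denote the total space of $\Pi$ (the blow-up of $\symE$ along its twelve base points), one has $e(T_\Pi)=12$ and the general fiber has $e(B_{\text{gen}}) = -10$; assuming $T_\Pi$ is smooth, the standard formula $e(T_\Pi) = 2 e(B_{\text{gen}}) + \sum_q \mu_q$ for a family of curves with isolated fiber singularities gives a total Milnor number of $32$. Combined with hypothesis~(\ref{32sing}) (at least $32$ singular fibers) and (\ref{isolated}) (each contributes $\mu_q \geq 1$), this forces exactly $32$ singular elements $B_1,\ldots,B_{32}$ in $\Pi$, each with a unique singular point $q_i$ of Milnor number $1$ --- that is, an ordinary double point.

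Second, to identify and isolate the main component, I would introduce the incidence variety $I \subseteq \Dsys \times \symE$ consisting of pairs $(B,q)$ with $q \in \Sing(B)$. Over the dense open $U \subseteq \symE$ on which the $1$-jet evaluation $H^0(\mathcal{O}_{\symE}(\mathscr{D})) \to J^1_q\mathcal{O}_{\symE}(\mathscr{D})$ is surjective, $I$ is a $\PP^3$-bundle, so its closure $I_0$ is an irreducible component of $I$ of dimension~$5$, and $R_0 := \overline{\pi_1(I_0)}$ is an irreducible subvariety of $R$. Since the local discriminant of an $A_1$-singularity is smooth (a standard consequence of the versal deformation of a node), each $B_i$ is a smooth point of $R$ and so lies on exactly one irreducible component --- necessarily $R_0$. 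If $R$ had another codimension-one component $R'$, then $R'$ would be a hypersurface in $\PP^6$ meeting the line $\Pi$ (as $\Pi$ contains smooth members and hence is not contained in $R'$), but any intersection point must be one of the $B_i$, contradicting that each $B_i$ lies only on $R_0$.

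Third, $\deg R_0 = \Pi \cdot R_0$, which by the classical relation between a pencil's intersection with the discriminant hypersurface and the total Milnor number of its singular fibers equals $32$. Hypothesis~(\ref{rat_map}) plays a dual role here: on the one hand it ensures that $T_\Pi$ is smooth at each $q_i$ (by preventing $q_i$ from being a base point of $\Pi$, via the uniqueness within $\mathfrak{d}$ of the element singular at $q_i$); on the other hand, and equivalently, it gives that $\Pi$ is transverse to $R_0$ at each $B_i$ --- for $T_{B_i} R_0$ is the hyperplane $\{s \in T_{B_i}\Dsys : s(q_i) = 0\}$, so transversality fails exactly when $q_i$ lies in the base locus of $\Pi$. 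Each of the $32$ intersections thus contributes multiplicity one, giving $\deg R_0 = 32$. Finally, the assertion that the general element of $R_0$ has a single ordinary double point as its singular locus follows from the irreducibility of $R_0$ together with the openness of the one-nodal condition: since $R_0$ already contains the $32$ one-nodal curves $B_i$, the one-nodal locus is a dense open subset of $R_0$. The main technical obstacle I anticipate lies precisely in the careful linear-algebra bookkeeping needed to deduce from (\ref{rat_map}) --- a uniqueness statement in $\mathfrak{d}$ --- that $q_i$ cannot be a base point of the sub-pencil $\Pi$, which is the linchpin of the smoothness of $T_\Pi$ and hence of the degree computation.
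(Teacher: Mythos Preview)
Your overall strategy—Milnor-number count on a pencil, incidence variety to identify $R_0$, intersection with the pencil to read off the degree—is sound in spirit, but the argument as written has a genuine gap exactly where you flag it, and that gap cannot be closed from hypotheses (\ref{32sing})--(\ref{rat_map}) alone.

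The problem is your inference that (\ref{rat_map}) forces $q_i$ to avoid the base locus of $\Pi$.  Hypothesis (\ref{rat_map}) says $B_i$ is the unique member of $\mathfrak d$ \emph{singular} at $q_i$; equivalently, the $1$-jet map $H^0(\mathfrak d)\to J^1_{q_i}$ is surjective with one-dimensional kernel.  This tells you $q_i$ is not a base point of $\mathfrak d$, but it places no constraint on whether the particular line $\Pi\subseteq\mathfrak d$ lies in the hyperplane $\{s:s(q_i)=0\}$.  Concretely, one can have $\mathfrak d=\langle s_1,s_2,s_3,s_4\rangle$ with $s_1$ vanishing to order $2$ at $q$, $s_2$ vanishing to order $1$ at $q$, and $s_3,s_4$ nonvanishing at $q$: then $Z(s_1)$ is the unique element of $\mathfrak d$ singular at $q$, yet $q$ is a base point of $\Pi=\langle s_1,s_2\rangle$.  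Without $T_\Pi$ smooth, your step~1 collapses (the formula $\sum\mu_q=e(T_\Pi)-2e(B_{\mathrm{gen}})$ from \cite[Ex.~14.1.5(d)]{Ful} needs a smooth total space), and with it the conclusion that each $B_i$ is one-nodal, on which your steps~2 and~3 both rest.

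The paper sidesteps this by decoupling the two uses of a pencil.  From (\ref{32sing}) and (\ref{rat_map}) it extracts a rational map $\eta\colon\symE\dashrightarrow\mathfrak d\simeq\PP^3$, $q\mapsto(\text{unique }B\in\mathfrak d\text{ singular at }q)$, whose image closure $\Sigma\subseteq\mathfrak d$ is an irreducible surface meeting the line $\Pi$ in at least $32$ points; hence some irreducible codimension-one component $R_0\subseteq R$ satisfies $\deg R_0\ge 32$.  Hypothesis (\ref{isolated}) then guarantees that a general element of any codimension-one component of $R$ has isolated singularities.  Only \emph{after} this does the paper run the Milnor-number count, but on a \emph{general} pencil $\Pi'\subseteq\Dsys$: for such $\Pi'$ the base locus is smooth (Bertini, as in Lemma~\ref{codim_one}), so $T_{\Pi'}$ is smooth, and $\Pi'$ meets only the codimension-one part of $R$, giving $\sum\mu=32$ against at least $\deg R_0\ge 32$ singular members.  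This forces $\deg R_0=32$, uniqueness of $R_0$, and the one-nodal description of its general element—without ever needing control over the base locus of the original $\Pi$.
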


\begin{proof}
By assumptions (\ref{32sing}) and (\ref{rat_map}), there is a rational map $\eta:\symE \dashrightarrow \mathfrak d\simeq \PP^3$ that sends a general point $q\in\symE$ to the unique divisor $B\in\mathfrak d$ that is singular at $q$.  The Zariski-closure of the image of $\eta$ defines an irreducible hypersurface $\Sigma \subseteq \mathfrak d$ such that  $\deg(\Sigma)\geq 32$.  From this it follows that $R$ contains an irreducible component $R_0$ of codimension one in $\Dsys$ such that $\deg(R_0)\geq 32$ and $R_0\cap \mathfrak d = \Sigma$.  Moreover, as all elements of $\Pi\cap R$ have isolated singularities by assumption (\ref{isolated}), the same is true for a general element of any codimension one component of $R$.

Now fix a general pencil $\Pi'\subseteq \Dsys$.  Since $\Pi'$ lies in general position with respect to $R$, it only intersects the codimension one components of $R$, it  has at least 32 singular elements (since $\deg(R_0)\geq 32$), and all of its singular elements have only isolated singularities.  Moreover, arguing as in the proof of Lemma \ref{codim_one}, we may conclude that the base locus of $\Pi'$ is smooth.  Letting $T\to \Pi'\simeq \PP^1$ denote the total space, we deduce that $T$ is smooth and that, if $CP\subseteq T$ denotes the set of critical points and $CV\subseteq \Pi'$ the set of critical values of this map, then
\[
32\leq |CV| \leq |CP| <\infty.
\]
Now for a critical point $q\in CP$ lying over the critical value $B\in CV$, let $\mu_B(q)$ denote the Milnor number of the isolated singularity $q\in B$.  In particular, we have $\mu_B(q)\geq 1$, with equality if and only if $q$ is an ordinary double point of $B$.  But upon applying \cite[Example 14.1.5(d)]{Ful}, we find that
\[
\sum_{q\in CP} \mu_{B}(q) = e(T) - e(\PP^1)e(B) = 12-2\cdot (-10) = 32.
\]
The conclusion is that $\Pi'$ contains exactly 32 singular elements and that the singular locus of each of these is one ordinary double point.  Moreover, as $\deg(R_0)\geq 32$, it follows that $\deg(R_0)=32$ and that $R_0$ is the only irreducible component of codimension one in $R$.  Finally, from the description of the singular elements of $\Pi'$, we find that the singular locus of a general element of $R_0$ will be one ordinary double point.
\end{proof}

\begin{proposition}\label{pencil_exists}
Let $E_1$ be the elliptic curve defined in (\ref{E1_eqn}), let $\mathfrak d$ be the projective subspace of $\Dsys$ on $\symEone$ spanned by (the zero loci of) the four sections $\Psi_0,\Psi_1, s_1,s_2\in H^0(\symEone,\mathcal O_{\symEone}(\mathscr D))$, where
\begin{eqnarray*}
s_1 &:=& \Psi_0 +\Psi_3 - \Psi_5 + \Psi_6 \\
s_2 &:=& \Psi_0 + \Psi_1 + \Psi_2,
\end{eqnarray*}
and let $\Pi\subseteq \mathfrak d$ denote the pencil spanned by $s_1,s_2$.  Then these choices $E_1, \mathfrak d, \Pi$ satisfy assumptions (\ref{32sing}), (\ref{isolated}), (\ref{rat_map}) of Proposition \ref{theoretical}.

Consequently, for the surface $\symEone$, the collection $R\subseteq \Dsys$ of singular elements has only one irreducible component of codimension one, and a general element $B\in R$ has a singular locus of one ordinary double point.
\end{proposition}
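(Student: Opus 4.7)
The plan is to verify the three hypotheses of Proposition~\ref{theoretical} by explicit computation on the unramified double cover $\tilde P$ of $\symEone$, using the equations of Section~\ref{double_sec} and a computer algebra system such as \textsc{Singular}. Since $\Phi\colon\tilde P\to\symEone$ is \'etale, a divisor $B\in\Dsys$ is singular at $q$ if and only if $\tilde B := \Phi^\ast B$ is singular at each of the two points of $\Phi^{-1}(q)$, and similarly for the ``isolated singularity'' condition; so all tests may be carried out on the polynomials $\psi_0,\psi_1,\tilde s_1,\tilde s_2$ that are the pullbacks of $\Psi_0,\Psi_1,s_1,s_2$, together with the local equations near $\tilde P_{\tilde O}$ and $\tilde P_C$ in Section~\ref{double_sec} and the fact that $\iota$ swaps these two fibers.

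First I would handle condition~(\ref{32sing}). Parametrize $\Pi$ by $(\lambda:\mu)\in\PP^1$ and form $f_{(\lambda:\mu)} := \lambda\tilde s_1 + \mu\tilde s_2$ on $\tilde P$. In each of the three standard coordinate charts on $\tilde P$ (away from the special fibers, near $\tilde P_{\tilde O}$, and near $\tilde P_C$), build the Jacobian ideal generated by $f_{(\lambda:\mu)}$ and its partial derivatives with respect to the chart coordinates, and eliminate those coordinates to obtain a discriminant $\Delta(\lambda,\mu)$. A \textsc{Singular} computation should show $\Delta$ is nonzero of degree $32$, proving that the generic member is smooth and that $\Pi$ contains at most $32$ singular fibers; combined with Lemma~\ref{codim_one} and the Euler-characteristic count in the proof of Proposition~\ref{theoretical} (applied to the base-point free part of $\Pi$, whose base locus is verified to be smooth by another \textsc{Singular} check), this forces the count to be exactly $32$.

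Next I would verify condition~(\ref{isolated}) by showing, for each of the $32$ critical values (possibly defined over an extension field), that the Jacobian ideal of $f_{(\lambda_0:\mu_0)}$ is zero-dimensional. Equivalently, one performs a primary decomposition of the total critical scheme inside $\tilde P\times\PP^1$ and checks that its projection to $\PP^1$ is quasi-finite over every singular value. This step is routine once Step~1 is in place.

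The main obstacle is condition~(\ref{rat_map}), which requires that for at least $32$ singular fibers $B\in\Pi$ there is a singular point $q\in B$ such that $B$ is the \emph{unique} element of $\mathfrak{d}$ with a singularity at $q$. Parametrize $\mathfrak d$ by coordinates $(a_0:a_1:b_1:b_2)$ associated with the basis $\Psi_0,\Psi_1,s_1,s_2$, and consider, for each of the $32$ critical values produced in Step~1, a singular point $q$ of $f_{(\lambda_0:\mu_0)}$. The condition ``$B\in\mathfrak d$ is singular at $q$'' translates into a linear system in $(a_0:a_1:b_1:b_2)$ of rank at most $4$, given by the vanishing of the section and its three partial derivatives in local coordinates at $q$. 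What must be shown is that this $4\times 4$ system has rank exactly $3$, so that its projective solution is a single point (namely $B$ itself). Carrying this out for each critical value, in \textsc{Singular} and over the appropriate algebraic extension determined by $\Delta(\lambda,\mu)$, establishes~(\ref{rat_map}); full computational details will be recorded in \cite{LyData}. Once (\ref{32sing})--(\ref{rat_map}) hold, the conclusion of the proposition follows from Proposition~\ref{theoretical}.
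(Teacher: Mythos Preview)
Your overall strategy matches the paper's: pull back to $\tilde P$ via $\Phi$ and verify the three hypotheses of Proposition~\ref{theoretical} by explicit computation on the sections $\psi_0,\psi_1,\tilde s_1,\tilde s_2$. Two points, however, need repair.

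First, your argument for (\ref{32sing}) has a gap. A discriminant $\Delta$ of degree $32$ and the Euler-characteristic identity $\sum_q \mu_q = 32$ each yield \emph{at most} $32$ singular fibers; nothing you have written forces \emph{at least} $32$, since a single fiber could in principle absorb several Milnor numbers. You must also verify that $\Delta$ is squarefree, or equivalently that the critical scheme is reduced. (The appeal to Lemma~\ref{codim_one} does not help: that lemma concerns $R\subseteq\PP^6$, not the specific pencil $\Pi$.) The paper handles this differently, by reducing modulo $7$: it shows the critical ideal $I_7$ has Krull dimension $0$ and $\F_7$-length $64$, and that its elimination to the pencil parameter is generated by a polynomial of degree $32$; the $\iota$-symmetry then forces $32$ distinct critical values with two points each, and semicontinuity transports the lower bound to characteristic zero.

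Second, for (\ref{rat_map}) your plan to pass to the splitting field of $\Delta$ and check a rank condition at each of the $32$ critical points is correct in principle but, as the paper's own footnote warns, the Gr\"obner computations become intractable in characteristic zero at this stage. The paper instead encodes uniqueness as the nonvanishing of some $3\times 3$ minor of the $3\times 4$ matrix whose rows are $(\psi_0,\psi_1,\tilde s_1,\tilde s_2)$ together with their partials in $x$ and in $Z_0$ --- note that $\tilde P$ is a surface, so there are two local parameters, not three, and your ``$4\times 4$ system'' should be $3\times 4$. Letting $J$ be the ideal of $3\times 3$ minors, one then checks $Z(I_7,J_7)=\emptyset$ over $\F_7$ in a single computation, with no field extensions required. (The paper also first verifies, via $Z(I_7,y)=\emptyset$, that no critical point lies over a $2$-torsion point of $\tilde E_1$, so that $(x,Z_0/Z_1)$ genuinely are local parameters there.)
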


\begin{proof}
Recall that $E_1 = \tilde E_1/\langle C \rangle$, where $\tilde E_1$ is the elliptic curve
\[
\tilde E_1\colon \quad y^2 = (x-1)(x^2+1)
\]
and $C=(1,0)$.  Pulling back via the double cover $\Phi: \tilde P\to \symEone$, the linear system $\Phi^\ast \mathfrak d$ is spanned by $\psi_0, \psi_1, \tilde s_1,$ and $\tilde s_2$, where
\begin{eqnarray*}
\tilde s_1 &:=& \psi_0 +\psi_3 - \psi_5 + \psi_6 \\
\tilde s_2 &:=& \psi_0 + \psi_1 + \psi_2,
\end{eqnarray*}
and the pencil $\Phi^\ast \Pi$ is spanned by $\tilde s_1$ and $\tilde s_2$.  To prove the proposition, we will show
\begin{enumerate}
\item[\ref{32sing}'] The pencil $\Phi^\ast \Pi$ contains at least $32$ singular elements.
\item[\ref{isolated}'] Each singular element in $\Phi^\ast \Pi$ contains only isolated singularities.
\item[\ref{rat_map}'] At least $32$ singular elements $\tilde B\in \Phi^\ast\Pi$ have the following property: There exists $\tilde q\in \tilde B\subseteq \symE$ such that $\tilde B$ is the only element of $\Phi^\ast\mathfrak d$ containing $\tilde q$ as a singular point.
\end{enumerate}
This is readily accomplished using the equations in \S\ref{double_sec} and with the help of \textsc{Singular}.  If $U=\tilde P \setminus\set{\tilde P_{\tilde 0}, \tilde P_C,Z(Z_1)}$ and $\Omega = \Phi^\ast \Pi\setminus\set{\tilde s_1}$, then we consider the affine open subset $U\times \Omega \subseteq \tilde P\times \Phi^\ast\Pi$, on which we have convenient coordinates and convenient equations for $\tilde T\cap (U\times \Omega)$, where $\tilde T$ is the total space of $\Phi^\ast \Pi$.  We may first form the ideal defining the subscheme of points in $\tilde T\cap (U\times \Omega)$ that arise as singularities of some element of $\Omega$.  This is an ideal $I$ defined by polynomials with coefficients in $\Q$, and \textsc{Singular} shows that its reduction $I_7$ to $\F_7$ has Krull dimension 0 and $\F_7$-dimension 64.  It follows that $I$ also has Krull dimension 0 and $\Q$-dimension \emph{at least} 64.\footnote{In fact \textsc{Singular} does a reasonable job of handling the Gr\"{o}bner basis calculations for the ideal $I$ entirely in characteristic zero, but the corresponding calculations obtained by reducing modulo a small prime are still much faster.  More importantly, though, the Gr\"{o}bner basis computations further into the proof are too demanding in characteristic zero, so passage to a finite field does seem necessary at some point.}  Moreover, \textsc{Singular} computes that the intersection of $I_7$ with the univariate polynomial ring $\F_7[\Omega]$ is generated by a polynomial of degree 32.  Geometrically, this implies (by appealing to the symmetry of $\tilde P$ under the involution $\iota$) that, in characteristic zero, there are \emph{at least} 32 singular elements in $\Omega$, each possessing 2 singular points in $U$.  After noting Proposition \ref{B1_smooth} gives the smoothness of the element $Z(s_1)=B_1$, items \ref{32sing}' and \ref{isolated}' follow.  

To verify \ref{rat_map}', for each of the points in $Z(I)$ we may examine the subspace of $\Phi^\ast \mathfrak d$ consisting of elements that pass through the point and have a singularity there.  Upon verifying that no point $\tilde q\in Z(I)$ lies over a 2-torsion point of $\tilde E_1$ (by showing in \textsc{Singular} that $Z(I_7,y)=\emptyset$ in characteristic 7), it follows that the functions $x-x(\tilde q)$ and $Z_0/Z_1-(Z_0/Z_1)(\tilde q)$ give local parameters of $\tilde P$ at $\tilde q$.  Thus we form the matrix
\[
\begin{bmatrix}
\psi_0 & \psi_1 & \tilde s_1 & \tilde s_2 \\
{\partial \psi_0}/{\partial x} & {\partial \psi_1}/{\partial x} & {\partial \tilde s_1}/{\partial x} & {\partial \tilde s_2}/{\partial x} \\
{\partial \psi_0}/{\partial Z_0} & {\partial \psi_1}/{\partial Z_0} & {\partial \tilde s_1}/{\partial Z_0} & {\partial \tilde s_2}/{\partial Z_0} 
\end{bmatrix},
\]
and the projectivized kernel of this matrix at a given point $\tilde q$ gives all elements of $\Phi^\ast \mathfrak d$ containing $\tilde q$ in its singular locus. Letting $J$ be the ideal of its $3\times 3$ minors, and $J_7$ its reduction to $\F_7$, we check in \textsc{Singular} that $Z(I_7,J_7) = \emptyset$; hence this matrix has full rank at each point of $Z(I)$ and \ref{rat_map}' follows.
\end{proof}

\begin{remark}
Let us connect the statement in Proposition \ref{pencil_exists} to the discussion following Theorem \ref{thmB} in \S\ref{intro}.  Recall from the proof of Corollary \ref{family} that the family $\mathcal X\to S$ is defined as the collection of smooth fibers within the larger flat family $\mathcal X_0\to\mathcal S_0$.  By construction, $\mathcal S_0$ is a projective bundle over the open modular curve $A$ such that, if $\alpha \in A(k)$ corresponds to the elliptic curve $E$, the fiber $\mathcal S_{0,\alpha}$ becomes identified with the linear system $\Dsys$ on $\symE$.  If $V = \mathcal S_0\setminus \mathcal S$ denotes the locus of singular fibers in $\mathcal X_0\to\mathcal S_0$, then $V\cap \mathcal S_{0,\alpha}$ has codimension one inside $S_{0,\alpha}$ by Lemma \ref{codim_one}.  By Proposition \ref{pencil_exists}, the codimension one components of $V\cap \mathcal S_{0,\alpha}$ form a hypersurface of degree 32 in $\mathcal S_{0,\alpha}$, and for all but perhaps finitely many $\alpha$ this hypersurface is irreducible.  Finally, the same proposition implies that a general element of $V$ has a singular locus of exactly one ordinary double point.

In the next section, we will consider subfamilies of $\mathcal X_0\to\mathcal S_0$ obtained from general pencils inside the linear system $\Dsys$ on $\symEone$; these are the ``one-parameter families'' alluded to in \S1.  While we will only need to work with pencils coming from the particular surface $\symEone$, we note by the previous paragraph that $E_1$ can be replaced with almost any other elliptic curve $E$.
\end{remark}


\section{Picard-Lefschetz Theory and Big Monodromy}\label{PL_mon_sec}

\subsection{}\label{PL_subsec} In this section we apply Picard-Lefschetz theory to certain pencils of surfaces with $p_g=q=1$ and $K^2=2$, in order to deduce Theorem \ref{thmB}.  We pay particular attention to the determination of the spaces of monodromy invariants and vanishing cycles in such pencils, with Theorem \ref{thmA} playing a key role (see the proof of Proposition \ref{dim_I}).

Let $E_1$ denote the elliptic curve defined in (\ref{E1_eqn}) and consider the divisor let $B_1:=B(a_1)\in \Dsys$ on $\symEone$ from \S\ref{X1_subsec}; note that in Proposition \ref{pencil_exists} this was written in the alternate notation $B_1=Z(s_1)$.  By definition, the double cover of $X_1\to \symEone$ from \S\ref{X1_subsec} is branched over $B_1$.  By Proposition \ref{B1_smooth}, $B_1$ is smooth (and hence $X_1$ has ample canonical divisor) and by Corollary \ref{pic_equals_2} we have $\rho(X_1)=2$.  Now fix a general pencil $\Pi\in \Dsys$ that passes through the element $B_1$.  By Proposition \ref{pencil_exists}, $\Pi$ contains exactly 32 singular elements, and the singular locus of each such element is one ordinary double point.  Moreover, arguing as in Lemma \ref{codim_one}, the base locus of $\Pi$ is smooth.  If we let $T\subseteq \symEone\times\Pi$ denote the total space of $\Pi$, it follows that $T$ is smooth and that
\[
\mathcal O_{\symEone\times\Pi}(T)\simeq p_1^\ast\mathcal O_{\symEone}(\mathscr D),
\]
where $p_1:\symEone \times\Pi\to\symEone$ is the first projection.  Let $Y$ be the double cover of $\symEone\times \Pi$ that is branched over $T$ (inside the line bundle associated to the pullback of $\mathcal O_{\symEone}(3D_0-G_0)$ to $\symEone\times\Pi$).  Then $Y\to \Pi\simeq \PP^1$ is a fibration into surfaces; each fiber is the canonical model of a surface with $p_g=q=1$ and $K^2=2$, and the general element is smooth.  Since one of the fibers is isomorphic to the surface $X_1$, we will denote by $b\in \Pi$ a base point such that $Y_b\simeq X_1$.  Finally, there are exactly 32 singular fibers in $Y\to \Pi$, and the singular locus of each one is one ordinary double point. 

It follows that one may apply Picard-Lefschetz theory to the fibration $Y\to\Pi\simeq \PP^1$.  We now describe the relevant features that result from this theory, and refer to \S5 and \S6 of \cite{Lamot} for more details.  Picard-Lefschetz theory defines a subspace of \emph{vanishing cycles} $V\subseteq H_2(Y_b,\Q)$ as follows.  Let $\Pi_{sm}\subseteq\Pi$ denote the locus of smooth fibers, let $D\subseteq \Pi_{sm}$ denote a small open disc such that $b\in\partial D$, and restrict $Y\to\Pi$ to the subfamily $Y_+$ that lies over $\Pi\setminus D$.  Then using the embedding $Y_b\hookrightarrow Y_+$, one defines
\begin{equation}\label{van_def}
V := \ker \left(H_2(Y_b,\Q) \to H_2(Y_+,\Q)\right).
\end{equation}
Let $V^\ast\subseteq H^2(Y_b,\Q(1))$ denote image of $V$ under the isomorphisms
\begin{equation}\label{homology_comp}
H_2(Y_b,\Q) \stackrel{\text{P.D.}}{\To} H^2(Y_b,\Q) \stackrel{\otimes \Q(1)}{\To} H^2(Y_b,\Q(1)),
\end{equation}
where the first map denotes Poincar\'e duality.  The theory furthermore associates to each singular element of $\Pi$ a distinguished nonzero element $\delta_i\in V^\ast$ ($i=1,\ldots,32$) and shows that the collection of all $\delta_i$ generate $V^\ast$.

As $\Pi_{sm}$ is the complement of 32 points in $\Pi\simeq \PP^1$, the fundamental group $\pi_1(\Pi_{sm}(\C),b)$ is generated by the homotopy classes $\gamma_i$ of 32 nonintersecting elementary paths around these points.  (In other words, we choose $\gamma_i$ to have winding number one around the $i$th puncture and winding number zero around the $j$th puncture for $i\neq j$.)  Let $\psi_b$ denote the nondegenerate symmetric bilinear form on $H^2(Y_b,\Q(1))$ coming from the cup product and let
\begin{equation}\label{orig_pencil_mon}
\rho: \pi_1(\Pi_{sm}(\C),b)\to \orth(H^2(Y_b,\Q(1)),\psi_b)
\end{equation}
denote the monodromy representation.  Then the image of $\rho$ is generated by the collection of elements $T_i = \rho(\gamma_i)$.  The \emph{Picard-Lefschetz formula} states:
\begin{equation}\label{PL_form}
T_i(x) = x+\psi_b(x,\delta_i)\delta_i.
\end{equation}
Furthermore, one also has:
\begin{equation} \label{delta_length}
\psi_b(\delta_i,\delta_i)=-2.
\end{equation}

A second subspace of $H^2(Y_b,\Q(1))$ is
\[
I = H^2(Y_b,\Q(1))^{\pi_1(\Pi_{sm}(\C),b)},
\]
the subspace of classes invariant under the monodromy action $\rho$.  One knows that $\dim I\geq 2$, by considering the pullbacks of the divisors $D_0\times \Pi$ and $G_0\times \Pi$  from $\symEone\times \Pi$ to $Y$.  Moreover, it follows from (\ref{PL_form}) that
\begin{equation}\label{V_perp}
I = (V^\ast)^\perp.
\end{equation}

\begin{proposition}\label{dim_I}
One has $\dim I = 2$.  In fact, letting $c\colon Y_b\to \symEone$ denote the double covering branched over $B_1$, we have
\[
I = \IM\left( H^2(\symEone,\Q(1))\stackrel{c^\ast}{\to} H^2(Y_b,\Q(1))\right).
\]
\end{proposition}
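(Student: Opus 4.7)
The plan is to prove both that $\IM(c^\ast)\subseteq I$ and that $\dim I\leq 2$; Theorem~\ref{thmA} will enter through Corollary~\ref{pic_equals_2}, which caps the dimension of the space of algebraic classes on $Y_b$ at $\rho(X_1)=2$.

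The inclusion is straightforward. The divisors $D_0\times\Pi$ and $G_0\times\Pi$ on $\symEone\times\Pi$ pull back to divisors on $Y$ whose restrictions to every smooth fiber $Y_t$ coincide with $c_t^\ast D_0$ and $c_t^\ast G_0$, producing two global sections of $R^2 f_\ast\Q(1)$ on $\Pi_{sm}$ and hence monodromy-invariant classes on $Y_b$. By Remark~\ref{class_remark}(1) these classes agree with the canonical class $K$ and an Albanese fiber $F$ of $Y_b\simeq X_1$, and the Gram matrix $\bigl(\begin{smallmatrix}K^2 & K\cdot F\\ K\cdot F & F^2\end{smallmatrix}\bigr)=\bigl(\begin{smallmatrix}2&2\\2&0\end{smallmatrix}\bigr)$ shows that the restriction of $\psi_b$ to $\IM(c^\ast)$ is nondegenerate. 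So $\IM(c^\ast)\subseteq I$ is a $\psi_b$-nondegenerate $2$-dimensional subspace.

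For the upper bound I would invoke Deligne's theorem on the fixed part of a polarizable variation of Hodge structure to conclude that $I$ is a sub-Hodge structure of $H^2(Y_b,\Q(1))$, with decomposition $I_\C=I^{1,-1}\oplus I^{0,0}\oplus I^{-1,1}$. Complex conjugation forces $\dim I^{1,-1}=\dim I^{-1,1}$, and since $h^{2,0}(Y_b)=p_g(Y_b)=1$, this common dimension is either $0$ or $1$. If it is $0$, then every element of $I$ is a Hodge class; these are algebraic by the Lefschetz $(1,1)$ theorem, and Corollary~\ref{pic_equals_2} gives $\dim I\leq\rho(Y_b)=2$, which together with the first step yields $I=\IM(c^\ast)$.

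The main task, and the indispensable role of Theorem~\ref{thmA}, is to exclude the case $\dim I^{1,-1}=1$. In that case $I_\C$ would contain the entire transcendental part $H^{2,0}(Y_b)\oplus H^{0,2}(Y_b)$ (interpreted inside $H^2(Y_b,\C)(1)$), so by (\ref{V_perp}) we would have $V^\ast=I^\perp\subseteq (H^{2,0}\oplus H^{0,2})^\perp=H^{1,1}(Y_b)\cap H^2(Y_b,\Q(1))$. Lefschetz $(1,1)$ together with Corollary~\ref{pic_equals_2} would then force $V^\ast\subseteq\IM(c^\ast)$, since $\IM(c^\ast)$ already exhausts the $2$-dimensional space of algebraic classes on $Y_b$. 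But $\IM(c^\ast)\subseteq I=(V^\ast)^\perp$ gives $V^\ast\perp\IM(c^\ast)$, placing $V^\ast$ in the trivial radical of $\psi_b|_{\IM(c^\ast)}$, whence $V^\ast=0$. This contradicts (\ref{delta_length}): each vanishing cycle $\delta_i\in V^\ast$ satisfies $\psi_b(\delta_i,\delta_i)=-2\neq 0$ and is therefore nonzero. I expect this final case, which decisively uses the minimality of $\rho(X_1)$ to squeeze $V^\ast$ into an isotropic subspace of the nondegenerate $\IM(c^\ast)$, to be the conceptual heart of the proof; everything else is essentially bookkeeping.
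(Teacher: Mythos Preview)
Your proof is correct and follows essentially the same route as the paper's. Both arguments use Deligne's theorem of the fixed part to make $I$ a sub-Hodge structure, use $h^{2,0}=1$ to force a dichotomy (either $I$ or its complement is purely of type $(1,1)$), and invoke $\rho(Y_b)=2$ from Theorem~\ref{thmA} together with the nonvanishing of $V^\ast$ from (\ref{delta_length}) to rule out the bad case. The only organizational difference is that the paper restricts to the primitive cohomology $P^2(Y_b)=[K]^\perp$ and argues with $I_P=I\cap P^2(Y_b)$ versus $I_P^\perp$, whereas you work directly in $H^2(Y_b,\Q(1))$ with $I$ versus $V^\ast=I^\perp$; the content is the same.
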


\begin{proof}
As the canonical class $K$ on $Y_b$ is ample, let $P^2(Y_b):=[K]^\perp\subseteq H^2(Y_b,\Q(1))$ denote the primitive cohomology of $Y_b$ with respect to the polarization $[K]$.  Then the restriction of $\psi_b$ to $P^2(Y_b)$ is nondegenerate and makes $P^2(Y_b)$ into a polarized Hodge structure with Hodge numbers $h^{1,1}=9, h^{2,0}=h^{0,2}=1$.  Let $I_P = I\cap P^2(Y_b)$.  Since $[K]\in I$, we have $\dim I_P\geq 1$ and it suffices to show $\dim I_P=1$ to yield the first statement.

Note that $V^\ast\subseteq P^2(Y_b)$ by (\ref{V_perp}), and it is a nonzero subspace by (\ref{delta_length}).  Moreover, the orthogonal complement of $V^\ast$ in $P^2(Y_b)$ is $I_P$.  This implies that $I_P\neq P^2(Y_b)$.  Finally, letting $I_P^\perp$ denote the orthogonal complement of $I_P$ in $P^2(Y_b)$, we also have $I_P^\perp\neq 0$; indeed, it contains the nonzero subspace $\Q\delta_i\subseteq I_P^\perp$ by (\ref{PL_form}).

Using Deligne's Theorem of the Fixed Part, $I$ is a Hodge substructure of $H^2(Y_b,\Q)(1)$, and hence the same is true of $I_P$ in $P^2(Y_b)$.  But $P^2(Y_b)$ is polarized, so we have a sum of Hodge structures $P_2(Y_B) = I_P\oplus I_P^\perp$ in which both summands are nonzero and proper; furthermore, $I_P$ contains rational $(1,1)$-classes.  The Hodge numbers of $P^2(Y_b)$ imply that exactly one of $I_P$ and $I_P^\perp$ consists \emph{purely} of $(1,1)$-classes, and the fact that $\rho(Y_b)=2$ from Theorem \ref{thmA} then forces one to conclude that $\dim I_P = 1$.

Now the second statement follows from the fact that $I$ contains the independent classes $[K]=c^\ast[D_0]$ and $[F]=c^\ast[G_0]$ (with $F$ an Albanese fiber).
\end{proof}

\begin{proposition}\label{orthog_decomp}
The cup product form $\psi_b$ is nondegenerate on $V^\ast$ and one has an orthogonal decomposition
\[
H^2(Y_b,\Q(1)) = I\oplus V^\ast.
\]
\end{proposition}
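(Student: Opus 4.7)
The plan is to reduce both assertions to an explicit Gram matrix calculation on the small space $I$, combined with the observation from (\ref{V_perp}) that $I$ and $V^\ast$ are already orthogonal complements of each other with respect to $\psi_b$.

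First, since cup product on $H^2(Y_b,\Q(1))$ is nondegenerate by Poincar\'e duality, for any subspace $W$ one has $\dim W + \dim W^\perp = \dim H^2(Y_b,\Q(1)) = 12$. Applying this to $W = V^\ast$ and combining with (\ref{V_perp}) and Proposition \ref{dim_I} gives $\dim V^\ast = 10$. The same nondegeneracy also yields $(V^\ast)^{\perp\perp} = V^\ast$, so $I^\perp = V^\ast$ inside $H^2(Y_b,\Q(1))$.

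Next I would verify that $\psi_b|_I$ is nondegenerate by direct computation. By the second assertion of Proposition \ref{dim_I}, $I$ is spanned by $c^\ast[D_0]$ and $c^\ast[G_0]$; by Remark \ref{class_remark}(1) these classes are algebraically equivalent to $[K]$ and $[F]$, where $K$ is the canonical divisor of $Y_b$ and $F$ is an Albanese fiber. The Albanese fibers have genus two (as recalled in \S\ref{background_sec}), so adjunction applied to a smooth fiber gives $K\cdot F = 2g-2 = 2$; together with $K^2 = 2$ and $F^2 = 0$, the Gram matrix of $\psi_b$ on $I$ in the basis $\{[K],[F]\}$ is
\[
\begin{pmatrix} 2 & 2 \\ 2 & 0 \end{pmatrix},
\]
which has determinant $-4 \neq 0$.

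From here the proposition follows formally. The nondegeneracy of $\psi_b|_I$ forces $I \cap I^\perp = 0$, i.e.\ $I \cap V^\ast = 0$, and the dimension count then yields the orthogonal direct sum $H^2(Y_b,\Q(1)) = I \oplus V^\ast$; in particular $\psi_b|_{V^\ast}$ is nondegenerate. There is no real obstacle at this stage: the serious input is Proposition \ref{dim_I} (which in turn rests on Theorem \ref{thmA}), and once that dimension is known the proposition is routine linear algebra on the two-dimensional space $I$.
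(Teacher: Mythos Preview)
Your proof is correct and noticeably more direct than the paper's. The paper introduces an auxiliary subspace $W^\ast$, defined as the image under (\ref{homology_comp}) of $\ker\big(c_\ast\colon H_2(Y_b,\Q)\to H_2(\symEone,\Q)\big)$. It first establishes the decomposition $H^2(Y_b,\Q(1)) = I\oplus W^\ast$ using the identity $c_!\circ c^\ast = 2\cdot\mathrm{id}$ on $H^2(\symEone,\Q)$ together with the identification of $I$ with $\IM(c^\ast)$ from Proposition \ref{dim_I}, and then exploits the factorization $Y_b\hookrightarrow Y_+\to \symEone$ of the double cover to compare $W^\ast$ with $V^\ast$ and conclude they agree. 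Your argument bypasses this topological detour entirely: you use only (\ref{V_perp}), the dimension statement of Proposition \ref{dim_I}, and the explicit Gram matrix of $\psi_b$ on $I$ in the basis $\{[K],[F]\}$ (the intersection numbers $K^2=K\cdot F=2$, $F^2=0$ being already recorded in the proof of Proposition \ref{nongen_picard}). What the paper's route buys is the equality $V^\ast=W^\ast$, making explicit that the vanishing cycle space coincides with the cohomology not pulled back from $\symEone$; what your route buys is brevity and no appeal to the geometry of $Y_+$. Both depend equally on Proposition \ref{dim_I}, which is where the substantive input from Theorem \ref{thmA} enters.
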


\begin{proof}
Define a subspace $W\subseteq H_2(Y_b,\Q(1))$ as
\[
W = \ker\left(H_2(Y_b,\Q)\stackrel{c_\ast}{\To} H_2(\symEone,\Q)\right),
\]
where the map on homology is induced by the double covering map $c:Y_b\to \symEone$.   Letting $W^\ast\subseteq H^2(Y_b,\Q(1))$ denote the image of $W$ under the composition (\ref{homology_comp}), we first claim that there is a direct sum decomposition
\[
H^2(Y_b,\Q(1)) = I\oplus W^\ast.
\]

To see this, note that an equivalent definition of $W^\ast$ is
\[
W^\ast = \ker\left(H^2(Y_b,\Q(1))\stackrel{c_!}{\To} H^2(\symEone,\Q(1))\right),
\]
where $c_!$ is the Gysin map obtained by pre- and post-composing $c_\ast$ with the relevant Poincar\'e duality isomorphisms.  Since $c$ is finite of degree 2, the composition
\[
c_!\circ c^\ast : H^2(\symEone,\Q) \to H^2(\symEone,\Q)
\]
is simply multiplication by 2.  Since $c^\ast$ is injective with image $I$ by Proposition \ref{dim_I}, one has
\[
\dim H^2(Y_b,\Q) = \dim \IM(c_!) + \dim \ker (c_!) = \dim I +\dim W^\ast
\]
and that $I\cap W^\ast =0$.  Thus $H^2(Y_b,\Q) = I\oplus W^\ast$.

Now the double covering $Y_b\to\symEone$ factors as
\[
Y_b \hookrightarrow Y_+ \to \symEone\times (\Pi\setminus D) \to \symEone
\]
(recall that the closed subset $\Pi\setminus D \subseteq \Pi$ is the base space of the subfamily $Y_+$), so by (\ref{van_def}) one has the inclusions $V\subseteq W$ and, by duality, $W^\ast\subseteq V^\ast$.  We conclude that
\[
H^2(Y_b,\Q(1)) = I\oplus W^\ast \subseteq I\oplus V^\ast \subseteq H^2(Y_b,\Q(1)),
\]
showing the claimed direct sum decomposition; moreover, it is an orthogonal sum by (\ref{V_perp}).  It also shows that $V^\ast \cap (V^\ast)^\perp = V^\ast\cap I = 0$, and hence $\psi_b$ is nondegenerate on $V^\ast$.
\end{proof}


\subsection{}\label{mon_subsec} Recall from Corollary \ref{family} the family $f\colon\mathcal X\to \mathcal S$ over $\Q$ containing all surfaces with $p_g=q=1, K^2=2$, and $K$ ample.  By construction, the base space $\mathcal S$ is an open subset of a $\PP^6$-bundle $\mathcal S_0$ over an open modular curve $A$; for a point $\alpha\in A(\Q)$ on the modular curve corresponding to the elliptic curve $E_1$ (with some level structure), the fiber $\mathcal S_{0,\alpha}$ may be identified with the linear system $\Dsys$ on $\symEone$, while the fiber $\mathcal S_\alpha$ is identified with the open subset $\Dsys\setminus R$ of smooth elements in $\Dsys$.  Furthermore, if $s\in \mathcal S$ (lying over $\alpha\in A$) corresponds to the smooth element $B\in \Dsys$, then $\mathcal X_s$ is the double cover of $\symE$ branched over $B$.  

Now consider the local system $\mathbb H:= R^2 f_\ast \Z(1)/(\text{torsion})$ of  free abelian groups of rank 12 on the complex manifold $\mathcal S(\C)$.  At a base point $s\in \mathcal S(\C)$ we have $\mathbb H_s \simeq H^2(\mathcal X_s,\Z(1))/(\text{torsion})$, and in fact the local system $\mathbb H$ is equivalent to a linear action of $\pi_1(\mathcal S(\C), s)$ on the free abelian group $\mathbb H_s$.  A priori, this is equivalent to a homomorphism
\begin{equation}\label{orig_mon_rep}
r\colon \pi_1(\mathcal S(\C),s)\to \Aut(\mathbb H_s) \simeq \GL_{12}(\Z),
\end{equation}
but we may refine this further. 

The group $H^0(\mathcal S,\mathbb H)$ of global sections of $\mathbb H$ contains a rank 2 subgroup $\mathbb I$; its fiber $\mathbb I_s$ corresponds to the subgroup of $\mathbb H_s$ generated by the canonical and Albanese classes on $\mathcal X_s$.  We may view $\mathbb I$ as a constant local subsystem of $\mathbb H$, which in terms of the monodromy action means that $\pi_1(\mathcal S(\C), s)$ fixes the subspace $\mathbb I_s$ inside $\mathbb H_s$.  Let $\psi\colon \mathbb H\otimes\mathbb H \to \Z(2)$ denote the morphism of local systems obtained by the cup product, which corresponds to the cup product form $\psi_s$ on $\mathbb H_s$ at a fiber.  Since the monodromy action of $\pi_1(\mathcal S(\C), s)$ respects $\psi_s$, we may define the local subsystem $\mathbb V:= \mathbb I^\perp$ to be the orthogonal complement of $\mathbb I$ in $\mathbb H$.\footnote{Note the slight change in notation from that used in the discussion preceding Theorem \ref{thmB} in \S\ref{intro}.  Here we are using $\mathbb V$ to denote a local system of free abelian groups, and its associated local system of $\Q$-spaces $\mathbb V\otimes_\Z \Q$ is what was denoted by $\mathbb V$ in \S1.}  Let us identify $\Aut(\mathbb V_s)$ with the subgroup of $\Aut(\mathbb H_s)$ of those transformations which fix $\mathbb I_s$ and preserve $\mathbb V_s$.  Then from this discussion, it follows that we may write the monodromy representation of $\mathbb H$ not in the form (\ref{orig_mon_rep}) but as
\begin{equation}
r\colon \pi_1(\mathcal S(\C),s)\to \Aut(\mathbb V_s) \cap \orth(\mathbb V_{s,\Q},\psi_s),
\end{equation}
where $\mathbb V_{s,\Q} = \mathbb V_s\otimes_\Z \Q$. The assertion of Theorem \ref{thmB} is that we cannot find a smaller $\Q$-algebraic subgroup of $\orth(\mathbb V_{s,\Q},\psi_s)$ containing the image of $r$, and in this sense the monodromy of $\mathbb H$ (or perhaps more appropriately of $\mathbb V$) is ``big''. 

Also note that the local system $\mathbb H_s$ underlies the variation of Hodge structure $R^2 f_\ast \Z(1)\otimes_\Z \mathcal O_{S}$, which has Hodge numbers $h^{-1,1}=h^{1,-1}=1$, $h^{0,0}=10$, and $h^{p,q}=0$ otherwise.  The subsystem $\mathbb I$ contains the (global) class which restricts to the canonical class on each fiber, which is ample everywhere by definition of $f\colon\mathcal X\to \mathcal S$.  It follows that the orthogonal complement $\mathbb V$ over $\mathcal S(\C)$ underlies a \emph{polarized} variation of Hodge structure (under the restriction of $\psi$) with Hodge numbers and $h^{-1,1}=h^{1,-1}=1$ and $h^{0,0}=8$.

To prove Theorem \ref{thmB}, we will focus on a particular subfamily of $f\colon\mathcal X\to\mathcal S$.  Recall the pencil $\Pi\subseteq \Dsys$ on $\symEone$, and the associated family of $Y\to \Pi$ of surfaces that was studied in \S\ref{PL_subsec}.  Let $g:Y_{sm}\to \Pi_{sm}$ denote the pullback of $Y\to \Pi$ to $\Pi_{sm}$.  By the identification of $\mathcal S_{a}$ with $\Dsys\setminus R$, we may view $\Pi_{sm}$ as a subvariety of $\mathcal S$.  Letting $\iota:\Pi_{sm}\hookrightarrow \mathcal S_0$ denote this inclusion, it follows that $Y_{sm}\to \Pi_{sm}$ is just the pullback of $\mathcal X\to \mathcal S$ via $\iota$.  Moreover, we an isomorphism of local systems $R^2 g_\ast \Z(1)/(\text{torsion}) \simeq \iota^\ast \mathbb H$ on $\Pi_{sm}(\C)$.  Thus if we choose the specific base point $b\in \Pi_{sm}$ such that $Y_b\simeq \mathcal X_{\iota(b)}\simeq X_1$ (where $X_1$ is the surface from Theorem \ref{thmA}), then the monodromy representation of $R^2 g_\ast \Z(1)/(\text{torsion})$ (which is essentially the representation $\rho$ from (\ref{orig_pencil_mon}), but with $\Z$-coefficients) factors through $\iota_\ast$, i.e., factors as
\[
\pi_1(\Pi_{sm}(\C),b) \stackrel{\iota_\ast}{\To} \pi_1(\mathcal S(\C),\iota(b)) \stackrel{r}{\To} \Aut(\mathbb V_{\iota(b)}) \cap \orth(\mathbb V_{{\iota(b)},\Q},\psi_{\iota(b)}).
\]
Now we note by Proposition \ref{orthog_decomp} that $\mathbb I_{\iota(b),\Q}\simeq I$ and $\mathbb V_{\iota(b),\Q}\simeq V^\ast$.  Hence in order to prove Theorem \ref{thmB}, it will suffice to show the following:

\begin{proposition}\label{pencil_mon_prop}
Consider the local system $R^2 g_\ast \Q(1)$ over $\Pi_{sm}(\C)$ obtained from the one-dimensional family $g:Y_{sm}\to \Pi_{sm}$.  The image of the associated monodromy representation
\begin{equation}\label{pencil_mon_map}
\pi_1(\Pi_{sm}(\C),b) \to \orth(V^\ast,\psi_b)
\end{equation}
is Zariski-dense in $\orth(V^\ast,\psi_b)$.
\end{proposition}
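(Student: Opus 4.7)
The plan is to apply Picard--Lefschetz theory to the pencil $g\colon Y_{sm}\to\Pi_{sm}$ from \S\ref{PL_subsec}. Two properties of the monodromy on $V^\ast$ will combine with a standard big-monodromy theorem for Lefschetz pencils (in the spirit of Deligne's arguments in Weil II \S4, or Beauville's criterion for orthogonal Lefschetz pencils) to yield Zariski-density: (i) the $32$ vanishing cycles $\delta_1,\ldots,\delta_{32}$ form a single orbit (up to sign) under the monodromy representation $\rho$ in (\ref{orig_pencil_mon}), and (ii) $V^\ast$ is an irreducible $\pi_1(\Pi_{sm}(\C),b)$-module. Note that the absence of nonzero monodromy invariants in $V^\ast$ is already guaranteed by Proposition \ref{orthog_decomp}, which in turn relies on Theorem \ref{thmA} through Proposition \ref{dim_I}.

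For (i), the decisive input is Proposition \ref{pencil_exists}: the singular locus $R\subseteq|\mathscr D|$ has a unique irreducible codimension-one component $R_0$, a hypersurface of degree $32$, which our pencil $\Pi$ meets transversely at $32$ smooth points of $R_0$. Irreducibility of $R_0$ implies that the natural monodromy action of $\pi_1(|\mathscr D|\setminus R_0)$ on the set of local vanishing cycles attached to smooth points of $R_0$ is transitive (up to sign). A Zariski--Lefschetz theorem for complements of hypersurfaces then asserts that, since $\Pi$ is a general line in $|\mathscr D|\simeq\PP^6$, the map $\pi_1(\Pi_{sm}(\C),b)\to\pi_1(|\mathscr D|\setminus R_0,b)$ is surjective. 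Chasing the transitivity through this surjection yields conjugacy of the $\delta_i$ within the monodromy of $\Pi$ itself. Given (i), property (ii) follows directly from the Picard--Lefschetz formula (\ref{PL_form}): any $\pi_1$-invariant subspace $U\subseteq V^\ast$ must satisfy, for each $i$, either $\delta_i\in U$ or $U\subseteq\delta_i^\perp$; conjugacy forces a uniform choice, and since the $\delta_i$ span $V^\ast$, we conclude $U=0$ or $U=V^\ast$.

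With (i) and (ii) in hand, the Zariski closure $G$ of the monodromy is an algebraic subgroup of $\orth(V^\ast,\psi_b)$ acting irreducibly and generated by reflections lying in a single $G$-orbit. A structure theorem for such subgroups (used, e.g., in the framework of \cite{Lyo}) then shows that $G$ is either finite or equal to all of $\orth(V^\ast,\psi_b)$. The finite case is excluded by Hodge theory: a finite monodromy group would force isotriviality of the polarized variation of Hodge structure on $\mathbb V^\ast$ after a finite \'etale base change, contradicting the nontrivial vanishing-cycle degenerations of $Y_{sm}\to\Pi_{sm}$ at the $32$ critical values (any such degeneration obstructs triviality of the VHS on an affine curve of K3-type).

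The main difficulty I anticipate lies in step (i): making the descent of transitivity from $\pi_1(|\mathscr D|\setminus R_0)$ down to $\pi_1(\Pi_{sm}(\C),b)$ fully rigorous, particularly in justifying the surjectivity on fundamental groups for our specific pencil (or, failing a clean application of Zariski--Lefschetz, in producing the needed conjugating loops by a direct deformation-of-pencils argument). Once this is under control, the remaining pieces---deducing irreducibility via Picard--Lefschetz, applying the structure theorem, and ruling out finite monodromy by Hodge-theoretic means---proceed along well-established lines.
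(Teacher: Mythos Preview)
Your approach is essentially the same as the paper's: both use the irreducibility of $R_0$ (Proposition \ref{pencil_exists}) together with a Zariski--Lefschetz surjection on fundamental groups to obtain conjugacy of the vanishing cycles, and then apply a reflection-group criterion (Deligne's Lemme $4.4.2^s$ in \cite{Del2}, packaged in the paper as Lemma \ref{group_lemma}) to reduce to the dichotomy ``finite or Zariski-dense.'' Your explicit step (ii) establishing irreducibility is not isolated as a separate statement in the paper, but it is exactly what makes the paper's final signature argument go through.

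The one genuine gap is in your exclusion of the finite case. You argue that finite monodromy forces isotriviality of the polarized VHS after a finite \'etale cover, and that this is incompatible with the nodal degenerations. But for an ordinary double point on a \emph{surface}, the Picard--Lefschetz transformation (\ref{PL_form}) is a reflection of order $2$: it is semisimple, and the limit mixed Hodge structure at such a puncture is pure. Thus finite local monodromy is perfectly consistent with finite global monodromy, and neither the degenerations nor the limit MHS obstruct the pulled-back VHS from being constant. The paper instead rules out finiteness via the signature of $\psi_b$: on $V^\ast$ the cup product has indefinite signature $(2,8)$, whereas a finite group also admits an invariant positive definite form; combined with your irreducibility step (ii), these two invariant forms would have to be proportional, forcing $\psi_b$ to be definite---a contradiction. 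Replace your isotriviality argument with this signature argument and the proof is complete.
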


To prove this, we will need a group-theoretic result. The following statement is a simple combination of a result of Deligne and a technique from Lefschetz's study of the monodromy of hyperplane sections:

\begin{lemma}\label{group_lemma}
Let $G$ be a group with finite generating set $\set{g_i}$ and let $\pi:G\to \orth(V,\psi)$ be a finite-dimensional complex representation of $G$, polarized by a nondegenerate symmetric bilinear form $\psi$.  Suppose that:
\begin{enumerate}
\item[(a)] For each $i$ there is an element $v_i\in V$ such that $\psi(v_i,v_i)=2$, and $\pi(g_i)$ is a reflection
\begin{equation}\label{reflec}
\pi(g_i)\colon x\mapsto x-\psi(x,v_i)v_i
\end{equation}
through the hyperplane orthogonal to $v_i$.  Furthermore, the collection $\set{v_i}$ spans $V$.
\item[(b)] The representation $\pi$ factors through a quotient group $H$ such that the images of all $g_i$ are conjugate in $H$.
\end{enumerate}
Then the image of $\pi$ is either finite or Zariski-dense in $\orth(V,\psi)$.
\end{lemma}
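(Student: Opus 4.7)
Set $\Gamma := \pi(G) \subseteq \orth(V, \psi)$ and let $\bar\Gamma$ denote its Zariski closure. The plan is to first use the conjugacy hypothesis (b), in the style of Lefschetz's treatment of the monodromy of hyperplane sections, to show that $V$ is an irreducible $\bar\Gamma$-module, and then to appeal to a theorem of Deligne to force the desired dichotomy.

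For the irreducibility step I would proceed as follows. Take a $\bar\Gamma$-invariant subspace $U \subseteq V$. Because $\bar\Gamma \subseteq \orth(V, \psi)$ preserves $\psi$, the orthogonal complement $U^\perp$ is also $\bar\Gamma$-invariant and $V = U \oplus U^\perp$. The reflection formula (\ref{reflec}) shows that the requirement $r_{v_i}(U) = U$ forces either $v_i \in U$ or $v_i \in U^\perp$ for each $i$. This is where hypothesis (b) enters: since $\pi$ factors through $H$ and the images of the $g_i$ in $H$ are pairwise conjugate, the reflections $r_{v_i}$ form a single conjugacy class in $\Gamma$, hence in $\bar\Gamma$. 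From $h r_{v_i} h^{-1} = r_{v_j}$ for some $h \in \bar\Gamma$ one deduces $r_{h(v_i)} = r_{v_j}$, and hence $h(v_i) = \pm v_j$; since $h$ preserves the decomposition $V = U \oplus U^\perp$, the vector $v_j$ lies in the same summand as $v_i$. Consequently all the $v_i$ lie in a common summand, and the spanning property in (a) yields $U = V$ or $U = 0$.

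Next, I would assume $\Gamma$ is infinite, so $\bar\Gamma$ has positive dimension, and invoke a theorem of Deligne (proved en route to Theorem 4.4.1 of \emph{La conjecture de Weil II}, and essentially present in SGA 7 expos\'e XVIII \S 6): any subgroup of $\orth(V, \psi)$ that acts irreducibly on $V$ and is generated by orthogonal reflections forming a single conjugacy class has Zariski closure equal either to a finite group or to all of $\orth(V, \psi)$. Applied to $\Gamma$, which satisfies both hypotheses by the preceding paragraph together with (b), this forces $\bar\Gamma = \orth(V, \psi)$.

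The main obstacle is the invocation of Deligne's classification, as the irreducibility step is essentially a formal consequence of (a) and (b). Deligne's argument itself proceeds by analyzing the Lie algebra of the identity component $\bar\Gamma^0$: one first shows that the subspace of $\bar\Gamma^0$-fixed vectors is zero (otherwise $\bar\Gamma$-invariance and the irreducibility established above would make $\bar\Gamma^0$ act trivially on $V$, contradicting positive dimensionality), and then uses that $\Lie(\bar\Gamma^0)$ is normalized by the reflections $r_{v_i}$, whose associated vectors form a single orbit spanning $V$, to conclude that $\Lie(\bar\Gamma^0)$ must be all of $\mathfrak{so}(V, \psi)$.
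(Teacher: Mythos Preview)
Your argument is essentially correct and matches the paper's approach: both use the conjugacy hypothesis (b) to deduce that some $h$ in the image of $\pi$ satisfies $h(v_i)=\pm v_j$ (the paper frames this directly as a single-orbit statement for the $v_i$ rather than as irreducibility, but the computation is identical), and then invoke Deligne's Lemme~$4.4.2^s$ from \emph{La conjecture de Weil~II}. One minor slip worth fixing: the decomposition $V=U\oplus U^\perp$ need not hold over $\C$ for an arbitrary invariant subspace $U$, since $\psi|_U$ may be degenerate; but this is inessential to your argument, which only uses that $h$ preserves $U$ and $U^\perp$ separately, after which ``all $v_i\in U$'' gives $U=V$ by spanning and ``all $v_i\in U^\perp$'' gives $U\subseteq V^\perp=0$ by nondegeneracy of $\psi$.
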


\begin{proof}
Fixing some $i$, we will first show that the orbit of $v_i$ under $G$ contains $\pm v_j$ for any other $j$, by using precisely the same argument as in \cite[\S7.6]{Lamot}.  First, by (b) we may factor $\pi:G\to \orth(V,\psi)$ as $G \stackrel{q}{\twoheadrightarrow} H \stackrel{\pi'}{\to} \orth(V,\psi)$, and we have
\[
q(g_i) = h^{-1} q(g_j) h
\]
for some $h\in H$.  Thus $\pi'(h)\pi(g_i) = \pi(g_j)\pi'(h)$.  Using (\ref{reflec}), this translates into
\begin{equation}\label{insig}
\psi(x,v_i)\pi'(h)v_i = \psi(\pi'(h)x,v_j)v_j
\end{equation}
for any $x$ in $V$.  As $\psi$ is nondegenerate on $V$, we may choose $x$ so that $\psi(x,v_i)\neq 0$, in which case (\ref{insig}) gives $\pi'(h) v_i = c v_j$ for some scalar $c$.  Using (\ref{insig}) again, one has
\begin{eqnarray*}
\psi(\pi'(h)x,v_j)v_j &=& \psi(x,v_i)\pi'(h)v_i \\
&=& \psi(\pi'(h)x,\pi'(h)v_i))\pi'(h)v_i \\
&=& c^2 \psi(\pi'(h)x,v_j)v_j,
\end{eqnarray*}
which implies $c=\pm 1$.  Thus there exists $g\in G$ such that $\pi(g)v_i = \pi'(h)v_i = \pm v_j$.

After perhaps replacing some $v_j$ by $-v_j$, we may assume that all elements in $\set{v_i}$ belong to a single orbit $\mathcal O$ under $G$, with each $v\in \mathcal O$ satisfying $\psi(v,v)=2$.  Moreover, the image of $\pi$ is a subgroup of $\orth(V,\psi)$ containing all reflections $x\mapsto x-\psi(x,v)v$, with $v\in \mathcal O$.  Indeed, if $\pi(g)v_i=w$, then the reflection $x\mapsto x-\psi(x,v)v$ is equal to $\pi(gg_ig^{-1})$ by (\ref{reflec}).  Now we may apply \cite[Lemme $4.4.2^s$]{Del2} to conclude that the image of $\pi$ is either finite or Zariski-dense in $\orth(V,\psi)$.
\end{proof}

\noindent\textit{Proof of Proposition \ref{pencil_mon_prop}}.  Corresponding to the inclusions $\Pi_{sm}\hookrightarrow \mathcal S_a \hookrightarrow \mathcal S$, the monodromy representation (\ref{pencil_mon_map}) factors as
\[
\pi_1(\Pi_{sm}(\C),b) \to \pi_1(\mathcal S_a(\C),b) \to \orth(V^\ast,\psi_b).
\]
But $\mathcal S_a$ may be identified with the open subset $\Dsys\setminus R$ of smooth elements in the linear system $\Dsys$ on $\symEone$.  Recall by Proposition \ref{pencil_exists} that $R_0$ has codimension one in $\Dsys$, with exactly one irreducible codimension one component $R_0\subseteq R$.  By \cite[\S7.4, 7.5]{Lamot}, we know that (i) the homomorphism $\pi_1(\Pi_{sm},b) \to \pi_1(\Dsys\setminus R_0,b)$ is surjective and (ii) the images in $\pi_1(\Dsys\setminus R_0,b)$ of the 32 homotopy classes $\gamma_i\in \pi_1(\Pi_{sm},b)$ are all conjugate.  Since the complex codimension of $R\setminus R_0$ in $\Dsys$ is at least two, the map $\pi_1(\mathcal S_a,b)\simeq \pi_1(\Dsys\setminus R,b)\to \pi_1(\Dsys \setminus R_0,b)$ is an isomorphism (see \cite[Prop.\ 4.1.1]{Dimca}), and therefore the statements (i) and (ii) hold with $\pi_1(\mathcal S_a,b)$ in place of $\pi_1(\Dsys\setminus R_0,b)$.

Thus if we extend scalars to $\C$ and replace the cup product form $\psi_b$ with $-\psi_b$, then the preceding paragraph and the Picard-Lefschetz theory of \S\ref{PL_subsec} allow us to apply Lemma \ref{group_lemma} to conclude that the image of the monodromy representation (\ref{pencil_mon_map}) is either finite or Zariski-dense in $\orth(V^\ast_\C,\psi_b)$.  But we may rule out the first possibility, since the cup product form $\psi_b$ polarizes the representation on $V^\ast$ and has indefinite signature $(2,8)$.  Hence the image is dense in $\orth(V^\ast_\C,\psi_b)$, and therefore also dense in $\orth(V^\ast,\psi_b)$. \qed

\section{Applications to $\ell$-adic Cohomology}\label{tate_sec}

We are now in a position to prove Theorem \ref{thmC}, which we do using the axiomatic framework for polarized families of $p_g=1$ surfaces laid out in \S5 of \cite{Lyo}.  This framework is based closely upon a similar one due to Andr\'e \cite{Andre} (which in turn grows out of ideas of Deligne \cite{Del-K3} and \cite{Rapo}), but differs in an important way.  Andr\'{e} requires that the period map (arising from the primitive part of the middle singular cohomology groups) of the family is a submersion at some point.  Such a requirement can be satisfied for K3 surfaces, abelian surfaces, or some $p_g=1$ surfaces of general type \cite{Cat-2,Tod}, but not for the smooth family $f:\mathcal X\to\mathcal S$ of surfaces studied here: The base $\mathcal S$ (which is shown in \cite{Cat} to locally realize the Kuranishi family of the fiber at any point) has dimension 7, but the period domain for the Hodge structures $\mathbb V_s\subseteq H^2(\mathcal X_s,\Q(1))$ has dimension 8.  The framework in \cite{Lyo} relaxes this condition on the period map, and in particular applies to $f:\mathcal X\to\mathcal S$.

\subsection{}
Here is the general setup of this framework.  Let $Y$ be a smooth projective geometrically connected surface over the field $k_0$ such that $p_g=1$.  Then the weight zero integral Hodge structure
\[
H_\Z := H^2(Y_\C,\Z(1))/(\text{tors})
\]
will have Hodge numbers $h^{-1,1}=h^{1,-1}=1$, $h^{0,0}>1$, and $h^{p,q}=0$ otherwise.  We let $\theta$ denote the bilinear form on $H_\Z$ given by the cup product.  Suppose there is a sublattice $\Omega \subseteq H_\Z$ with the following properties:
\begin{itemize}
\item[(i)] There is a finite collection of effective divisors $D_1,\ldots, D_m$ on $Y$ that are defined over $k_0$ and whose cycle classes $[D_1],\ldots,[D_m]$ give a basis of $\Omega$.
\item[(ii)] The divisor $D_1$ is ample.
\end{itemize}
Let $V_\Z=\Omega^\perp \subseteq H_\Z$, which (with the restriction of the cup product form $\theta$) is a polarized integral Hodge structure by (ii).  This gives rise to the rational Hodge structure $V:=V_\Z\otimes_\Z \Q$.

Now let $\pi: \mathcal Y\to\mathcal T$ be a smooth projective family of surfaces defined over $k_0$ such that $\mathcal T$ is smooth and geometrically connected.  We will assume that this family satisfies four axioms.  The first two axioms essentially posit that the properties (i) and (ii) of $Y$ above can be extended to some family:
\begin{itemize}
\item[(A1)] For some point $t\in\mathcal T(k_0)$, we have a $k_0$-isomorphism $\mathcal Y_t\simeq Y$.
\item[(A2)] There exist effective divisors $\mathcal D_1,\ldots,\mathcal D_m$ on $\mathcal Y$ that are flat over $\mathcal T$ and whose pullbacks to $\mathcal Y_t\simeq Y$ are numerically equivalent to the divisors $D_1,\ldots,D_m$.  Moreover, the pullback of $\mathcal D_1$ to every fiber is ample.
\end{itemize}
Referring to (A1), we will abuse notation by also using $t$ to denote the associated point in $\mathcal T(\C)$ arising from our fixed embedding $k_0\hookrightarrow \C$.

Inside $\mathbb H_\Z:= R^2 (\pi_\C)^\ast \Z(1)/(\text{tors})$, which is an integral variation of Hodge structure on $\mathcal T_\C$, the cycle classes of the divisors $\mathcal D_1,\ldots, \mathcal D_m$ give rise to a constant subvariation.  If we take the orthogonal complement of this subvariation with respect to the cup product $\phi$ on $\mathbb H_\Z$, we obtain a polarized integral variation of Hodge structure that we may denote as $\mathbb V_\Z$.  Note by (A1) and (A2) that the isomorphism $\mathcal Y_t\simeq Y$ induces an isomorphism of Hodge structures $\mathbb H_{\Z,t}\simeq H_\Z$ and an isomorphism of polarized Hodge structures $(\mathbb V_{\Z,t},\phi_t)\simeq (V_\Z,\theta)$.  Let $\mathbb V:= \mathbb V_\Z \otimes_\Z \Q$.  The remaining two axioms replace the condition in \cite{Andre} on the period map of the family $\pi:\mathcal Y\to\mathcal T$ (and in particular are implied by such a condition):
\begin{itemize}
\item[(A3)] There exists $u\in\mathcal T(\C)$ such that the Hodge structure $\mathbb V_u$ contains nontrivial algebraic classes.

\item[(A4)] The image of the monodromy representation
\[
r: \pi_1(\mathcal T(\C),t)\to \orth(\mathbb V_t,\phi_t)\simeq \orth(V,\theta)
\]
contains a Zariski-dense index subgroup of $\text{SO}(V,\theta)$.
\end{itemize}
As noted in the introduction, these axioms may be used to show that the correspondence between the surface $Y$ and its Kuga-Satake variety (which need only be considered up to isogeny) is motivated.  This is done in \S5 of \cite{Lyo}, the culmination of which is the following:

\begin{theorem}\label{abstract-thmC}
Assuming the axioms (A1) through (A4), the following hold:
\begin{enumerate}
\item The $\ell$-adic representation of  $\Gal(k/k_0)$ acting upon $H^2(Y_k,\Q_\ell(1))$ is semisimple.
\item (Tate Conjecture) Any class of $H^2(Y_k,\Q_\ell(1))$ fixed by an open subgroup of $\Gal(k/k_0)$ is a $\Q_\ell$-linear combination of classes arising from divisors on $Y_k$.
\end{enumerate}
\end{theorem}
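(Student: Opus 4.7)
The plan is to apply the axiomatic machinery developed in \S5 of \cite{Lyo}, which adapts the Deligne \cite{Del-K3}/Andr\'e \cite{Andre} strategy for K3 surfaces so as not to require the period map of the family to be a submersion anywhere. The central goal is to show that the Kuga-Satake correspondence between the fiber $Y$ and its associated abelian variety $KS(Y)$ is motivated in the sense of Andr\'e \cite{Andre-Mot} (equivalently, absolute Hodge in the sense of \cite{DMOS}); once this is known, both conclusions (i) and (ii) will follow from the theorems of Faltings \cite{Fal} applied to $KS(Y)$.

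First I would recall the Kuga-Satake construction: to the polarized weight-zero Hodge structure $(V,\theta)$ one attaches, via the even Clifford algebra, a complex abelian variety $KS(Y)$ together with an embedding of Hodge structures $V\hookrightarrow \End(H^1(KS(Y),\Q))$. This embedding is an absolute Hodge cycle on $Y\times KS(Y)\times KS(Y)$, and the key step is to promote it to a motivated cycle. This is done by working in the family $\pi\colon\mathcal Y\to\mathcal T$: axiom (A3) supplies a point $u\in\mathcal T(\C)$ at which $\mathbb V_u$ acquires extra algebraic classes, forcing the Mumford-Tate group at $u$ to be sufficiently small that the Kuga-Satake class there can be expressed in terms of classical motivated cycles on a CM or otherwise decomposable abelian variety. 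Axiom (A4), combined with the general principle that the Mumford-Tate group at a very general point contains the identity component of the geometric monodromy group, then identifies the generic Mumford-Tate group with $\SO(V,\theta)$, so that the Kuga-Satake class is the unique Hodge class of its type in the variation $\mathbb V$. One then propagates motivatedness from $u$ to every other fiber through the Deligne-Andr\'e principle that motivated cycles form a locally constant system along flat families. Axioms (A1) and (A2) ensure that $Y$ itself sits in this family as the fiber over $t$ and that the polarization is globally available, so the motivatedness obtained generically descends to $Y$.

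Once the Kuga-Satake class is motivated, its $\ell$-adic realization is $\Gal(k/k_0)$-equivariant after a finite base extension over which $KS(Y)$ is defined, producing a Galois-equivariant embedding of $V\otimes_\Q\Q_\ell$ into $\End(H^1(KS(Y)_k,\Q_\ell))$. Faltings showed both the semisimplicity of the Galois action on $H^1$ of an abelian variety over a finitely generated field of characteristic zero and the Tate conjecture for endomorphisms of such a variety. Semisimplicity transfers to $V\otimes_\Q\Q_\ell$ as a Galois-stable subquotient and hence, together with the obvious semisimplicity of the action on the algebraic $\Omega$-part, to all of $H^2(Y_k,\Q_\ell(1))$, giving (i). For (ii), a Galois-invariant class in $H^2(Y_k,\Q_\ell(1))$ decomposes into its $\Omega$-component (algebraic by hypothesis (i) of the framework) and a component in $V\otimes_\Q\Q_\ell$; the latter corresponds via the Kuga-Satake embedding to a Galois-invariant endomorphism of $H^1(KS(Y)_k,\Q_\ell)$, which by Faltings comes from a genuine endomorphism of $KS(Y)$, and unwinding the construction produces a $\Q_\ell$-combination of divisor classes on $Y_k$.

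The main obstacle is the motivatedness step. Everything downstream of it (the Kuga-Satake construction itself, the passage to $\ell$-adic realizations, and the invocation of Faltings) is by now quite standard; the genuinely delicate point is that the classical Deligne-Andr\'e argument assumes a submersive period map, which fails in the present setting because $\dim\mathcal S = 7$ while the period domain for $\mathbb V$ has dimension $8$. The combination of axioms (A3) and (A4) is precisely the substitute that makes the argument work without this submersion hypothesis, and verifying these axioms for the family $f\colon\mathcal X\to\mathcal S$ (done via Proposition~\ref{nongen_picard} for (A3) and Theorem~\ref{thmB} for (A4)) is where the real content of our proof of Theorem~\ref{thmC} lies.
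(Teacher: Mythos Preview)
Your proposal is essentially correct and matches the approach the paper describes. Note, however, that the paper does not actually give a proof of this theorem: it is stated as a black-box result imported from \S5 of \cite{Lyo}, with the surrounding text (and the introduction) summarizing the strategy exactly as you do---Kuga--Satake construction, motivatedness of the Kuga--Satake class established in the family via (A3) and (A4), then Faltings' theorems on abelian varieties to conclude (i) and (ii). Your sketch is a faithful outline of that argument, so there is nothing to correct.
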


\begin{remark}\label{no-a3}
In view of the result of Green and Oguiso cited in Remark \ref{period-map}, (A3) will follow from (A4) as long as $h^{0,0}(V)=h^{0,0}(\mathbb V)>0$, since the variation $\mathbb V$ will be nontrivial.  On the other hand, when $h^{0,0}(V)=0$ then all fibers of $\mathbb V$ are CM Hodge structures, and so $\mathbb V$ is a trivial variation.  But in that case the Tate Conjecture is immediate for $Y$, since one would know that $\rho(Y)=h^{1,1}(Y)$. 
\end{remark}

\subsection{}

We will now prove Theorem \ref{thmC}.  First we note that both statements will be true over $k_0$ if they are true over finite extension of $k_0$.  Hence at various points in the argument below (in particular, when noting that the axioms (A1) through (A4) hold) we may replace $k_0$ by a finite extension when convenient.

Let $X$ be a surface with $p_g=q=1$, $K^2=2$, and $K$ ample.  Let $D_1:=K$ be the canonical divisor of $X$ and let $D_2$ be the class of an Albanese fiber.  By Corollary \ref{family}, $X$ is isomorphic over $k_0$ to a fiber $\mathcal X_s$ of the family $f:\mathcal X\to\mathcal S$ for some $s\in\mathcal S(k_0)$.  It follows from the same corollary that the divisors $D_1$, $D_2$ on $X$ arise as the restriction of divisors on $\mathcal X$ that are flat over $\mathcal S$, and also that the first of these global divisors restricts to an ample class on every fiber.  Axiom (A4) is immediate from Theorem \ref{thmB}, and axiom (A3) follows from Proposition \ref{nongen_picard} or Remark \ref{no-a3}.  This completes the proof.


\section*{Acknowledgements}

{The first author was partially supported by NSF grant DMS-0602191 as a participant in the 2011 REU at the University of Michigan.  A portion of the second author's work was supported by NSF grant DMS-0943832. We would like to thank Noam Elkies for help with calculations, Tom Graber and Dinakar Ramakrishnan for helpful discussions, and an anonymous referee for their insights and suggestions.}

\bibliography{BCH_surfaces-new}
\bibliographystyle{math}

\end{document}